\newcommand{\colvec}[2][.8]{%
  \scalebox{#1}{%
    \renewcommand{\arraystretch}{.8}%
    $\begin{pmatrix}#2\end{pmatrix}$%
  }
}
\newcommand{\lift}[2]{%
\setlength{\unitlength}{1pt}
\begin{picture}(0,0)(0,0)
\put(0,{#1}){\makebox(0,0)[b]{${#2}$}}
\end{picture}
}
\newcommand{\lowerarrow}[1]{%
\setlength{\unitlength}{0.03\DiagramCellWidth}
\begin{picture}(0,0)(0,0)
\qbezier(-28,-4)(0,-18)(28,-4)
\put(0,-14){\makebox(0,0)[t]{$\scriptstyle {#1}$}}
\put(28.6,-3.7){\vector(2,1){0}}
\end{picture}
}
\newcommand{\upperarrow}[1]{%
\setlength{\unitlength}{0.03\DiagramCellWidth}
\begin{picture}(0,0)(0,0)
\qbezier(-28,12)(0,25)(28,12)
\put(0,20){\makebox(0,0)[b]{$\scriptstyle {#1}$}}
\put(28.6,11.7){\vector(2,-1){0}}
\end{picture}
}
\newcommand{\strarrow}[1]{%
\setlength{\unitlength}{0.03\DiagramCellWidth}
\begin{picture}(0,0)(0,0)
\qbezier(-22,5)(0,5)(22,5)
\put(0,6){\makebox(0,0)[b]{$\scriptstyle {#1}$}}
\put(-22,4.8){\vector(-2,0){0}}
\end{picture}
}
\newcommand{\Hcal}{\mathcal{H}}
\newcommand{\ok}{\ov{k}}
\newcommand{\oc}{\ov{c}}
\newcommand{\co}{\colon\thinspace}
\newcommand{\CFDT}{\mathrm{CFD}}
\newcommand{\CT}{\mathrm{C}}
\newcommand{\KT}{\mathrm{K}}
\newcommand{\pphi}{\mathfrak{f}}
\newcommand{\Ff}{\mathfrak{f}}
\newcommand{\pphibar}{\overline{\pphi}}
\newcommand{\Pbb}{\mathbb{P}}
\newcommand{\ovl}{\overline}
\newcommand{\Dd}{\mathfrak{D}}
\newcommand{\Ht}{\mathrm{H}}
\newcommand{\HFT}{\mathrm{HF}}
\newcommand{\CFT}{\mathrm{CF}}
\newcommand{\HFKT}{\mathrm{HFK}}
\newcommand{\rank}{\mathrm{rnk}}
\newcommand{\taub}{\Xi}
\newcommand{\CFKT}{\mathrm{CFK}}
\newcommand{\ov}{\widehat}
\newcommand{\Sig}{\Sigma}
\newcommand{\ra}{\rightarrow}
\newcommand\Dual{\mathcal D}
\newcommand\Duality\Dual
\newcommand\RelSpinC{\underline{\SpinC}}
\newcommand\relspinc{s}
\newcommand\x{\mathbf x}
\newcommand\z{\mathbf z}
\newcommand\y{\mathbf y}
\newcommand\ModSphere{\ModFlow\left({\mathbb S}\longrightarrow
\Sym^{g-1}(\Sigma_{1})\times \Sym^2(\Sigma_{2})\right)}
\newcommand\ModSpheres\ModSphere
\newcommand\UnparModSp{\widehat \ModSp}
\newcommand\UnparModFlow\UnparModSp
\newcommand\spin{\mathfrak s}
\newcommand{\spinc}{\mathfrak s}
\newcommand{\spinct}{\mathfrak t}
\newcommand\ModMaps{\mathcal M}
\newcommand\ModSp\ModMaps
\newcommand\Ta{{\mathbb T}_{\alpha}}
\newcommand\Tb{{\mathbb T}_{\beta}}
\newcommand\alphas{\mbox{\boldmath$\alpha$}}
\newcommand\betas{\mbox{\boldmath$\beta$}}
\newcommand\spincrel\relspinc
\newtheorem{thm}{Theorem}[section]
\newtheorem{prop}[thm]{Proposition}
\newtheorem{cor}[thm]{Corollary}
\newtheorem{conj}[thm]{Conjecture}
\newtheorem{lem}[thm]{Lemma}
\newtheorem{defn}[thm]{Definition}
\def\endproof{\relax\ifmmode\expandafter\endproofmath\else
  \unskip\nobreak\hfil\penalty50\hskip.75em\hbox{}\nobreak\hfil\bull
  {\parfillskip=0pt \finalhyphendemerits=0 \bigbreak}\fi}
\def\endproofmath$${\eqno\bull$$\bigbreak}
\def\bull{\vbox{\hrule\hbox{\vrule\kern3pt\vbox{\kern6pt}\kern3pt\vrule}\hrule}}
\newcommand{\Z}{\mathbb{Z}}
\newcommand{\Ker}{\mathrm{Ker}}
\newcommand{\Coker}{\mathrm{Coker}}
\newcommand{\Image}{\mathrm{Im}}
\newcommand{\ModSWfour}{\mathcal{M}}
\newcommand{\ModFlow}{\ModSWfour}
\newcommand{\SpinC}{{\mathrm{Spin}}^c}
\newcommand\abuts\Rightarrow
\newcommand\Sym{\mathrm{Sym}}
\newcommand{\Hbb}{{\mathbb{H}}}
\newcommand{\Fbb}{\mathbb{F}}
\newcommand{\lra}{\longrightarrow}
\newcommand{\Mod}{\mathcal{M}}
\begin{document}

\title[Bordered Floer homology and incompressible tori]
{Bordered Floer homology and existence of incompressible tori in homology spheres}%
\author{Eaman Eftekhary}%
\address{School of Mathematics, Institute for Research in Fundamental Sciences (IPM),
P. O. Box 19395-5746, Tehran, Iran}%
\email{eaman@ipm.ir}%

\thanks{}%
%\subjclass{}%
\keywords{Floer homology, splicing, incompressible torus}%

%\date{December 2005}%
%\dedicatory{}%
%\commby{}%
\maketitle
% ----------------------------------------------------------------
\begin{abstract}
Let $K$ denote a knot inside the homology sphere $Y$. 
The zero-framed longitude of $K$ gives the complement of $K$ 
in $Y$ the structure of a bordered three-manifold, which may be denoted by 
$Y(K)$. We compute the bordered Floer 
complex $\ov{\mathrm{CFD}}(Y(K))$ in terms of the knot Floer complex 
associated with $K$. As a corollary,
we show that if a homology sphere has the same Heegaard Floer homology
as $S^3$  it does not contain any incompressible tori. 
Consequently, if $Y$ is an irreducible  homology sphere $L$-space then $Y$ is either 
$S^3$, or  the Poicar\'e sphere $\Sig(2,3,5)$, or it is hyperbolic.
\end{abstract}
%\tableofcontents
\section{Introduction}\label{sec:introduction}
\subsection{Background and the main results }
Heegaard Floer homology, defined by  Ozsv\'ath and Szab\'o \cite{OS-3mfld},
has been  powerful in extracting topological properties of three-manifolds.
Although there is a variety of $L$-spaces- the three-manifolds 
with most simple Heegaard Floer homology-
in rare cases homology spheres have the Heegaard Floer homology of $S^3$.
 The Poincar\'e sphere $\Sig(2,3,5)$ is  an example 
 of an irreducible homology sphere with
$\widehat{\mathrm{HF}}(\Sig(2,3,5))=\widehat{\mathrm{HF}}(S^3)=\Z$.
It is thus not true in general, that Heegaard Floer homology
is capable to distinguish $S^3$ from other homology spheres.
 However, a conjecture of Ozsv\'ath and Szab\'o
predicts that the $\Sig(2,3,5)$ is the only non-trivial example of an irreducible
homology sphere with trivial Heegaard Floer homology. 
In this paper, we will address the case of a $3$-manifold which contains  
an incompressible torus. Let $\Fbb$ denote the field $\Z/2\Z$ %with two elements
throughout this paper.

\begin{thm}\label{thm:main-intro}
If a homology  sphere $Y$
 contains an incompressible torus  $$\widehat{\mathrm{HF}}(Y;\Fbb)\neq \Fbb=\widehat{\mathrm{HF}}(S^3;\Fbb).$$
\end{thm}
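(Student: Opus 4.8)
The plan is to combine the computation of $\ov{\CFDT}(Y(K))$ from the theorem above with the bordered pairing theorem, after first reorganizing $Y$ into two knot complements. Since $Y$ is a homology sphere, $H_2(Y)=0$, so the incompressible torus $T$ separates: $Y=Y_1\cup_T Y_2$. A Mayer--Vietoris computation shows that $(i_{1*},i_{2*})\colon H_1(T)\to H_1(Y_1)\oplus H_1(Y_2)$ is an isomorphism onto $\Z^2$, and together with ``half lives, half dies'' this forces $H_1(Y_i)\cong\Z$ with $H_1(T)\to H_1(Y_i)$ onto. Hence each $Y_i$ is the exterior of a knot $K_i$ in a homology sphere $Y_i'$: fill $Y_i$ along the primitive slope on $T$ generating $H_1(Y_i)$ to obtain $Y_i'$, with $K_i$ the core of the filling solid torus, meridian $\mu_i$, and Seifert (hence rational) longitude $\lambda_i$. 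The same isomorphism shows the gluing $\phi\colon\partial Y_1\to\partial Y_2$ cannot match the two rational longitudes: $\phi(\lambda_1)\neq\pm\lambda_2$, since otherwise $\lambda_1$ would lie in $\ker(i_{1*},i_{2*})$. Incompressibility of $T$ means neither $Y_i$ is a solid torus, so neither $K_i$ bounds a disk in $Y_i'$; by the genus-detection property of knot Floer homology for knots in homology spheres this is reflected in $\rk\,\HFKa(Y_i',K_i)\geq 2$, which is the nontriviality we feed into the estimate. (If $Y$ is reducible this is unnecessary: an incompressible torus can be isotoped, via innermost-disk arguments, into an irreducible prime summand $Y_a$, and then $\HFa(Y)$ has $\HFa(Y_a)$ as a tensor factor, of rank $\geq 3$ by the irreducible case.)

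Now view $Y$ as obtained from $Y_2'$ by excising $\nbd{K_2}$ and regluing $Y_1$ in its place. Parametrizing $T=\partial(Y_2'\setminus\nbd{K_2})$ by the zero framing, the pairing theorem gives
$$\HFa(Y;\Fbb)\ \cong\ H_*\Big(\ov{\mathrm{CFA}}\big(Y_1(K_1),n\big)\,\boxtimes\,\ov{\CFDT}\big(Y_2(K_2)\big)\Big),$$
where $\ov{\CFDT}(Y_2(K_2))$ is precisely the module the theorem above computes out of $\CFKinf(Y_2',K_2)$, and $n$ is the framing on the $Y_1$--side induced by $\phi$. Here $n$ is a finite rational (not $\infty$) exactly because $\phi(\lambda_1)\neq\pm\lambda_2$; if $n$ were $\infty$ the glued manifold would fail to be a homology sphere. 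Symmetrically one may interpose the bimodule $M_\phi$ of the mapping class $\phi$ and write $\HFa(Y)\cong H_*\big(\ov{\CFDT}(Y_1(K_1))\boxtimes M_\phi\boxtimes\ov{\mathrm{CFA}}(Y_2(K_2))\big)$, so that $\HFa(Y)$ is realized as an explicit complex built from $\CFKinf(Y_1',K_1)$, $\CFKinf(Y_2',K_2)$ and the slope $\phi$; both type-$D$ modules, and hence the type-$A$ modules appearing, are supplied by the theorem above.

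It then remains to show this homology has rank at least $2$. From the theorem above, the reduced form of $\ov{\CFDT}(Y_i(K_i))$ consists of one $\iota_0$--generator for each element of a reduced basis of $\HFKa(Y_i',K_i)$, ``vertical'' and ``horizontal'' chains of $\iota_1$--generators recording the two filtered differentials of $\CFKinf(Y_i',K_i)$, and a single \emph{unstable} chain whose length is governed by the framing together with $\tau(Y_i',K_i)$. The plan is to exhibit two homology classes in the box tensor product that no differential can kill: one arising from the interaction of the unstable chain of $\ov{\CFDT}(Y_2(K_2))$ with the unstable part of $\ov{\mathrm{CFA}}(Y_1(K_1),n)$ --- which yields a nondegenerate contribution precisely because $n$ is finite --- and a second, independent one arising from a vertical or horizontal chain, which is nonempty since $\rk\,\HFKa(Y_i',K_i)\geq 2$. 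An equivalent route is the surgery exact triangle for the three regluings of $Y_1$ to $Y_2$ obtained by twisting $\phi$ by powers of a Dehn twist along $T$: for a suitable choice two of these are standard (one being a connected sum $Y_1'\#Y_2'$, whose Floer homology is known), and if $\HFa(Y)=\Fbb$ the triangle together with the grading and $d$--invariant constraints of a homology sphere gives a contradiction. \textbf{The main obstacle} is exactly this rank estimate: proving it uniformly over all $\tau(Y_1',K_1),\tau(Y_2',K_2)$ and all slopes $\phi$ --- in particular over the borderline cases where both unstable chains are as short as possible --- which is what the explicit bookkeeping of $\ov{\CFDT}$ in the theorem above is designed to make tractable.
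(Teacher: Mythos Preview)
Your first paragraph---the reduction of $Y$ to a splice of two nontrivial knot complements in homology spheres---is essentially correct and matches the paper's Corollary following Theorem~\ref{thm:main}. But from there on what you have written is a plan, not a proof: you explicitly label the rank estimate $\rank\widehat{\mathrm{HF}}(Y)\geq 2$ as ``the main obstacle'' and carry out neither of your two suggested routes (exhibiting two surviving classes in the box tensor product, or a surgery-triangle argument). That estimate is the entire content of the theorem; everything else is packaging.

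There are also some technical confusions. First, once you choose the fillings correctly there is no free framing parameter: taking $\mu$ to be the curve on $T$ that bounds in $U_2$ and $\lambda$ the curve that bounds in $U_1$, the homology-sphere condition forces $\mu\cdot\lambda=\pm1$, and filling $U_i$ along the \emph{other} side's longitude realizes $Y$ as the standard splice (meridian $\leftrightarrow$ longitude). Your ``primitive slope generating $H_1(Y_i)$'' is not unique, which is where your spurious parameter $n$ comes from. Second, your description of $\widehat{\mathrm{CFD}}(Y_i(K_i))$ via a single ``unstable chain whose length is governed by the framing and $\tau(K_i)$'' is the Lipshitz--Ozsv\'ath--Thurston formula for large or nonzero integer framings; it is \emph{not} the shape of the zero-framed module in Theorem~\ref{thm:BFH}, which is encoded instead by the summands $L(K),M(K)$ and the bypass maps $F_\bullet,\ovl F_\bullet$.

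The paper does not run the argument through the bordered pairing theorem. It uses the splicing formula of \cite{Ef-splicing} (Proposition~\ref{prop:splicing-formula}), which expresses $\rank\widehat{\mathrm{HF}}(Y(K_1,K_2))$ as $k(\mathfrak D)+c(\mathfrak D)$ for an explicit block matrix $\mathfrak D=\mathfrak D(K_1,K_2)$ built from the duality blocks $A_\bullet^i,B_\bullet^i,C_\bullet^i,D_\bullet^i$ of $\tau_\bullet(K_i)$. The role of the surgery/bypass analysis in Sections~\ref{sec:naturality}--\ref{sec:splicing} is to identify $\pphi_\bullet,\pphibar_\bullet$ with maps on mapping cones well enough to prove Corollary~\ref{cor:nilpotent}: the matrices $X_\bullet=B_{*}B_\bullet B_{\circ}$ are nilpotent. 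The proof of Theorem~\ref{thm:main} is then pure linear algebra on $\mathfrak D$: assuming $i(\mathfrak D)=1$, Proposition~\ref{prop:all-special-pairs} forces one knot to be full-rank or the pair to be of type (S-1)/(S-2); Lemma~\ref{lem:special-cases} eliminates (S-1)/(S-2) using the nilpotency of $X_\infty$; and Proposition~\ref{prop:splicing-with-full-rank} combined with Lemma~\ref{lem:X-nilpotent} eliminates the full-rank case. None of this resembles ``find two explicit classes in a box tensor product,'' and the surgery-triangle shortcut you sketch does not work as stated (e.g.\ $Y_1'\#Y_2'$ can itself be an $L$-space homology sphere, so the triangle alone gives no contradiction).
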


Together with Thurston's geometrization conjecture, now a theorem of Perelman  
(see \cite{Thurston, Per}, also \cite{ Morgan-Tian1, Morgan-Tian2}), 
this result reduces the study of Ozsv\'ath-Szab\'o conjecture to the homology spheres
which are either Seifert fibered or hyperbolic.
It may be shown (see \cite{Raif}, also \cite{Ef-Seifert})
that Poincar\'e sphere and the standard sphere are the only Seifert fibered homology spheres
with trivial Heegaard Floer homology. Thus, Ozsv\'ath-Szab\'o conjecture is reduced to the
following.
\begin{conj}\label{conj} 
 If the homology sphere $Y$ is hyperbolic 
 $\widehat{\mathrm{HF}}(Y;\Fbb)\neq \Fbb$.
\end{conj}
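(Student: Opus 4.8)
Conjecture \ref{conj} is a well-known open problem, and what follows is a plan rather than an argument. The first move is purely formal: for a homology sphere $Y$ the equality $\widehat{\mathrm{HF}}(Y;\Fbb)=\Fbb$ holds exactly when $Y$ is an $L$-space, so the statement to be proved is that no hyperbolic homology sphere is an $L$-space. Together with Theorem \ref{thm:main-intro}, which disposes of the toroidal case, and the classification of Seifert fibered homology spheres with trivial Heegaard Floer homology recalled above, this is precisely the case left open after geometrization. The natural framework is the $L$-space conjecture: $Y$ is an $L$-space if and only if $\pi_1(Y)$ admits no left-invariant order, if and only if $Y$ carries no coorientable taut foliation. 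It therefore suffices to prove, for every hyperbolic homology sphere $Y$, either that $Y$ admits a coorientable taut foliation or that $\pi_1(Y)$ is left-orderable, and then to invoke the corresponding (already established) implication.

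The route I would pursue is the taut-foliation one. Given a hyperbolic homology sphere $Y$, the aim is to construct a coorientable taut foliation $\mathcal{F}$; once $\mathcal{F}$ is in hand, Eliashberg--Thurston perturbs it to a weakly symplectically fillable contact structure $\xi$, and the Ozsv\'ath--Szab\'o contact invariant $c(\xi)$ is then nonzero in $\widehat{\mathrm{HF}}(-Y;\Fbb)$ and not annihilated by $U$, forcing $\mathrm{HF}^+_{\mathrm{red}}(Y)\neq 0$, that is, $\widehat{\mathrm{HF}}(Y;\Fbb)\neq\Fbb$. To build $\mathcal{F}$ one would run a sutured-manifold hierarchy in the spirit of Gabai: since $Y$ has no non-separating surface, first pass to a knot $K\subset Y$ whose exterior is irreducible and norm-minimizing for a suitable surface, decompose and foliate the exterior compatibly with $\partial$, arrange the boundary slope of the resulting foliation to be the meridian, and fill. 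Equivalently, one would realize $Y$ as a Dehn filling of a cusped hyperbolic manifold carrying a branched surface whose interval of carried foliated slopes contains the filling slope, following the branched-surface techniques of Gabai, Roberts, Li--Roberts and others.

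The genuine obstacle is exactly this construction: there is no known method producing a coorientable taut foliation on an \emph{arbitrary} hyperbolic homology sphere, and its existence is itself a hard special case of the $L$-space conjecture. The obvious alternatives meet the mirror difficulty. Ordering $\pi_1(Y)$ directly from the holonomy $\pi_1(Y)\to\mathrm{PSL}(2,\C)$ is not known to be possible in general, and the implication ``non-left-orderable $\Rightarrow$ $L$-space'' is itself open. One may also use the present paper's surgery/bordered machinery: many hyperbolic homology spheres arise as $1/n$-surgeries on knots $K\subset S^3$, and the mapping-cone surgery formula together with the structure theory of $L$-space knots shows that such a surgery is an $L$-space only for the unknot or a trefoil, neither of which yields a hyperbolic manifold --- but there are hyperbolic homology spheres not of this form, and the analogous computation for surgeries on knots in more general homology spheres (where one knows $\widehat{\mathrm{CFD}}$) is not under control because the relevant knot complement can be arbitrarily complicated. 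In short, a proof of Conjecture \ref{conj} seems to require either a general taut-foliation (or left-ordering) construction valid on all hyperbolic homology spheres, or a genuinely new Floer-theoretic obstruction to triviality of $\widehat{\mathrm{HF}}$ that detects hyperbolicity directly, for instance through the $\mathrm{PSL}(2,\C)$ or $SU(2)$ character variety and the comparison between Heegaard, monopole and instanton Floer homology.
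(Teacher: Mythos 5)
You have not proved anything here, and to be fair you say so yourself; but it is worth being explicit about how this sits relative to the paper. The statement you were given is stated in the paper as Conjecture~\ref{conj}, not as a theorem: the paper's own contribution is to \emph{reduce} the Ozsv\'ath--Szab\'o conjecture to this hyperbolic case, via Theorem~\ref{thm:main-intro} (the toroidal case) and the known classification of Seifert fibered homology spheres with trivial $\widehat{\mathrm{HF}}$. So there is no proof in the paper to compare your argument against, and your opening reduction (``$\widehat{\mathrm{HF}}(Y;\Fbb)=\Fbb$ iff $Y$ is an $L$-space, and geometrization leaves exactly the hyperbolic case'') is just a restatement of the situation the paper already establishes.

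The genuine gap is therefore the entire content of the conjecture, and your plan does not close it. The conditional part of your outline is sound as far as it goes: a coorientable taut foliation on a homology sphere $Y$ gives, via Eliashberg--Thurston and the nonvanishing and non-$U$-torsion of the contact invariant, $\mathrm{HF}^+_{\mathrm{red}}(Y)\neq 0$ and hence $\widehat{\mathrm{HF}}(Y;\Fbb)\neq\Fbb$ (with the technical caveat that Eliashberg--Thurston requires smoothness hypotheses, and the $C^0$ case needs the later work of Bowden and of Kazez--Roberts). But the step you would need --- producing such a foliation, or a left-order on $\pi_1(Y)$, on an \emph{arbitrary} hyperbolic homology sphere --- is precisely the open problem; Gabai-type hierarchies require a suitable norm-minimizing surface and boundary-slope control that no one knows how to arrange in general, the branched-surface slope-interval results cover only special fillings, and your surgery-formula remark handles only the thin family of homology spheres arising as $1/n$-surgeries on knots in $S^3$. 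As written, then, this is a survey of strategies and obstructions, not an argument, and it cannot be inserted as a proof of Conjecture~\ref{conj}; the honest conclusion is that the conjecture remains open both in the paper and in your proposal.
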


The reduced Khovanov 
homology of a knot $K$ inside the standard three-sphere
is related to the Heegaard Floer homology of the double cover 
of the standard sphere branched over $K$ \cite{OS-branched}.
Ozsv\'ath-Szab\'o conjecture (or Conjecture~\ref{conj}) thus implies that 
the reduced Khovanov homology (and thus Khovanov homology) detects the unknot;
a theorem of Kronheimer and Mrowka \cite{KM}.
The result of this paper reproves a few special cases of the aforementioned theorem.
A knot $K\subset S^3$ is 
$\pi$-\emph{hyperbolic} if $S^3-K$ admits a Riemannian
metric with constant negative curvature which 
becomes singular folding with an angle $\pi$ around $K$.
%In particular, $\pi$-hyperbolic knots and hyperbolic.
\begin{cor}
Suppose that for a knot $K\subset S^3$  one of the following is true:
\begin{itemize}
\item $K$ is not $\pi$-hyperbolic.
\item $K$ is a prime satellite knot.
\end{itemize}
Then  the rank of the reduced Khovanov homology 
$\widetilde{\mathrm{Kh}}(K)$ is greater than $1$.
\end{cor}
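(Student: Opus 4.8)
The plan is to pass to the double branched cover $\Sigma_2(K)$ of $S^3$ along $K$, bound its Heegaard Floer homology from below using Theorem~\ref{thm:main-intro}, and translate this back via the Ozsv\'ath--Szab\'o branched-cover spectral sequence. Recall from \cite{OS-branched} that there is a spectral sequence from $\widetilde{\mathrm{Kh}}(\overline K;\Fbb)$ converging to $\widehat{\mathrm{HF}}(\Sigma_2(K);\Fbb)$, and that mirroring preserves the total rank of reduced Khovanov homology over a field; together with the general bound $\mathrm{rk}_{\Fbb}\,\widehat{\mathrm{HF}}(W;\Fbb)\geq|H_1(W;\Z)|$ for a rational homology sphere $W$, this gives
\[
\mathrm{rk}_{\Fbb}\,\widetilde{\mathrm{Kh}}(K)\ \geq\ \mathrm{rk}_{\Fbb}\,\widehat{\mathrm{HF}}(\Sigma_2(K);\Fbb)\ \geq\ |H_1(\Sigma_2(K);\Z)|\ =\ |\det K|.
\]
If $|\det K|\geq 2$ we are done, so it remains to handle $\det K=\pm 1$, where $Y:=\Sigma_2(K)$ is an integral homology sphere; I will show $\mathrm{rk}_{\Fbb}\,\widehat{\mathrm{HF}}(Y;\Fbb)\geq 2$. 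I assume $K$ nontrivial, and prime (the composite case falls outside this method, as it would require understanding $\widehat{\mathrm{HF}}$ of the branched covers of the prime summands --- the content of Conjecture~\ref{conj}).

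\emph{Prime satellite case} (logically a special case of the next one, but handled directly). Let $T$ be a companion torus, cutting $S^3$ into a solid torus $V$ containing $K$ as an essential pattern $P$ and the exterior $E(C)$ of a nontrivial companion $C$; then $Y$ is the double branched cover of $(V,P)$ glued along the preimage $\widehat T$ of $T$ to the double cover of $E(C)$. As $C$ is nontrivial, $\partial E(C)$ is incompressible in $E(C)$, hence $\widehat T$ is incompressible in the cover of $E(C)$; and an equivariant Dehn's-lemma argument shows $\widehat T$ is incompressible in the double branched cover of $(V,P)$ too --- after an equivariant isotopy, a compressing disk would either descend to a compression of $\partial V$ in $V\setminus P$ (impossible since $P$ is essential) or exhibit $P$ as a connected sum of the core of $V$ with a nontrivial local knot, forcing $K$ to be composite. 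So $\widehat T$ is incompressible in $Y$, which is therefore toroidal, and Theorem~\ref{thm:main-intro} yields $\widehat{\mathrm{HF}}(Y;\Fbb)\neq\Fbb$.

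\emph{Non-$\pi$-hyperbolic case.} If $\mathrm{rk}_{\Fbb}\,\widehat{\mathrm{HF}}(Y;\Fbb)\geq 2$ we are again done, so suppose $\widehat{\mathrm{HF}}(Y;\Fbb)=\Fbb$; then $Y$ is an irreducible (because $K$ is prime) homology-sphere $L$-space. By Theorem~\ref{thm:main-intro}, the geometrization theorem, and the classification of Seifert fibered homology spheres with trivial $\widehat{\mathrm{HF}}$ (as reviewed in the introduction), $Y$ is $S^3$, $\Sig(2,3,5)$, or hyperbolic. It is not hyperbolic: by Mostow rigidity the covering involution of $Y$ would be conjugate to an isometry, so the quotient orbifold --- $S^3$ with cone angle $\pi$ along $K$ --- would be hyperbolic, that is, $K$ would be $\pi$-hyperbolic, contrary to hypothesis. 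It is not $S^3$: otherwise $K$ would be the unknot, by Waldhausen's resolution of the Smith conjecture for involutions. Hence $Y\cong\Sig(2,3,5)$; since the Poincar\'e sphere is the double branched cover of no knot except the $(3,5)$-torus knot and its mirror, $K$ is $T(3,5)$ up to mirror, and a direct computation of $\widetilde{\mathrm{Kh}}(T(3,5))$ gives rank $>1$.

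\emph{Expected difficulties.} There is no single deep step; the corollary is an assembly of Theorem~\ref{thm:main-intro} with several classical inputs. The part requiring the most care is the equivariant $3$-manifold topology: in the satellite case, executing the equivariant Dehn's lemma together with the ``prime implies no local-knot summand'' step; and, if one prefers to reach the non-$\pi$-hyperbolic case through orbifold geometrization rather than Mostow rigidity, checking that an essential Euclidean $2$-suborbifold of $(S^3,K)$ (a torus disjoint from $K$, or a pillowcase $S^2(2,2,2,2)$) lifts to a genuinely incompressible torus in $Y$. One must also keep track of the exceptional outcomes --- the unknot (excluded by nontriviality), $T(3,5)$ (disposed of by direct computation), and composite knots (left to Conjecture~\ref{conj}). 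The remaining ingredients (the branched-cover spectral sequence, the $\widehat{\mathrm{HF}}$ rank bound, Theorem~\ref{thm:main-intro} and its structural consequences, geometrization and Mostow rigidity, Waldhausen's theorem, and the reduced Khovanov homology of $T(3,5)$) are used as black boxes.
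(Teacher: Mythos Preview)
Your argument follows the same broad outline as the paper's --- pass to the branched double cover $\Sigma(K)$, combine the spectral sequence of \cite{OS-branched} with Theorem~\ref{thm:main-intro}, and invoke geometrization --- but the packaging differs. The paper (Proposition~\ref{prop:pi-hyperbolic}) applies Thurston's \emph{orbifold} geometrization directly to $(S^3,K)$ after disposing of doubly composite knots via Lemma~\ref{lem:doubly-composite}; this trichotomizes $\Sigma(K)$ as spherical, Seifert fibered, or hyperbolic with an involution-invariant metric, and the first two cases are eliminated by hand. You instead apply manifold geometrization to $\Sigma(K)$ to land in $\{S^3,\Sigma(2,3,5),\text{hyperbolic}\}$ and then exclude the hyperbolic option ``by Mostow rigidity''. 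Be careful here: Mostow rigidity only shows the covering involution is \emph{homotopic} to an isometry; concluding it is \emph{conjugate} to one --- so that the quotient orbifold is genuinely hyperbolic and $K$ is $\pi$-hyperbolic --- requires precisely the orbifold theorem (or an equivalent equivariant rigidity result for finite group actions on hyperbolic $3$-manifolds). So both routes ultimately rest on the same deep input, and the paper's direct appeal to orbifold geometrization is the cleaner formulation. On the other hand, your explicit reduction to $\det K=\pm1$ and your more careful incompressibility argument in the satellite case refine the paper's one-line assertions, and your remark about composite knots is apt: the paper's proof of Proposition~\ref{prop:pi-hyperbolic} tacitly assumes primeness when it passes from ``not doubly composite'' to ``doubly prime''.
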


\subsection{Bordered Floer homology for a knot complement}\label{subsec:re-statement}
The proof of Theorem~\ref{thm:main-intro} rests heavily on a 
construction  of the bordered Floer module $\ov\CFDT(Y(K))$ associated with the
 complement of a knot $K\subset Y$ using the knot Floer complex $\CFKT(Y,K)$. 
Consider a doubly pointed 
Heegaard diagram $(\Sig,\alpha,\beta;u,v)$ for 
$K$. The markings $u$ and $v$ give the map
$$\spinc=\spinc_{u,v}:\mathbb{T}_{\alpha}\cap\mathbb{T}_{\beta}
\lra \RelSpinC(Y,K)$$
where $\spinc(\x)$ denotes the relative $\SpinC$ class assigned to $\x$
in the sense of \cite{Ni},  which is defined by assigning a nowhere vanishing vector field 
on $Y-\mathrm{nd}(K)$ to $\x$ which is tangent to the boundary. 
Multiplying the vector fields by $-1$ gives a map 
$$J:\RelSpinC(Y,K)\lra \RelSpinC(Y,K)$$
and the map $\spinc\mapsto c_1(\spinc)=\spinc-J(\spinc)\in\Ht^2(Y,K;\Z)$
gives an identification of $\RelSpinC(Y,K)$ with $\Z$, which will be 
implicit throughout this paper. Let 
$$C=\left\langle[\x,i,j]\ \big|\ 
\x\in\mathbb{T}_{\alpha}\cap\mathbb{T}_{\beta},\ 
\spinc(\x)-i+j=0\right\rangle_\Z$$
 denote the $\Z\oplus \Z$ filtered chain complex associated 
with $K$.
Following \cite{OS-surgery} we may consider the sub-modules
$$ C\{i=a,j=b\}, C\{i=a,j\leq b\}\ \ \text{and}\ \ 
 C\{i\leq a,j=b\}\  %\text{etc.},
\ \ \ a,b\in\Z\cup\{\infty\}$$
 with the induced structure as a chain complex. Set 
 $C\{i=a\}$ to be the chain complex $C\{i=a,j\leq \infty\}$ and 
 $C\{j=b\}$ to be $C\{i\leq \infty,j=b\}$.
 For any relative $\SpinC$ class $\spinc\in\Z=\RelSpinC(Y,K)$ let
\begin{displaymath}
\begin{split}
&i_n^\spinc=i_n^\spinc(K):C\{i\leq \spinc,j=0\}\oplus C\{i=0,j\leq n-\spinc-1\}\lra 
C\{j=0\}\\
&i_n^\spinc\left([\x,i,0],[\y,0,j]\right):=[\x,i,0]+\taub[\y,0,j],
\end{split}
\end{displaymath}
where $\taub:C\{i=0\}\ra C\{j=0\}$ is the chain homotopy equivalence
corresponding to the Heegaard moves which change the diagram
$(\Sig,\alphas,\betas;u)$ to $(\Sig,\alphas,\betas;v)$.
Let $Y_n(K)$ denote the three-manifold obtained from $Y$ 
by $n$-surgery on $K$ and let 
$K_n$ denote the corresponding knot 
inside   $Y_n(K)$ which is determined by the aforementioned surgery.
\begin{prop}\label{prop:surgery-formula-intro}
The homology of the mapping cone $M(i_n^\spinc)$ gives 
$$\Hbb_n(K,\spinc)=\ov\HFKT(Y_n(K), K_n,\spinc).$$ 
\end{prop}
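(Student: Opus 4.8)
The plan is to compute $\ov\HFKT(Y_n(K),K_n,\spinc)$ directly from a Heegaard diagram adapted to $n$-surgery, following the strategy by which Ozsv\'ath--Szab\'o derive their integer surgery formula in \cite{OS-surgery}, while keeping track of the second basepoint that records the dual knot $K_n$. First I would take the given doubly pointed diagram $(\Sig,\alphas,\betas;u,v)$ for $K\subset Y$, in which $\beta_g$ is a meridian of $K$, and replace $\beta_g$ by a curve $\gamma_g$ representing the $n$-framed slope, isotoped so that it runs $n$ times through a long ``winding region'' parallel to the zero-framed longitude. Inserting a new pair of basepoints into this winding region, on the two sides of the core of the surgery solid torus, produces a doubly pointed diagram for the pair $(Y_n(K),K_n)$. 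Its generators are pairs $(\x,\xi)$ with $\x\in\Ta\cap\Tb$ and $\xi$ one of the $n$ points of $\gamma_g\cap\alpha_g$ in the winding region, and the $\SpinC$ structure on $Y_n(K)$ together with the relative $\SpinC$ class of $(\x,\xi)$ is computed from $\spinc_{u,v}(\x)$ and the index of $\xi$ by the standard winding-number arithmetic.

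Fixing a relative $\SpinC$ class $\spinc$ on $(Y_n(K),K_n)$ under the identification $\RelSpinC(Y_n(K),K_n)\cong\Z$, the surviving generators organize into two families according to the side of the winding region on which their $\xi$-component lies. One family is in grading-preserving bijection with the generating set of $C\{i\leq\spinc,j=0\}$ and the other with that of $C\{i=0,j\leq n-\spinc-1\}$; the precise truncation windows $i\leq\spinc$ and $j\leq n-\spinc-1$ are exactly what is forced by equating the winding index of $\xi$ with the relative grading, a bookkeeping computation of the same flavor as in \cite{OS-surgery}. Moreover the surgery collapses the $i$-filtration (resp.\ the $j$-filtration) on these pieces, so that each of them, as well as $C\{j=0\}$ itself, lies in a single relative $\SpinC$ grading of $K_n$; this is why one expects the homology of an entire mapping cone to be a single group $\ov\HFKT(Y_n(K),K_n,\spinc)$.

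The heart of the argument, and the step I expect to be the main obstacle, is identifying the differential. Counting holomorphic disks in the winding-region diagram that miss the first basepoint, and stretching the neck along the boundary of the winding region, separates three kinds of contributions: (i) disks supported away from the winding region, which recover exactly the internal differentials exhibiting $C\{i\leq\spinc,j=0\}$ as a subcomplex of $C\{j=0\}$ and $C\{i=0,j\leq n-\spinc-1\}$ as a subcomplex of $C\{i=0\}$; (ii) disks crossing the winding region exactly once, running from a generator of the second family to a generator of $C\{j=0\}$, whose count degenerates into a holomorphic triangle in a stabilized diagram together with a disk in the original diagram, and which --- once generators of $C\{i=0\}$ are matched with generators of $C\{j=0\}$ by the triangle map associated with sliding the basepoint from $u$ to $v$ --- reproduces precisely the composite $C\{i=0,j\leq n-\spinc-1\}\hookrightarrow C\{i=0\}\xrightarrow{\ \taub\ }C\{j=0\}$; and (iii) no other disks, by a Maslov-index and winding-number obstruction for a suitably admissible, sufficiently stretched diagram, reducing for small $|n|$ to the rational surgery diagram of \cite{OS-surgery} or to a truncation argument. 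The genuinely delicate point is (ii): showing that the cross-region count is \emph{exactly} the homotopy equivalence $\taub$ requires running the handleslide/triangle/homotopy-equivalence comparison of \cite{OS-surgery} with the extra basepoint in place. Granting this, the differential on $\ov\CFKT(Y_n(K),K_n,\spinc)$ is by construction that of the mapping cone $M(i_n^\spinc)$, so its homology is $\Hbb_n(K,\spinc)$; independence of the auxiliary choices (almost complex structure, isotopies, length of the winding region) follows from the invariance of knot Floer homology. As a consistency check, one can alternatively see the same formula through the pairing theorem, writing $\ov\CFKT(Y_n(K),K_n)$ as the box tensor product of $\ov\CFDT(Y(K))$ with the type-$A$ module of a solid torus carrying a marked core, and simplifying via the description of $\ov\CFDT(Y(K))$ in terms of $\CFKT(Y,K)$.
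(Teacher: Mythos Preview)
Your proposal has a genuine gap: the mapping cone $M(i_n^\spinc)$ has generators both from its source $C\{i\leq\spinc,j=0\}\oplus C\{i=0,j\leq n-\spinc-1\}$ and from its target $C\{j=0\}$, but the Heegaard diagram you describe for $(Y_n(K),K_n)$ only produces the first two families. In your step~(ii) you speak of disks ``running from a generator of the second family to a generator of $C\{j=0\}$'', yet $C\{j=0\}$ is not a set of generators of your complex --- it is $\ov\CFT(Y)$, which does not arise from any Heegaard diagram for $Y_n(K)$. What your sketch actually recovers is the \emph{large}-surgery formula: when the framing is large relative to the genus, the two windows exhaust the relevant generators and the differential restricts cleanly, giving a direct identification with a subquotient of $C$ (no cone needed). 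For arbitrary $n$ this fails, and no direct disk count in a single diagram for $Y_n(K)$ will manufacture the extra $C\{j=0\}$ summand or the map into it.

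The paper supplies precisely this missing ingredient by a different route. It works with a Heegaard multi-diagram involving $\betas$, $\betas_n$, and $\betas_{n+m}$ for $m$ large, and proves via an exact-triangle argument (holomorphic triangle, square, and pentagon counts; Theorem~\ref{thm:mapping-cone}) that $\ov\CFKT(K_n,\spinc)$ is quasi-isomorphic to the mapping cone of an explicit chain map
\[
f_n^\spinc:\ov\CFKT(K_{n+m},\spinc)\oplus\ov\CFKT(K_{n+m},\spinc+m)\longrightarrow\ov\CFT(Y).
\]
Only \emph{after} this, with $m$ large, does a winding-region analysis of the type you describe identify the two source summands with $C\{i\leq\spinc,j=0\}$ and $C\{i=0,j\leq n-\spinc-1\}$, and (via Lemma~\ref{lem:f-chain-homotopy}) the map $f_n^\spinc$ with $i_n^\spinc$; this is Theorem~\ref{thm:surgery-formula}. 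So the auxiliary large surgery $K_{n+m}$ is not a technicality one can bypass --- it is exactly what produces the target $C\{j=0\}$ and the map to it. Finally, your bordered consistency check is circular in this paper: the description of $\ov\CFDT(Y(K))$ in Theorem~\ref{thm:BFH} is \emph{derived from} Proposition~\ref{prop:surgery-formula-intro}, not the other way around.
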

Note that $M(i_0^\spinc)$ is a sub-complex of both $M(i_1^\spinc)$
and $M(i_1^{\spinc+1})$.
We denote the embedding maps by  $F_\infty^\spinc=F_\infty^\spinc(K)$ and 
$\ovl{F}_\infty^{\spinc+1}=\ovl{F}_\infty^{\spinc+1}(K)$, respectively. The
quotient of $M(i_1^\spinc)$ by $F_\infty^\spinc\left(M(i_0^\spinc)\right)$
 is isomorphic to 
$$\ov\CFKT(K,\spinc)\simeq C\{i=0,j=-\spinc\}.$$ 
Denote the quotient map by $F_0^\spinc=F_0^\spinc(K)$. 
Similarly, define the quotient map $\ovl{F}_0^\spinc=\ovl{F}_0^\spinc(K)$
from $M(i_1^\spinc)$ to 
$M(i_1^\spinc)/\Image(\ovl{F}_\infty^\spinc)$.
The short exact sequences 
%\begin{equation}\label{eq:SES-2knots}
\begin{displaymath}
\begin{split}
&\begin{diagram}
0&\rTo & M\left(i_0^\spinc\right)&\rTo{F_\infty^\spinc}& 
M\left(i_1^\spinc\right)&\rTo{F_0^\spinc\ \ }&\ov\CFKT(K,\spinc)
&\rTo &0
\end{diagram}\ \ \ \ \ \text{and}\\
&\begin{diagram}
0&\rTo & M\left(i_0^{\spinc-1}\right)&\rTo{\ovl{F}_\infty^\spinc}& 
M\left(i_1^\spinc\right)&\rTo{\ovl{F}_0^\spinc\ \ }&\ov\CFKT(K,\spinc)
&\rTo &0
\end{diagram}
\end{split}
\end{displaymath}
%\end{equation}
give the following two homology exact triangles
\begin{equation}\label{eq:LES-2knots}
\begin{diagram}
\Hbb_0(\spinc)&&\rTo{\pphi_\infty^\spinc}&& \Hbb_1(\spinc)&&
\Hbb_0(\spinc-1)&&\rTo{\pphibar_\infty^\spinc}&& \Hbb_1(\spinc)\\
&\luTo{\pphi_1^\spinc}&&\ldTo{\pphi_0^\spinc}
&&&&\luTo{\pphibar_1^\spinc}&&\ldTo{\pphibar_0^\spinc}&\\
&&\Hbb_\infty(\spinc)&&&\text{and}&&&\Hbb_\infty(\spinc)&&
\end{diagram},
\end{equation}
where $\Hbb_\bullet(\spinc)=\Hbb_\bullet(K,\spinc)$. We let 
\begin{displaymath}
C_\bullet(K)=\bigoplus_{\spinc\in\Z}C_\bullet(K,\spinc)\ \ \text{and}\ \ 
\Hbb_\bullet(K)=\bigoplus_{\spinc\in\Z}\Hbb_\bullet(K,\spinc),\ \ \ 
\bullet\in\{0,1,\infty\}
\end{displaymath}
where $C_\bullet(K,\spinc)=M(i_\bullet^\spinc)$ for $\bullet=0,1$ and 
$C_\infty(K,\spinc)=C\{i=\spinc,j=0\}$.
Denote the differential of 
$C_\bullet(K)$ by $d_\bullet$ for $\bullet\in\{0,1,\infty\}$. Set
$M(K)=C_0(K)\oplus C_1(K)$ and $L(K)=C_1(K)\oplus C_\infty(K)$.
 Let $F_\bullet=F_\bullet(K)$ denote the map obtained by 
putting all $F_\bullet^\spinc$ together. These maps will be called the {\emph{bypass
homomorphisms}}.
\\

The zero framed longitude of $K$ and its meridian give a parametrization of the 
the torus boundary $-T^2$ of $Y\setminus \mathrm{nd}(K)$. The corresponding bordered three-manifold is denoted by $Y(K)$. 
 A differential graded algebra $\mathcal{A}(T^2,0)$ is associated with $T^2$.
 The bordered Floer module $\ov\CFDT(Y(K))$  
is then a module over the differential graded algebra 
$\mathcal{A}(T^2,0)$. 
Following the notation of Subsection~4.2 from 
\cite{LOT-E},  $\mathcal{A}(T^2,0)$ is generated, as a module over $\Fbb$, 
by the idempotents $\imath_0$ and $\imath_1$, and the chords 
$\rho_1,\rho_2,\rho_3,\rho_{12}=\rho_{1}\rho_2,\rho_{23}=\rho_2\rho_3$ and 
$\rho_{123}=\rho_1\rho_2\rho_3$;
\begin{displaymath}
\begin{split}&\\
&\mathcal{A}(T^2,0)=
\Big\langle\begin{diagram}[w=3em]
\imath_0 \ \bullet\ & \upperarrow{\rho_1}
\lift{-2}
{\strarrow{\rho_2}}
\lowerarrow{\rho_3} &\  \bullet\ \imath_1
\end{diagram}\Big\rangle/\left(\rho_2\rho_1=\rho_3\rho_2=0\right).\\
&
\end{split}
\end{displaymath}  

\begin{thm}\label{thm:BFH}
With the above notation,
 the bordered Floer complex $\ov\CFDT(Y(K))$ is  quasi-isomorphic to the 
 left module over the differential graded algebra $\mathcal{A}(T^2,0)$, which is generated 
 by $\imath_0.L(K)$ and $\imath_1. M(K)$ and is equipped with the differential 
 $\partial\co \ov\CFDT(Y(K))\ra \ov\CFDT(Y(K))$ defined by 
 \begin{equation}\label{eq:differential}
\partial \colvec{
\x\\ \y }=
\begin{cases}
\colvec{
d_0(\x)\\ \ovl{F}_\infty(\x)+d_1(\y)}
+\rho_2.\colvec{
0\\ \x}&\text{if }\colvec{\x\\ \y}\in M(K)\\
\\
\colvec{
d_1(\x)\\ F_0(\x)+d_\infty(\y)}
+\colvec{
\rho_1 F_\infty(\x)\\ \rho_3\ovl{F}_0(\y)+\rho_{123}\ovl{F}_0(F_\infty(\x))}
&\text{if }\colvec{\x\\ \y }\in L(K)\\
\end{cases} 
 \end{equation}
 \end{thm}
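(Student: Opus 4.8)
The plan is to compute $\ov\CFDT(Y(K))$ directly from a bordered Heegaard diagram adapted to a doubly pointed diagram for $K$, in the spirit of Lipshitz--Ozsv\'ath--Thurston's passage from $\CFKT$ to $\ov\CFDT$ of a knot complement \cite{LOT-E}, but carrying along the decomposition by relative $\SpinC$-structures so that the output reorganizes itself into the mapping cones $M(i_0^\spinc)$ and $M(i_1^\spinc)$. Concretely, I would start with a doubly pointed Heegaard diagram $(\Sig,\alphas,\betas;u,v)$ for $K\subset Y$, excise a neighborhood of an arc joining $u$ to $v$, and glue in the genus-one bordered piece realizing the torus boundary $-T^2$ with the meridian and the $0$-framed longitude as parametrizing curves; one arranges the resulting diagram $\mathcal H$ to be provincially admissible. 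Its two $\alpha$-arcs lie in the region that used to contain $u$ and $v$, so a generator of $\ov\CFDT(\mathcal H)$ is an intersection point $\x\in\Ta\cap\Tb$ of the original diagram together with a choice of occupied $\alpha$-arc; this choice is exactly an idempotent, and in the $\imath_1$ case also a $\betas$-circle disjoint from $\x$. Passing between the two occupations amounts to reading the same point off the $v$-diagram rather than the $u$-diagram, hence is governed by the chain homotopy equivalence $\taub\co C\{i=0\}\ra C\{j=0\}$ appearing in the definition of $i_n^\spinc$.

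\textbf{Matching the two sides.} Next I would sort the generators by their relative $\SpinC$-class via $\spinc_{u,v}$ and the identification $\RelSpinC(Y,K)\cong\Z$, and show that in a fixed class $\spinc$ the $\imath_0$-generators span a complex identified with $C_1(K,\spinc)\oplus C_\infty(K,\spinc)$ and the $\imath_1$-generators span one identified with $C_0(K,\spinc)\oplus C_1(K,\spinc)$, so that over all $\spinc$ one recovers $\imath_0.L(K)$ and $\imath_1.M(K)$. This is where the mapping-cone descriptions enter: the ``vertical'' and ``horizontal'' subcomplexes visible in the $0$-framed bordered diagram are precisely the truncations of $\CFKT(Y,K)$ out of which the cones $M(i_0^\spinc)$ and $M(i_1^\spinc)$ are built, the internal connecting maps are the $\ovl F_\infty$ and $F_0$ of \eqref{eq:LES-2knots}, and the homology identification in Proposition~\ref{prop:surgery-formula-intro} furnishes a consistency check on the bookkeeping.

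\textbf{The differential.} I would then split the contributions to $\partial$ into provincial holomorphic curves (disjoint from $\partial\mathcal H$) and curves with punctures asymptotic to the Reeb chords $\rho_1,\rho_2,\rho_3$. The provincial count reproduces the differential of $\CFKT$ restricted to each of the three pieces, i.e.\ $d_0$, $d_1$, $d_\infty$, together with $\ovl F_\infty$ and $F_0$. An index-and-area count then shows that the only non-provincial curves with nonzero count have punctures covering a single chord $\rho_1$, $\rho_2$ or $\rho_3$, or the whole sequence $\rho_1\rho_2\rho_3$; identifying their homotopy classes with the occupation changes produces exactly the $\rho_2$ term on $M(K)$ and the $\rho_1 F_\infty$, $\rho_3\ovl F_0$ and $\rho_{123}\ovl F_0 F_\infty$ terms on $L(K)$ of \eqref{eq:differential}. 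That $\partial^2=0$ then follows from $d_\bullet^2=0$, the chain-map and exact-sequence properties of the bypass homomorphisms, and the relations $\rho_2\rho_1=\rho_3\rho_2=0$, and serves as a final cross-check.

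\textbf{The main obstacle.} The delicate point is showing that the coefficient of $\rho_{123}$ is the honest composite $\ovl F_0\circ F_\infty$, rather than some a priori different chain map $C_1(K)\ra C_0(K)$ or that composite corrected by a spurious homotopy term. This forces one to analyze the ends of the one-parameter moduli spaces of curves whose punctures sweep out $\rho_1,\rho_2,\rho_3$ in order: as the neck along $\partial\mathcal H$ is stretched, such a curve must degenerate into a provincial piece realizing $F_\infty$, a piece crossing $\rho_1$, and a piece crossing $\rho_3$ realizing $\ovl F_0$, and one must show these broken configurations are counted exactly by $\ovl F_0\circ F_\infty$. A route that side-steps the hardest gluing is to invoke instead the general structure theorem for $\ov\CFDT$ of a framed knot complement \cite{LOT-E} -- whose proof uses only the $\Z$-filtered complex $\CFKT(Y,K)$ and hence applies with $Y$ in place of $S^3$ -- to obtain $\ov\CFDT(Y(K))$ in standard form (vertical and horizontal chains together with an unstable chain whose length depends on $\tau(K)$), and then to exhibit an explicit filtered change of basis turning that form into \eqref{eq:differential}; in this variant the remaining work is the purely algebraic task of collapsing the variable-length unstable chain into the uniform mapping-cone description.
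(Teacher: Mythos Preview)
Your approach is quite different from the paper's, and your direct route has a structural obstruction.

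The paper never computes from a bordered Heegaard diagram. Sections~\ref{sec:surgery}--\ref{sec:splicing} instead run a holomorphic-polygon argument: Theorem~\ref{thm:mapping-cone} gives the mapping-cone model for $\ov\CFKT(K_n,\spinc)$, and Lemmas~\ref{lem:pentagon}--\ref{lem:square} together with Theorems~\ref{thm:commutative-diagram} and~\ref{thm:maps} identify the holomorphic-triangle maps $\Ff_\bullet^\spinc,\ovl\Ff_\bullet^\spinc$ (the bypass homomorphisms of \cite{Ef-splicing}) with the concrete inclusions and quotients $F_\bullet^\spinc,\ovl F_\bullet^\spinc$ between the cones $M(i_\bullet^\spinc)$. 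The proof of Theorem~\ref{thm:BFH} itself is then a one-line citation of Proposition~7.2 of \cite{Ef-splicing}, which already expresses $\ov\CFDT(Y(K))$, up to quasi-isomorphism, in terms of any chain-level lifts of $\pphi_\bullet,\pphibar_\bullet$ sitting in the two short exact sequences. So the bordered-side computation is done once and for all in the companion paper; this paper's contribution is to supply the particular chain-level model coming from surgery.

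On your direct route, note that the module in \eqref{eq:differential} is infinite-dimensional over $\Fbb$: each $C_1(K,\spinc)=M(i_1^\spinc)$ contains a copy of $C\{j=0\}\cong\ov\CFT(Y)$, and although these summands are acyclic for $|\spinc|$ large they are never zero. No bordered Heegaard diagram for $Y(K)$ has infinitely many generators, so one cannot match generators as you describe; a quasi-isomorphism is the most one can produce, which pushes you to your alternative route anyway. There, two caveats: LOT's structure theorem needs the full $\Z\oplus\Z$-filtered $\CFKT^-$, not just the $\Z$-filtered hat version; and the ``purely algebraic'' passage from LOT's horizontal/vertical/unstable-chain description to the cones $M(i_\bullet^\spinc)$ is essentially the content of Sections~\ref{sec:surgery}--\ref{sec:splicing}, carried out here via holomorphic polygons rather than by a change of basis. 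A direct algebraic translation may well exist, but it is not the routine step you suggest, and in particular would have to absorb the framing-dependent unstable chain into the uniform mapping-cone description for $n=0,1$.
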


%\newpage
\section{Surgery on null-homologous knots}\label{sec:surgery}
\subsection{A triangle of chain maps}
By a Heegaard $n$-tuple we mean the data 
$$(\Sig,\alphas_1,...,\alphas_n;u_1,...,u_r)$$
where $\Sig$ is a Riemann surface of genus $g$, each $\alphas_i$ is $g$-tuples of disjoint
simple closed curves for $i=1,...,n$ and $u_j$ are markings in 
$\Sig-\sqcup_{i=1}^n \alphas_i$. Let $\mathbb{T}_{\alpha_i}\subset 
\Sym^g(\Sig)$ denote the  torus associated with $\alphas_i$ and let
$\x_i\in\mathbb{T}_{\alpha_i}\cap\mathbb{T}_{\alpha_{i+1}}$ for $i=1,...,n-1$ and 
$\x_n \in\mathbb{T}_{\alpha_1}\cap\mathbb{T}_{\alpha_{n}}$ be $n$ intersection 
points. Let $\pi_2(\x_1,...,\x_n)$ denote the set of homotopy classes of 
$n$-gons connecting $\x_1,...,\x_n$ and define $\pi_2^j(\x_1,...,\x_n;u_1,...,u_r)$
to be the subset of $\pi_2(\x_1,...,\x_n)$ which consists of the classes with Maslov 
index $j$ which have zero intersection number with the 
codimension one sub-varieties $L_{u_1},...,L_{u_r}$ of $\Sym^g(\Sig)$
which correspond to the markings $u_1,...,u_r$.\\

Let $K\subset Y$ be a knot inside a homology sphere $Y$.
Consider a Heegaard diagram  
$$H=(\Sig,\alphas=\{\alpha_1,...,\alpha_g\},\betas=\{\beta_1,...,
\beta_g\},p_\infty)$$
for the pair $(Y,K)$, where $\beta_g$ 
corresponds to the meridian of $K$ and  the marking $p_\infty$ is
placed on $\beta_g$, so that  putting a pair of marked points near $p_\infty$ and on the
two sides of $\beta_g$ we obtain a doubly pointed Heegaard diagram
for  $K$. Suppose that $\lambda$
represents a zero-framed longitude for the knot $K$.
Let $\lambda_n$ be a small perturbation of the juxtaposition $\lambda+n\beta_g$
and $\beta_i^n$ denote a small Hamiltonian isotope of $\beta_i$ for $i=1,...,g-1$.
The Heegaard diagram
$$H_n=(\Sig,\alphas,\betas_n=\{\beta_1^n,...,\beta_{g-1}^n,\lambda_n\},p_n)$$
 gives a diagram for $(Y_n(K),K_n)$, where
$p_n$ is a marked point at the intersection of $\lambda_n$ and $\beta_g$. 
With the integers $m>n\geq 0$ fixed, we assume that 
$\lambda_n$ and $\lambda_{n+m}$ intersect each other in $m$ transverse points, 
and that for an intersection point $q$ of these latter curves the points 
$q,p_n,p_{m+n}$ are the vertices of a triangle $\Delta$, which is one of the 
connected components in 
$\Sig- \left(\alphas\cup \betas\cup\{\lambda_n,\lambda_{n+m}\}\right)$.
From the $4$ quadrants which have $q$ as a corner two of them  belong to the neighbors
of $\Delta$. Place a pair of markings $u$ and $v$ in these two quadrants, and use them 
as the punctures in the following discussion.\\

 Fix a relative $\SpinC$ class 
 $\spinc\in\RelSpinC(Y,K)%=\RelSpinC(Y_{n}(K),K_{n})=\RelSpinC(Y_{n+m}(K),K_{n+m})
 =\Z$.
The  complex  associated with
the Heegaard diagram  $R_{n}=(\Sig,\alphas,\betas_{n};u,v)$ and the relative 
$\SpinC$ class $\spinc$ is denoted by $\ov{\mathrm{CFK}}(K_{n},\spinc)$, while 
the complex associated with the Heegaard diagram  
$R_{n+m}=(\Sig,\alphas,\betas_{n+m};u,v)$ and the relative $\SpinC$ classes 
$\spinc$ and $\spinc+m$ is denoted by
$$\ov{\mathrm{CFK}}(K_{m+n},\spinc)\oplus \ov{\mathrm{CFK}}(K_{m+n},\spinc+m).$$

Let $\Theta_f$ denote the top generator of the Heegaard Floer homology 
group associated with $(\Sig,\betas_{n+m},\betas;u,v)$. 
Consider the holomorphic triangle map
\begin{equation}\label{eq:f}
\begin{split}
&f^\spinc:\ov{\mathrm{CFK}}(K_{m+n},\spinc)\oplus 
\ov{\mathrm{CFK}}(K_{m+n},\spinc+m)\ra \ov{\mathrm{CF}}(Y)\\
&f^\spinc(\x):=\sum_{\z\in \Ta\cap \mathbb{T}_{\beta}}
\sum_{\substack{\Delta\in \pi_2^0(\x,\Theta_f,\z;u,v)%,\\  \mu(\Delta)=0,\
 %n_u(\Delta)=n_v(\Delta)=0
 }}\#\big(\ov{\Mod}(\Delta)\big).\z.\\
\end{split}
\end{equation}
The diagram $(\Sig,\alphas,\betas_{n},\betas_{n+m};u,v)$ determines a 
 cobordism  from $Y_{n}(K)\coprod L $
 to $Y_{n+m}(K)$, where $L=L(m,1)\#(\#^{g-1}S^1\times S^2)$.
 The intersection point $q$ determines a canonical 
 $\SpinC$ class $\spinc_q\in \SpinC(L)$ in the sense of Definition 3.2 of \cite{OS-surgery}.
 Let $\Theta_g$ denote the top generator of 
 $ \ov{\CFT}(\Sig,\betas_n,\betas_{n+m};u,v)$
 which corresponds to $\spinc_q$, or equivalently to the intersection point $q$.
 Define
\begin{displaymath}
\begin{split}
&g^\spinc:\ov\CFKT(K_n,\spinc)\lra \ov\CFKT(K_{n+m})\\
&g^\spinc(\x):=\sum_{\z\in \Ta\cap \mathbb{T}_{\beta_{n+m}}}
\sum_{\substack{\Delta\in \pi_2^0(\x,\Theta_g,\z;u,v) }}
\#\big(\ov{\Mod}(\Delta)\big).\z.\\
\end{split}
\end{displaymath}
Following Section 8 of \cite{AE} (Lemma 8.2 and 
the discussion after that), if $\spinc(\x)=\spinc$
\begin{displaymath}
g^\spinc(\x)\in \ov{\mathrm{CFK}}(K_{n+m},\spinc)\oplus
\ov{\mathrm{CFK}}(K_{n+m},m+\spinc).
\end{displaymath}

Finally, the top generator $\Theta_h\in\ov{\CFT}(\Sig,\betas,\betas_n;u,v)$ and the 
Heegaard triple $(\Sig,\alphas,\betas,\betas_{n};u,v)$ determine the map $h^\spinc$
on $\ov\CFT(Y)$, which is defined by
\begin{displaymath}
\begin{split}
&h^\spinc(\x):=\sum_{\substack{\z\in \Ta\cap \mathbb{T}_{\beta_{n}}\\ \spinc(\z)=\spinc}}
\sum_{\substack{\Delta\in \pi_2^0(\x,\Theta_h,\z;u,v)
%\\ n_w(\Delta)+\spinc(\x)\equiv\spinc\ (\mathrm{mod}\ m)
}}\#\big(\ov{\Mod}(\Delta)\big).\z.
\end{split}
\end{displaymath}
We thus arrive at the triangle of chain maps
\begin{equation}\label{eq:hol-triangle}
\begin{diagram}
\ov{\mathrm{CF}}(Y)&&\rTo{h^\spinc=h^\spinc_n}&&
\ov{\mathrm{CFK}}(K_{n},\spinc)\\
&\luTo{f^\spinc=f^\spinc_n}&&\ldTo{g^\spinc=g^\spinc_n}&\\
&&\ov{\mathrm{CFK}}(K_{m+n},\spinc)\oplus \ov{\mathrm{CFK}}(K_{m+n},\spinc+m)&&
\end{diagram}
\end{equation}

\subsection{Exactness of triangle}
Let $M(f^\spinc_n)$ denote the mapping cone of $f^\spinc=f_n^\spinc$. 

\begin{thm}\label{thm:mapping-cone}
If $m$ is sufficiently  large  there is a map 
$$H_{h_n}^\spinc:\ov\CFKT(K_n,\spinc)\lra \ov\CFT(Y)$$
which satisfies $d\circ H_{h_n}^\spinc+H_{h_n}^\spinc\circ d=f_n^\spinc\circ g_n^\spinc$,
such that the chain map 
\begin{displaymath}
\begin{split}
&\imath_n^\spinc:\ov\CFKT(K_n,\spinc)\lra M(f_n^\spinc)\\
&\imath_n^\spinc(\x):=(g_n^\spinc(\x),H_{h_n}^\spinc(\x)),\ \ \ \forall\ \x
\in\ov\CFKT(K_n,\spinc),
\end{split}
\end{displaymath}
is a quasi-isomorphism.
\end{thm}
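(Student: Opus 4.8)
The plan is to view Theorem~\ref{thm:mapping-cone} as an instance of the standard mechanism by which a triangle of holomorphic triangle maps becomes exact at the level of chain complexes: the third vertex is quasi-isomorphic to the mapping cone of the map between the other two. I would follow the scheme of Section~8 of \cite{AE}, which adapts the exact triangle arguments of \cite{OS-surgery}. Thus the work divides into (i) producing the contracting homotopy $H_{h_n}^\spinc$ from a count of holomorphic quadrilaterals so that $\imath_n^\spinc$ is a chain map into $M(f_n^\spinc)$, and (ii) verifying the polygon--associativity relations that allow an algebraic exact triangle lemma to conclude that $\imath_n^\spinc$ is a quasi-isomorphism.

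For (i): work in the Heegaard quadruple $(\Sig,\alphas,\betas_n,\betas_{n+m},\betas;u,v)$ and let $H_{h_n}^\spinc(\x)$ be the count, weighted by $\#\ov{\Mod}(\Delta)$ and summed over $\z\in\Ta\cap\Tb$, of Maslov index $(-1)$ rectangles $\Delta\in\pi_2^{-1}(\x,\Theta_g,\Theta_f,\z;u,v)$. The standard analysis of the ends of the one-dimensional moduli spaces of holomorphic rectangles then gives
\[
d\circ H_{h_n}^\spinc + H_{h_n}^\spinc\circ d \;=\; f_n^\spinc\circ g_n^\spinc \;+\; \ov G_n^\spinc,
\]
where $\ov G_n^\spinc$ is the triangle--type map for $(\Sig,\alphas,\betas_n,\betas;u,v)$ whose second input is the product $\Theta_g\cdot\Theta_f$ of top generators, i.e.\ the image of $\Theta_g\otimes\Theta_f$ under the triangle map of $(\Sig,\betas_n,\betas_{n+m},\betas;u,v)$. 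This is where the hypothesis $m\gg 0$ first enters: for $m$ large the domains on the $\betas_n,\betas_{n+m}$ side are forced to wind many times in the region around the punctures $u,v$, so every domain that could contribute to $\Theta_g\cdot\Theta_f$ or to $\ov G_n^\spinc$ while avoiding $u$ and $v$ carries arbitrarily large area; an energy bound then rules out rigid holomorphic representatives and the correction term vanishes. What is left is precisely the identity $d\circ H_{h_n}^\spinc + H_{h_n}^\spinc\circ d = f_n^\spinc\circ g_n^\spinc$ of the theorem, so $\imath_n^\spinc(\x)=(g_n^\spinc(\x),H_{h_n}^\spinc(\x))$ is a chain map $\ov\CFKT(K_n,\spinc)\to M(f_n^\spinc)$.

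For (ii): the triangle \eqref{eq:hol-triangle} is $3$-periodic, since $g_n^\spinc$ follows $h_n^\spinc$ which follows $f_n^\spinc$ with the same three complexes and maps recurring, so it suffices to establish the usual package of associativity relations for $f_n^\spinc,g_n^\spinc,h_n^\spinc$. The same quadrilateral argument (again discarding degenerate winding terms for $m\gg 0$) produces homotopies witnessing $h_n^\spinc\circ f_n^\spinc\simeq 0$ and $g_n^\spinc\circ h_n^\spinc\simeq 0$; calling the first homotopy $H'$, a holomorphic pentagon count then gives the secondary relation that $h_n^\spinc\circ H_{h_n}^\spinc + H'\circ g_n^\spinc$ is chain homotopic to the identity of $\ov\CFKT(K_n,\spinc)$. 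With these relations in hand the purely algebraic exact triangle lemma of Ozsv\'ath--Szab\'o (see \cite{OS-surgery}) applies, and the canonical map from $\ov\CFKT(K_n,\spinc)$ to $M(f_n^\spinc)$ built from $g_n^\spinc$ and the contracting homotopy $H_{h_n}^\spinc$ is a quasi-isomorphism; this map is exactly $\imath_n^\spinc$.

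The main obstacle is the large-$m$ input to these two homotopy identities: one must show that the degenerate polygon contributions (the rectangle splittings in the chain--map identity, the pentagon degenerations in the secondary identity) either vanish or recombine into the identity once $m$ is sufficiently large. As in \cite{OS-surgery} and \cite{AE}, this is handled by localizing the relevant polygons near the winding region of $\lambda_n$ and $\lambda_{n+m}$ and exploiting that, for $m\gg 0$, the forced winding makes the domains of any competing polygons disjoint from $u$ and $v$ too large in area to carry holomorphic representatives. Everything else---the construction of the maps, the associativity relations themselves, and the final algebraic deduction---is routine once this energy estimate is in place.
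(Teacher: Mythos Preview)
Your overall architecture is exactly that of the paper: build the three rectangle homotopies $H_f^\spinc, H_g^\spinc, H_h^\spinc$ witnessing $g\circ h\simeq 0$, $h\circ f\simeq 0$, $f\circ g\simeq 0$, then three pentagon maps $P_f^\spinc, P_g^\spinc, P_h^\spinc$, and feed the resulting identities into an algebraic exact-triangle lemma (Lemma~3.3 of \cite{AE}). Your definition of $H_{h_n}^\spinc$ via index $-1$ rectangles in $\pi_2^{-1}(\x,\Theta_g,\Theta_f,\z;u,v)$ is verbatim the paper's $H_h^\spinc$.

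However, your mechanism for killing the correction term in $d\circ H_{h_n}^\spinc + H_{h_n}^\spinc\circ d = f_n^\spinc\circ g_n^\spinc + \ov G_n^\spinc$ is wrong. You claim that for $m\gg 0$ every domain contributing to $\Theta_g\cdot\Theta_f$ is forced to wind and hence has large area, so an energy bound kills it. This is false: there \emph{are} small triangles in $\pi_2^0(\Theta_g,\Theta_f,\Theta;u,v)$ for the triple $(\betas_n,\betas_{n+m},\betas)$ --- indeed the analogous small triangles reappear explicitly in the paper's Section~\ref{sec:naturality} analysis of $G_\infty^\spinc$, where two such triangles (for $\Theta_{g_0}$ and $\Theta_{g_0}'$) are the whole point. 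The correct reason $\ov G_n^\spinc=0$ is that these small triangles come in \emph{cancelling pairs}; the paper states this for $H_g^\spinc$ and invokes the same argument for $H_h^\spinc$. Large $m$ is used for a different purpose: it separates the relevant relative $\SpinC$ classes ($\spinc$ versus $\spinc+m$) and makes the mod-$m$ constraints in the definitions of $H_g^\spinc$, $P_f^\spinc$, $P_h^\spinc$ meaningful, so that the cancellation survives the $\SpinC$ bookkeeping.

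Two smaller points. First, the pentagon relation does not produce a chain homotopy to the identity: the paper's equation~\eqref{eq:P-f} reads $d\circ P_f^\spinc + P_f^\spinc\circ d + J_f^\spinc = h^\spinc\circ H_h^\spinc - H_g^\spinc\circ g^\spinc$, where $J_f^\spinc$ is the triangle map for a small Hamiltonian perturbation $\betas_n\leadsto\betas_n'$. One only knows $J_f^\spinc$ (and likewise $J_g^\spinc, J_h^\spinc$) is a quasi-isomorphism, which is exactly the hypothesis of Lemma~3.3 of \cite{AE}. Second, that lemma requires all three such relations, so you need all three pentagon maps $P_f^\spinc, P_g^\spinc, P_h^\spinc$, not just one.
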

\begin{proof}
The proof is almost identical to the 
proof used in Section 8 from \cite{AE}. We  outline the proof to set up the notation.
Let us first define
\begin{displaymath}
\begin{split}
&H_f^\spinc:\ov\CFT(Y)\ra \ov\CFKT(K_{n+m},\spinc)\oplus \ov\CFKT(K_{n+m},m+\spinc)\\
&H_{f}^\spinc(\x):=
\sum_{\substack{\y\in\Ta\cap\mathbb{T}_{\beta_{n+m}}\\ \spinc(\y)-\spinc
\equiv 0\ (\mathrm{mod}\ m)}}
\sum_{\substack{\square\in\pi_2^{-1}(\x,\Theta_h,\Theta_g,\y;u,v)}}
\#\big(\Mod(\square)\big).\y.
\end{split}
\end{displaymath}
%\end{equation}
 The condition $\spinc(\y)-\spinc\equiv (\mathrm{mod}\ m)$ implies 
$\spinc(\y)\in\{\spinc,\spinc+m\}$ since $m$ is large.
Considering all possible boundary degenerations of the one-dimensional moduli
space corresponding to a square class $\square\in\pi_2^0(\x,\Theta_h,\Theta_g,\y;u,v)$
we find  
\begin{equation}\label{eq:H-f}
d\circ H_f^\spinc+H_f^\spinc\circ d=h^\spinc\circ g^\spinc.
\end{equation}
For (\ref{eq:H-f}) note that the holomorphic triangles 
$\Delta\in\pi_2^0(\Theta_h,\Theta_g,\Theta;u,v)$ with $\Theta\in\mathbb{T}_{\beta_{n+m}}
\cap\Tb$ come in canceling pairs, where 
the difference between the coefficients of every canceling pair at a marking $s$ (placed
on the left-hand-side of $\beta_g$ and close to $u$) is always a multiple of $m$.
Similarly, define
%\begin{equation}\label{eq:H-g-def}
\begin{displaymath}
\begin{split}
&H_{g}^\spinc:\ov\CFKT(K_{n+m},\spinc)\oplus \ov\CFKT(K_{n+m},m+\spinc)
\lra \ov\CFT(Y),\\
&H_{g}^\spinc(\x):=
\sum_{\y\in\Ta\cap\mathbb{T}_{\beta_{n}}}
\sum_{\substack{\square\in\pi_2^{-1}(\x,\Theta_f,\Theta_h,\y;u,v)\\
n_s(\square)\equiv 0\ (\mathrm{mod}\ m)}}\#\big(\Mod(\square)\big).\y.\\
\end{split}
\end{displaymath}
%\end{equation}
Since the contributing holomorphic 
 triangles corresponding to the Heegaard triple 
 $(\Sig,\betas_{n+m},\betas,\betas_n;u,v)$
 and the closed top generators $\Theta_f,\Theta_h$ come in canceling pairs, 
\begin{equation}\label{eq:H-g}
d\circ H_{g}^\spinc+H_{g}^\spinc=h^\spinc\circ f^\spinc.
\end{equation}
Finally, define the homotopy map $H_h^\spinc$ by
\begin{displaymath}
\begin{split}
&H_{h}^\spinc:\ov{\mathrm{CFK}}(K_{n},\spinc)\lra\ov{\mathrm{CF}}(Y) \\
&H_{h}^\spinc(\x)=\sum_{\y\in\Ta\cap\mathbb{T}_{\beta}}\ 
\sum_{\substack{\square\in\pi_2^{-1}(\x,\Theta_g,\Theta_f,\y;u,v)}}
\#\big(\Mod(\square)\big).\y.
\end{split}
\end{displaymath}
 Employ the same argument again to show that
 \begin{equation}\label{eq:H-h}
 d\circ H_h^\spinc+H_h^\spinc\circ d=f^\spinc\circ g^\spinc. 
\end{equation}  

We next introduce the pentagon maps. Let
 $\betas_n'$ denote a $g$-tuple of simple closed curves which are small Hamiltonian 
 isotopes of the curves in $\betas_n$. Choosing the Hamiltonian isotopy sufficiently small
 allows us to assume that the chain complex associated with 
 $(\Sig,\alphas,\betas_n';u,v)$ and the $\SpinC$ class $\spinc$ may be identified 
 with $\ov{\CFKT}(K_n,\spinc)$, since the  intersection points and the corresponding 
 moduli spaces connecting them change continuously  by slight Hamiltonian 
 perturbation of the  Lagrangian sub-manifolds. There is a top generator corresponding to
 $(\betas,\betas_n')$ which is in correspondence with $\Theta_h$. We denote this 
 generator by $\Theta_h'$.
 Define
 %\begin{equation}\label{eq:P-f-def}
\begin{displaymath}
\begin{split}
 &P_{f}^\spinc:\ov{\CFKT}(K_n,\spinc)\lra \ov{\CFKT}(K_n,\spinc),\\
 &P_{f}^\spinc(\x):=\sum_{\y\in\Ta\cap\mathbb{T}_{\beta_n}}
 \sum_{\substack{\pentagon\in\pi_2^{-2}(\x,\Theta_g,\Theta_f,\Theta_h',\y;u,v)\\
  n_s(\pentagon)\equiv 0\ (\mathrm{mod}\ m)}}
  \#\big(\Mod(\pentagon)\big).\y,\\
 \end{split} 
\end{displaymath} 
 %\end{equation}
 Consider different boundary degenerations of the $1$-dimensional moduli
 space associated with a pentagon of Maslov index $-1$, and note that
 \begin{itemize}
 \item There is a unique contributing square classes 
 $\square\in\pi_2^{-1}(\Theta_g,\Theta_f,\Theta_h',\Theta)$ 
 of index $-1$ which corresponds to  the %punctured Heegaard 
 quadruple
 $(\Sig,\betas_n,\betas_{n+m},\betas,\betas_n';u,v)$. Moreover, 
 $\Theta=\Theta_{n}$
 is the top generator for the %Heegaard 
 diagram $(\Sig,\betas_n,\betas_n';u,v)$.
 \item The contributing triangle classes 
 $$\Delta\in\pi_2^0(\Theta_g,\Theta_f,\Theta;u,v)\ \ \text{and}\ \  
 \Delta'\in\pi_2^0(\Theta_f,\Theta_h',\Theta;u,v)$$ 
corresponding to the triples 
$(\Sig,\betas_n,\betas_{n+m},\betas;u,v)$ and 
$ (\Sig,\betas_{n+m},\betas,\betas_n';u,v)$ come 
 in canceling pairs. 
 \end{itemize}
 These observations, combined with our earlier arguments, imply that
 \begin{equation}\label{eq:P-f}
 \begin{split}
 &d\circ P_{f}^\spinc+P_{f}^\spinc \circ d+J_{f}^\spinc
 =h^\spinc\circ H_{h}^\spinc- H_{g}^\spinc\circ g^\spinc,\ \ \text{where}\\
& J_{f}^\spinc(\x)=\sum_{\y\in\Ta\cap\mathbb{T}_{\beta_n'}}
 \sum_{\substack{\Delta\in\pi_2^0(\x,\Theta_n,\y;u,v)}}\#\big(\Mod(\Delta)\big).\y.
 \end{split}
 \end{equation}
 
Consider the $5$-tuple
$(\Sig,\alphas,\betas,\betas_n,\betas_{n+m},\betas';u,v,z)$,
where $\betas'=\{\beta_1',...,\beta_g'\}$ 
is a set of $g$ simple closed curves which are obtained from 
$\betas$ by a small Hamiltonian isotopy. Thus $\beta_i$ and $\beta_i'$ intersect
each other is a pair of canceling intersection points. We assume that the small area bounded 
between the two curves $\beta_g$ and $\beta_g'$ is formed as a union of two bigons; a
small bigon which is a subset of the connected component of 
$\Sig^\circ=\Sig-\alphas-\betas-\betas_n-\betas_{n+m}$ 
which contains the marking $v$ and a 
long and thin bigon which is stretched along $\beta_g$. We assume that the 
marking $z$ is chosen in the intersection of the second bigon with 
the connected component in $\Sig^\circ$ which corresponds to $u$. If the Hamiltonian 
perturbation is sufficiently small  the chain complex $\ov{\CFT}(\Sig,\alphas,\betas';u)$
may be identified with $\ov{\CFT}(Y)$. Define  
%\begin{equation}\label{eq:P-g-def}
\begin{displaymath}
\begin{split}
&P_{g}^\spinc:\ov{\CFT}(Y)\lra \ov\CFT(Y)\\
&P_{g}^\spinc(\x):=\sum_{\y\in\Ta\cap\mathbb{T}_{\beta'}}
\sum_{\substack{\pentagon\in\pi_2^{-2}(\x,\Theta_h,\Theta_g,\Theta_f',\y;u,v)\\
n_z(\pentagon)+\spinc(\x)\equiv \spinc\ (\mathrm{mod}\ m)}}
\#\big(\Mod(\pentagon)\big).\y.
\end{split}
\end{displaymath}
%\end{equation}
Five types of the ten possible degenerations in the boundary of the 
$1$-dimensional moduli space associated with  a pentagon class 
$$\pentagon\in\pi_2^{-1}(\x,\Theta_h,\Theta_g^q,\Theta_f',\y;u,v)\ \ 
\text{with}\ \  n_z(\pentagon)+\spinc(\x)\equiv \spinc\ (\mathrm{mod}\ m),$$
corresponding to a degeneration to a bigon and a pentagon, contribute to the coefficient 
of $\y$ in $(d\circ P_{g}^\spinc+P_{g}^\spinc\circ d)(\x)$.
The remaining five types correspond to the 
degenerations of $\pentagon$ into a square and a 
triangle. The choice of the markings implies that two of these degeneration types 
contribute to the coefficient of $\y$ in
$(f^\spinc\circ H_{f}^\spinc-H_{h}^\spinc\circ h^\spinc)(\x)$.
There is a unique contributing square class, corresponding to 
$(\Sig,\betas,\betas_n,\betas_{n+m},\betas';u,v)$ and the intersection points 
$\Theta_h,\Theta_g,\Theta_f',\Theta_\infty$, where $\Theta_\infty$
denotes the top generator for $(\Sig,\betas,\betas';u,v)$. Moreover, the triangles which 
contribute in $\pi_2(\Theta_h,\Theta_g,\Theta_f)$ and 
$\pi_2(\Theta_g,\Theta_f',\Theta_h')$ come in canceling pairs. Thus
 \begin{equation}\label{eq:P-g}
 \begin{split}
& d\circ P_{g}^\spinc+P_{g}^\spinc\circ d+ J_g^\spinc=
 f^\spinc\circ H_{f}^\spinc-H_{h}^\spinc\circ h^\spinc,\ \ \text{where}\\
&J_g^\spinc(\x)=\sum_{\y\in\Ta\cap\mathbb{T}_{\beta'}}
\sum_{\substack{\Delta\in\pi_2^0(\x,\Theta_{\infty},\y;u,v)}}
\#\big(\Mod(\Delta)\big).\y
 \end{split} 
 \end{equation}

Let $\betas_{n+m}'$ denote a $g$-tuple of simple closed curves which are small Hamiltonian 
 isotopes of the curves in $\betas_{n+m}$. Again, we 
 assume that the chain complex associated with 
 $(\Sig,\alphas,\betas_{n+m}';u,v)$ and the $\SpinC$ classes $\spinc,\spinc+m$ is
 identified  with $\ov{\CFKT}(K_{n+m},\spinc)\oplus \ov{\CFKT}(K_{n+m},\spinc+m)$. 
 There is a top generator $\Theta_g'$ for 
 $(\betas_n,\betas_{n+m}')$ which is in correspondence with $\Theta_g$. 
 Define
\begin{displaymath}
\begin{split}
&P_{h}^\spinc:\bigoplus_{\spinct\in\{\spinc,\spinc+m\}}
\ov{\CFKT}(K_{n+m},\spinct)
\lra \bigoplus_{\spinct\in\{\spinc,\spinc+m\}}
\ov{\CFKT}(K_{n+m},\spinct)\\
&P_{h}^\spinc(\x):=\sum_{\y\in\Ta\cap\mathbb{T}_{\beta_{n+m}'}}
\sum_{\substack{\pentagon\in\pi_2^{-2}(\x,\Theta_f,\Theta_h,\Theta_g',\y;u,v)\\
n_z(\pentagon)\equiv 0\ (\mathrm{mod}\ m)}}
\#\big(\Mod(\pentagon)\big).\y.
\end{split}
\end{displaymath}
A similar argument implies that
 \begin{equation}\label{eq:P-h}
 \begin{split}
& d\circ P_{h}^\spinc+P_{h}^\spinc\circ d+ J_h^\spinc=
 g^\spinc\circ H_{g}^\spinc-H_{f}^\spinc\circ f^\spinc,\ \ \ \text{where}\\
&J_h^\spinc(\x)=\sum_{\y\in\Ta\cap\mathbb{T}_{\beta_{n+m}'}}
\sum_{\substack{\Delta\in\pi_2^0(\x,\Theta_{n+m},\y;u,v)}}
\#\big(\Mod(\Delta)\big).\y,
 \end{split} 
 \end{equation}
and $\Theta_{n+m}$ is the top generator of $(\Sig,\betas_{n+m},\betas_{n+m}';u,v)$.
Since $J_f^\spinc,J_g^\spinc,J_h^\spinc$ are  quasi-isomorphisms, 
Lemma 3.3 from \cite{AE} completes the proof.
\end{proof} 

Choose the markings $s$ and $t$ on the Heegaard diagram so that for each one of the pairs
 $(z,s)$ and $(v,t)$ there is an arc connecting them on the Heegaard surface which cuts 
 $\beta_g$ in a single  
 transverse point and stays disjoint from all other curves in 
 $\alphas\cup\betas\cup\betas'\cup\betas_n\cup\betas_{n+m}$, see Figure~\ref{fig:pentagon}.
   Consider the chain map
\begin{displaymath}
\begin{split}
&\taub\circ \ovl{f}^\spinc:\ov\CFKT(K_{n+m},\spinc)\oplus\ov\CFKT(K_{n+m},\spinc+m)
\lra \ov\CFT(Y),\\
&\ovl{f}^\spinc(\x)=\sum_{\y\in\Ta\cap\Tb}\sum_{\substack{\Delta\in
\pi_2^0(\x,\y;s,t)}}\#(\Mod(\Delta)).\y,
\end{split}
\end{displaymath}
where $\taub:\ov\CFT(\Sig,\alphas,\betas;s)\ra\ov\CFT(\Sig,\alphas,\betas;u)$
is the chain homotopy equivalence given by the Heegaard moves which change 
$(\Sig,\alphas,\betas;s)$ to $(\Sig,\alphas,\betas;u)$.

\begin{lem}\label{lem:f-chain-homotopy}
The chain maps $f^\spinc$ and $\taub\circ \ovl{f}^\spinc$ are chain homotopic.
\end{lem}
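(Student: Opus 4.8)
The plan is to recognize Lemma~\ref{lem:f-chain-homotopy} as an instance of basepoint invariance for a triangle map, and to establish it by the usual degeneration argument. Both $f^\spinc$ and $\ovl{f}^\spinc$ are assembled from holomorphic triangles for the Heegaard triple $(\Sig,\alphas,\betas_{n+m},\betas)$ with the distinguished vertex $\Theta_f$ (in the notation for $\ovl{f}^\spinc$ this vertex --- a canonical top generator of $\ov\CFT(\Sig,\betas_{n+m},\betas;\cdot)$ identified with $\Theta_f$ --- is suppressed); the only difference is that $f^\spinc$ counts the triangles with $n_u=n_v=0$, whereas $\ovl{f}^\spinc$ counts those with $n_s=n_t=0$ and lands in $\ov\CFT(\Sig,\alphas,\betas;s)$, which is carried to $\ov\CFT(Y)=\ov\CFT(\Sig,\alphas,\betas;u)$ by $\taub$. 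By the choice of $s$ and $t$ made just above the statement, the arc joining $s$ to $z$ and the arc joining $t$ to $v$ each cross $\beta_g$ in one transverse point and are disjoint from every curve of $\alphas\cup\betas_{n+m}$. Consequently, relative to $\alphas$ and to $\betas_{n+m}$ the pairs $(u,v)$ and $(s,t)$ lie in the same connected regions --- so the common domain $\ov\CFKT(K_{n+m},\spinc)\oplus\ov\CFKT(K_{n+m},\spinc+m)$ and its $\SpinC$-grading are unchanged --- while relative to $\betas$ the passage $(u,v)\rightsquigarrow(s,t)$ amounts to moving the basepoint across the single curve $\beta_g$, whose effect on $\ov\CFT(\Sig,\alphas,\betas;\cdot)$ is exactly $\taub$.

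Granting this, I would produce the chain homotopy by the standard moving-basepoint construction. Fix a generic path of marking pairs from $(u,v)$ to $(s,t)$ that drags $u$ across $\beta_g$ to $s$ and $v$ across $\beta_g$ to $t$ without meeting any other attaching curve, and let $H^\spinc$ count, with sign, the holomorphic triangles in $\pi_2(\x,\Theta_f,\y)$ of the appropriate Maslov index constrained to pass through a point of the codimension-one locus swept out by this path --- equivalently, weight each rigid triangle $\Delta\in\pi_2^0(\x,\Theta_f,\y)$ by the signed count of times $\Delta$ covers the thin regions pushed across $\beta_g$. The ends of the resulting one-dimensional moduli spaces are of the familiar types --- a holomorphic disk splitting off along $\Ta$, $\mathbb{T}_{\beta_{n+m}}$ or $\Tb$, a subtriangle degenerating at the $\Theta_f$ corner, or one of the two endpoints of the parameter interval --- and summing the contributions yields
$$d\circ H^\spinc+H^\spinc\circ d \;=\; f^\spinc-\bigl(\text{the $(s,t)$-count}\bigr).$$
Identifying the $(s,t)$-count with $\taub\circ\ovl{f}^\spinc$ --- directly, since $\taub$ is by construction the equivalence induced by moving the basepoint across $\beta_g$, or, if one prefers $\taub$ in its Heegaard-move form, by passing to the quadruple $(\Sig,\alphas,\betas_{n+m},\betas,\betas^\dagger)$ and invoking the associativity (square-count) relation for polygon maps --- then gives the asserted homotopy $f^\spinc\simeq\taub\circ\ovl{f}^\spinc$. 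This is precisely the pattern already used throughout the proof of Theorem~\ref{thm:mapping-cone} and in Section 8 of \cite{AE}.

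The substantive part is not the homotopy machinery, which is routine, but the bookkeeping that makes it run: one must verify that the path of markings can be taken to cross only $\beta_g$, so that at every stage the intermediate multi-diagram is admissible, the moduli counts are finite, the $\SpinC$-grading on the $\betas_{n+m}$-side (hence the splitting into the $\spinc$ and $\spinc+m$ summands) is preserved, and $\Theta_f$ remains the correct distinguished vertex. All of this is supplied by the defining properties of $s$ and $t$ recorded above the statement, together with the large-$m$ hypothesis already in force, which separates the relevant relative $\SpinC$ classes; the proof is then a matter of assembling these observations in the standard way rather than of importing any new geometric input.
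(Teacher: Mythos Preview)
Your proposal takes a different route from the paper and contains a genuine gap. The map $\taub$ is \emph{not} ``by construction the equivalence induced by moving the basepoint across $\beta_g$''; in the paper it is defined as the chain homotopy equivalence coming from the Heegaard moves carrying $(\Sig,\alphas,\betas;s)$ to $(\Sig,\alphas,\betas;u)$ --- concretely, a composition of $2g-2$ handle-slide triangle maps. Consequently your one-parameter homotopy $H^\spinc$ does not type-check: at the instant the path crosses $\beta_g$ the target chain complex jumps from $\ov\CFT(\Sig,\alphas,\betas;u)$ to $\ov\CFT(\Sig,\alphas,\betas;s)$ (same generators, different differential), so the relation $dH^\spinc+H^\spinc d=f^\spinc-(\text{$(s,t)$-count})$ equates maps into two different complexes. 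Bridging them is exactly the job of $\taub$, and that is what has to be established, not invoked. Making the basepoint-pushing picture rigorous would require an independent identification of the Sarkar-type basepoint-moving map with the Heegaard-move equivalence $\taub$, which is a separate naturality statement you have not supplied.

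The paper circumvents this by never moving the basepoint. It fixes the pair $(s,t)$ throughout and instead moves the $\betas$-curves: writing $\taub=\taub^{2g-2}\circ\cdots\circ\taub^1$ along a sequence $\betas=\betas^0,\betas^1,\ldots,\betas^{2g-2}$ of handle slides supported away from $s,t$, it lets $f^i$ be the triangle map for $(\Sig,\alphas,\betas_{n+m},\betas^i;s,t)$ and runs a square count on the quadruple $(\Sig,\alphas,\betas_{n+m},\betas^{i-1},\betas^i;s,t)$ to get $H^i$ with $dH^i+H^id=f^i+\taub^i\circ f^{i-1}$. The telescoping sum $H=\sum_i(\taub^{2g-2}\circ\cdots\circ\taub^{i+1})\circ H^i$ then yields $dH+Hd=f^{2g-2}+\taub\circ f^0$, after which one identifies $f^{2g-2}$ with $f^\spinc$ (via $\ov\CFT(\alphas,\betas^{2g-2};s)\cong\ov\CFT(\alphas,\betas;u)$) and $f^0$ with $\ovl f^\spinc$. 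Your parenthetical fallback to ``passing to the quadruple $(\Sig,\alphas,\betas_{n+m},\betas,\betas^\dagger)$'' points in this direction but is not enough: since $\taub$ is a composite of $2g-2$ triangle maps rather than a single one, a single associativity square does not produce the required homotopy --- one needs a square for each factor and then the telescoping argument above.
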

\begin{proof}
Note that the aforementioned Heegaard 
moves consist of $2g-2$ handle slides (composed with isotopies) 
on $\betas$, supported away from the markings $s,t$. Denote the corresponding
$g$-tuples of curves by $\betas^0=\betas,\betas^1,...,\betas^{2g-2}$, where 
$\ov\CFT(\alphas,\betas^{2g-2};s)$ may be identified with  $\ov\CFT(\Sig,\alphas,\betas;u)$.\\
The Heegaard triple $(\Sig,\alphas,\betas^{i-1},\betas^i;s)$ and the top generator
$\Theta^i$ of $(\Sig,\betas^{i-1},\betas^i;s,t)$ determine a chain map
$$\taub^i:\ov\CFT(\alphas,\betas^{i-1};s)\lra \ov\CFT(\alphas,\betas^i;s).$$
The Heegaard triple $(\Sig,\alphas,\betas_{n+m},\betas^i;s,t)$ together
with the top generator $\Theta_f^{i}$ of $(\Sig,\betas_{n+m},\betas^{i};s,t)$
 determines a chain map
 $$f^i:\ov\CFT(\alphas,\betas_{n+m};s,t)\lra \ov\CFT(\alphas,\betas^i;s).$$
Finally, the Heegaard quadruple $(\Sig,\alphas,\betas_{n+m},\betas^{i-1},\betas^i;s,t)$
 together with $\Theta_f^{i-1}$ and $\Theta^i$, determines a homomorphism
\begin{displaymath}
\begin{split}
H^i:\ov\CFT(\Sig,\alphas,\betas_{n+m};s,t)\lra \ov\CFT(\Sig,\alphas,\betas^i;s).
\end{split}
\end{displaymath}

Considering different boundary degenerations of the one-dimensional moduli space 
associated with a square class of index $0$ we find
\begin{equation}\label{eq:H-i}
d\circ H^i+H^i\circ d=f^i+\taub^i\circ f^{i-1},\ \ \ \ i=1,...,2g-2.
\end{equation}  
Let us define
\begin{displaymath}
H=H^{2g-2}+\taub^{2g-2}\circ H^{2g-3}+\taub^{2g-2}\circ \taub^{2g-3} H^{2g-4}+
\dots + (\taub^{2g-2}\circ \dots \taub^2)\circ H^1.
\end{displaymath}
Using (\ref{eq:H-i}) we find
\begin{displaymath}
d\circ H+H\circ d=f^{2g-2}+\taub\circ f^0,\ \ \ \text{where}\ 
\taub=\taub^{2g-2}\circ\dots\taub^1.
\end{displaymath}
Restricting the above equation to 
$\ov\CFKT(K_{n+m},\spinc)\oplus\ov\CFKT(K_{n+m},\spinc+m)$ we are done, once we note 
that $f^\spinc$ is the restriction of $f^{2g-2}$ and $\ovl{f}^\spinc$ is the restriction 
of $f^0$.
\end{proof}

%\newpage
\section{The homomorphisms in the surgery triangle}\label{sec:naturality}
Consider the triply punctured Heegaard $5$-tuple 
$(\Sig,\alphas,\betas_0,\betas_1,\betas_m,\betas;u,v,w)$,
as before, and assume that the local picture around the  curves 
$\lambda_0,\lambda_1,\lambda_m,\lambda_\infty$ is the one illustrated in 
Figure~\ref{fig:pentagon}.
The top generators $\Theta_{0,1}$, $\Theta_{g_1}$ and $\Theta_{f_1}$ 
of the Heegaard diagrams $(\Sig,\betas_0,\betas_1;u,v,w)$,
$(\Sig,\betas_1,\betas_m;u,v,w)$ and 
$(\Sig,\betas_m,\betas;u,v,w)$ (respectively) determine 
the holomorphic pentagon map
\begin{displaymath}
\begin{split}
&P^\spinc:\ov\CFKT(K_0,\spinc)\lra \ov\CFT(Y)\\
&P^\spinc(\x):=\sum_{\y\in\Ta\cap\Tb}\sum_{\substack{
\pentagon\in\pi_2^{-2}(\x,\Theta_{0,1},\Theta_{g_1},\Theta_{f_1},\y;u,v,w)}}
\#\left(\Mod(\pentagon)\right).\y
\end{split}
\end{displaymath} 
Every pentagon class 
$\pentagon\in\pi_2^{-1}(\x,\Theta_{0,1},\Theta_{g_1},\Theta_{f_1},\y;u,v,w)$
corresponds to a $1$-dimensional moduli space with boundary. The boundary points 
are in correspondence with  the degeneration of the domain of 
$\pentagon$ into two parts. Since the generators  $\Theta_{0,1},\Theta_{g_1}$ and 
$\Theta_{f_1}$ are closed, the degenerations into a bi-gon and a pentagon 
correspond to the the coefficient of $\y$ in $(d\circ P^\spinc+P^\spin\circ d)(\x)$. 
The remaining  degenerations are the  degenerations 
$\pentagon=\square\star \Delta$ 
to a triangle $\Delta$ with Maslov index $0$ and a square $\square$
with Maslov index $-1$ which miss $u,v$ and $w$. The possibilities are
\begin{itemize}
\item[(1)] %Degenerations with 
$\square\in\pi_2(\z,\Theta_{g_1},\Theta_{f_1},\y)$ and
$\Delta\in\pi_2(\x,\Theta_{0,1},\z)$,
\item[(2)] %Degenerations with 
$\square\in\pi_2(\x,\Theta_{0,1},\Theta_{g_1},\z)$ and
$\Delta\in\pi_2(\z,\Theta_{f_1},\y)$,
\item[(3)] %Degenerations with 
$\square\in\pi_2(\x,\Theta_{0,1},\Theta,\y)$ and
$\Delta\in\pi_2(\Theta_{g_1},\Theta_{f_1},\Theta)$,
\item[(4)] %Degenerations with 
$\square\in\pi_2(\x,\Theta,\Theta_{f_1},\y)$ and
$\Delta\in\pi_2(\Theta_{0,1},\Theta_{g_1},\Theta)$,
\item[(5)] %Degenerations with 
$\square\in\pi_2(\Theta_{0,1},\Theta_{g_1},\Theta_{f_1},\Theta)$ and 
$\Delta\in\pi_2(\x,\Theta,\y)$.
\end{itemize}
First type degenerations  correspond to the coefficient of $\y$ in 
$(H_{h_1}^\spinc\circ \Ff_\infty^\spinc)(\x)$, where 
$$\Ff_\infty^\spinc:\ov\CFKT(K_0,\spinc)\lra \ov\CFKT(K_1,\spinc)$$ 
is a chain map so that the induced map  
$\Ff_\infty^\spinc:\Hbb_0(K,\spinc)\ra \Hbb_1(K,\spinc)$ happens to be the 
homomorphism which appears in the splicing formula of \cite{Ef-splicing}. 
Degenerations of 
type $2$ correspond to the coefficient of $\y$ in $f_1^\spinc\circ H^\spinc(\x)$, 
where $H^\spinc$ is defined by
\begin{displaymath}
\begin{split}
&H^\spinc:\ov\CFKT(K_0,\spinc)\lra \ov\CFKT(K_m,\spinc)\oplus
\ov\CFKT(K_m,\spinc+m-1)\\
&H^\spinc(\x)=\sum_{\z\in\Ta\cap\mathbb{T}_{\beta_m}}\sum_{\substack{
\square\in\pi_2^{-1}(\x,\Theta_{0,1},\Theta_{g_1},\z;u,v,w)}}\#\left(\Mod(\square)\right).\z.
\end{split}
\end{displaymath}
In a degeneration of type $3$, the contributing triangle classes $\Delta$ come 
in canceling pairs. The total (signed) count of such degenerations is thus trivial. 
Furthermore, there are no holomorphic representatives for the square classes 
which appear in the boundary degenerations of type $5$, i.e. we may assume that 
there are no such degenerations.
In a degeneration of type $4$, the moduli space corresponding to $\Delta$ is trivial 
unless $\Theta=\Theta_{g_0}$ and $\Delta$ corresponds to the union of small 
triangles connecting $\Theta_{0,1},\Theta_{g_1}$ and $\Theta_{g_0}$. In this 
latter case the signed contribution of such triangles is $1$. The signed count of 
such boundary degenerations is thus equal to 
\begin{displaymath}
\sum_{\y\in\Ta\cap\Tb}\sum_{\substack{\square\in\pi_2^{-1}
(\x,\Theta_{g_0},\Theta_{f_1},\y;u,v,w)}}
\#\left(\Mod(\square)\right).\y=H_{h_0}^\spinc(\x).
\end{displaymath}
Summarizing the above observations we arrive at the following.

\begin{lem}\label{lem:pentagon}
With the above notation fixed
\begin{equation}\label{eq:pentagon}
d\circ P^\spinc+P^\spinc\circ d+H_{h_0}^\spinc=f_1^\spinc\circ H^\spinc
-H_{h_1}^\spinc\circ \Ff_\infty^\spinc.
\end{equation}
\end{lem}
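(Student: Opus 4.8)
The plan is to run the standard degeneration argument for the one-dimensional moduli space of index $-1$ pentagons --- which is precisely the analysis sketched in the paragraphs preceding the lemma --- and to organize the five boundary-degeneration types into the claimed identity. In effect the lemma is a bookkeeping consequence of that discussion.

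First I would fix $\spinc$ and form the moduli space $\bigsqcup_{\y\in\Ta\cap\Tb}\Mod(\pentagon)$, the union being over all classes $\pentagon\in\pi_2^{-1}(\x,\Theta_{0,1},\Theta_{g_1},\Theta_{f_1},\y;u,v,w)$. By Gromov compactness together with the usual gluing theorem this is a compact one-manifold, so its signed count of boundary points is zero. Its ends fall into two families: either a bigon splits off at the $\x$ or the $\y$ corner --- and, because the three interior inputs $\Theta_{0,1},\Theta_{g_1},\Theta_{f_1}$ are top generators, hence cycles, these ends assemble exactly into the coefficient of $\y$ in $(d\circ P^\spinc+P^\spinc\circ d)(\x)$ --- or the domain splits as $\pentagon=\square\star\Delta$ with $\Delta$ a triangle of Maslov index $0$, $\square$ a square of Maslov index $-1$, and neither piece meeting $u,v,w$. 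The latter splittings are of the five combinatorial types (1)--(5) listed above, according to which of the five vertices $\Delta$ and $\square$ share.

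Next I would identify each type. Types (1) and (2) contribute $H_{h_1}^\spinc\circ\Ff_\infty^\spinc$ and $f_1^\spinc\circ H^\spinc$ respectively; here I would invoke that $\Ff_\infty^\spinc$ is, by its definition, the triangle map associated with the triple $(\Sig,\alphas,\betas_0,\betas_1;u,v,w)$ and top input $\Theta_{0,1}$ (the homomorphism of \cite{Ef-splicing}), and that $H^\spinc$ and $H_{h_1}^\spinc$ are the square maps already introduced. Type (3) has $\Delta\in\pi_2(\Theta_{g_1},\Theta_{f_1},\Theta)$ a triangle among top generators; such classes occur in canceling pairs, so this type contributes $0$. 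In type (4) the triangle $\Delta\in\pi_2(\Theta_{0,1},\Theta_{g_1},\Theta)$ is trivial unless $\Theta=\Theta_{g_0}$, the top generator of $(\Sig,\betas_0,\betas_m;u,v,w)$, and $\Delta$ is the union of small triangles, which has a unique holomorphic representative counted with sign $+1$; the accompanying square $\square\in\pi_2(\x,\Theta_{g_0},\Theta_{f_1},\y)$ then counts exactly $H_{h_0}^\spinc$. Type (5) would require a holomorphic square in $\pi_2^{-1}(\Theta_{0,1},\Theta_{g_1},\Theta_{f_1},\Theta)$, which an area/positivity argument specific to the local arrangement of $\lambda_0,\lambda_1,\lambda_m,\lambda_\infty$ in Figure~\ref{fig:pentagon} rules out. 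Setting the total signed count of ends equal to zero and working over $\Fbb=\Z/2\Z$ then yields $(d\circ P^\spinc+P^\spinc\circ d)+H_{h_1}^\spinc\circ\Ff_\infty^\spinc+H_{h_0}^\spinc+f_1^\spinc\circ H^\spinc=0$, which is (\ref{eq:pentagon}).

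The main obstacle is the treatment of types (4) and (5). For (4) one must verify that the area constraints genuinely force $\Theta=\Theta_{g_0}$ and that the "small triangle" class contributes with multiplicity one; for (5) one must verify the non-existence of an index $-1$ holomorphic square among the four $\betas$-type tori, and this is the point where the precise positioning of the $\lambda$-curves and of the three markings $u,v,w$ in Figure~\ref{fig:pentagon} is essential. Both are of the kind routinely handled in the surgery-exact-triangle literature (cf. \cite{AE}, \cite{OS-surgery}), so beyond this bookkeeping no new analytic input is required.
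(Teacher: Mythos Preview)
Your proposal is correct and follows essentially the same approach as the paper: the lemma is precisely the bookkeeping summary of the degeneration analysis in the paragraphs immediately preceding it, and you have organized the five triangle--square degeneration types and the bigon ends exactly as the paper does. Your identification of types (4) and (5) as the delicate points, handled via the local configuration of Figure~\ref{fig:pentagon}, matches the paper's treatment.
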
 

\begin{figure}[ht]
\def\svgwidth{10cm}
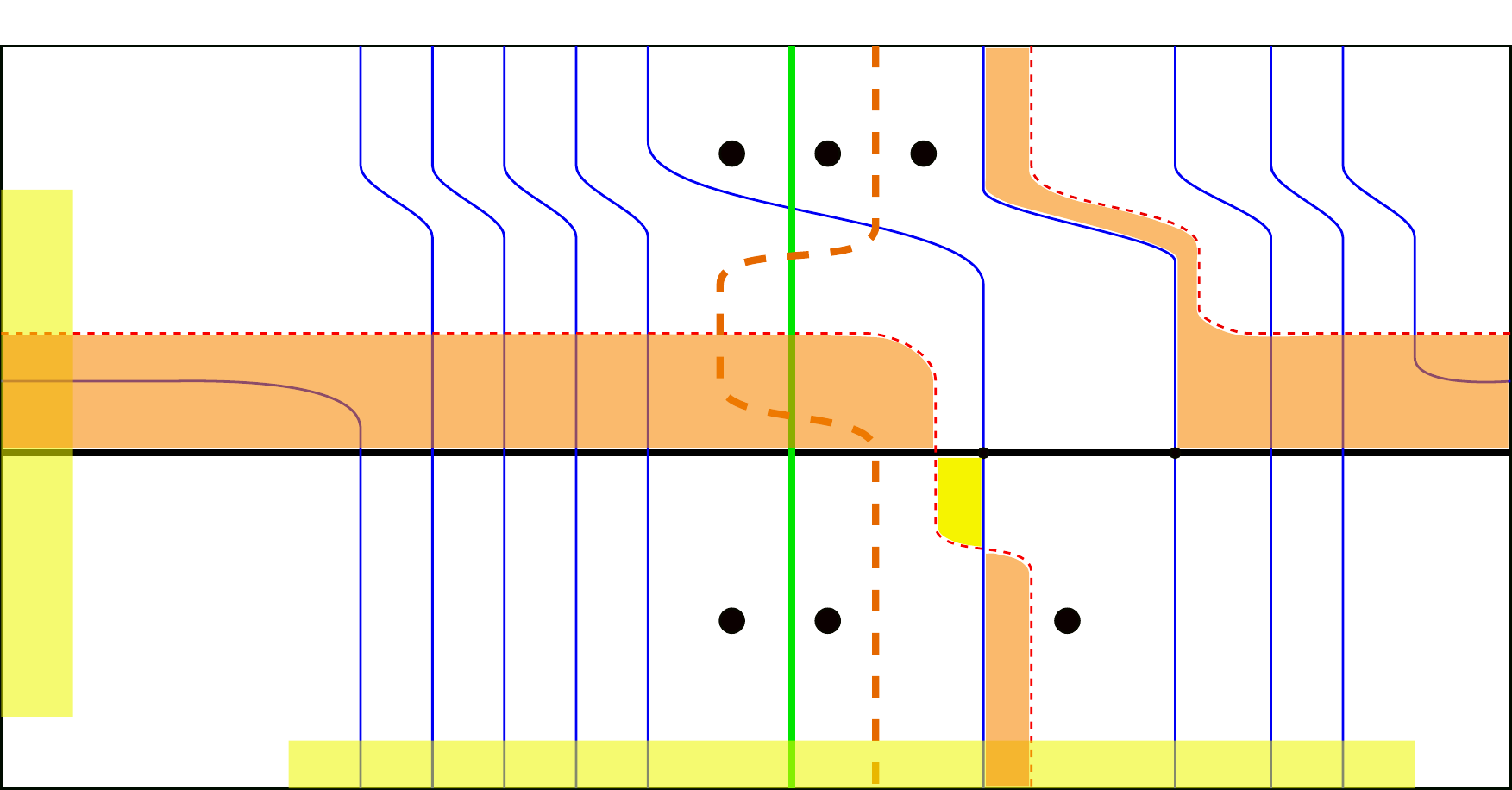
\caption{The arrangement of the curves on the Heegaard surface. Other curves and handles 
appear on the shaded yellow area. }\label{fig:pentagon}
\end{figure}  

Next, we analyse the map $H^\spinc$ via degenerations of holomorphic squares.
For a square class 
$\square\in\pi_2^0(\x,\Theta_{0,1},\Theta_{g_1},\y;u,v,w)$
the moduli space $\Mod(\square)$ is one dimensional, and has $6$ types of boundary 
ends, corresponding to the degenerations of the domain. Since $\Theta_{0,1}$ and 
$\Theta_{g_1}$ are closed, the $4$ types of degenerations of the square class
to a square and a bi-gon correspond to the coefficient of $\y$ in 
$(d\circ H^\spinc+H^\spinc\circ d)(\x)$. The remaining boundary ends correspond 
to a degeneration of $\square$ to a pair of triangle classes. The degenerations 
$\square=\Delta'\star\Delta$ with $\Delta\in\pi_2(\x,\Theta_{0,1},\z)$ and 
$\Delta'\in\pi_2(\z,\Theta_{g_1},\y)$ correspond to the coefficient of 
$\y$ in $(g_1^\spinc\circ \Ff_\infty^\spinc)(\x)$.\\

 The more tricky and interesting part is the contribution of the boundary ends which 
 correspond to the degenerations of the form 
 $\square=\Delta'\star\Delta$ with 
 $$\Delta'\in\pi_2^0(\Theta_{0,1},\Theta_{g_1},\Theta;u,v,w)\ \ \text{and}\ \ 
\Delta\in\pi_2^0(\x,\Theta,\y;u,v,w). $$
 There are precisely two generators $\Theta$ such that there is a corresponding 
 $\Delta'=\Delta_\Theta$ associated with them such that $\Mod(\Delta_\Theta)$ 
 is non-empty. One of these classes corresponds to $\Theta=\Theta_{g_0}$ and 
 the other one corresponds to the generator $\Theta_{g_0}'$ which is 
 obtained from $\Theta_{g_0}$ by changing $q$ to the intersection point 
 $q'\in \lambda_0\cap\lambda_m$ which is next to $q$ (see Figure~\ref{fig:pentagon}). 
 The total  contribution of such boundary ends is thus 
 \begin{displaymath}
 \begin{split}
 &
 \sum_{\substack {\y\in\Ta\cap\mathbb{T}_{\beta_m}\\
 \Delta\in\pi_2^0(\x,\Theta_{g_0},\y;u,v,w)}}\#\left(\Mod(\Delta)\right).\y+
\sum_{\substack{\y\in\Ta\cap\mathbb{T}_{\beta_m}\\
 \Delta\in\pi_2^0(\x,\Theta_{g_0}',\y;u,v,w)}}\#\left(\Mod(\Delta)\right).\y.
 \end{split}
\end{displaymath}

Let $g_0^\spinc(\x)=(g_{0,1}^\spinc(\x),g_{0,2}^\spinc(\x))$ denotes the decomposition 
of $g_0^\spinc$ in $$\ov\CFKT(K_m,\spinc)\oplus\ov\CFKT(K_m,\spinc+m).$$ Then the 
above sum is equal to $g_{0,1}^\spinc(\x)+G^\spinc(\x)$, where  
$$G^\spinc:\ov\CFKT(K_0,\spinc)\lra \ov\CFKT(K_m,\spinc+m-1)$$
is defined by the second sum above.
In  Section~\ref{sec:splicing} we show that for sufficiently large  
$m$ and an appropriate Heegaard $5$-tuple we may assume 
that there is an embedding 
$$J^\spinc:\ov\CFKT(K_m,\spinc+m)\lra \ov\CFKT(K_m,\spinc+m-1)$$
such that $G^\spinc(\x)=J^\spinc(g_{0,2}^\spinc(\x))$. Set
\begin{displaymath}
\begin{split}
&G_\infty^\spinc:\bigoplus_{\spinct\in\{\spinc,\spinc+m\}}
\ov\CFKT(K_m,\spinct)\lra \bigoplus_{\spinct\in\{\spinc,\spinc+m-1\}}
\ov\CFKT(K_m,\spinct),\\
&G_\infty^\spinc(\x_1,\x_2):=(\x_1,J^\spinc(\x_2)).
\end{split}
\end{displaymath}
The above observations imply the following.
\begin{lem}\label{lem:square}
With the above notation fixed we have
\begin{equation}\label{eq:square}
d\circ H^\spinc+H^\spinc\circ d=g_1^\spinc\circ 
\Ff_\infty^\spinc-G_\infty^\spinc\circ g_0^\spinc.
\end{equation}
\end{lem}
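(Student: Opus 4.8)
The plan is to run the usual Gromov compactness and gluing argument for holomorphic squares, counting the ends of the one-dimensional moduli space associated with the index-zero square classes $\square\in\pi_2^0(\x,\Theta_{0,1},\Theta_{g_1},\y;u,v,w)$ and reading off each end as a term of the claimed identity. Fix $\x\in\Ta\cap\mathbb{T}_{\beta_0}$ and $\y\in\Ta\cap\mathbb{T}_{\beta_m}$; the total number of boundary points of $\coprod_\square\Mod(\square)$, summed over all index-zero classes with the prescribed intersection behaviour with $u,v,w$, is zero, and the six combinatorial types of broken configuration partition this count. The bulk of the proof is therefore just the systematic enumeration that is already sketched in the paragraph preceding the lemma; I would simply make it precise and keep track of signs.

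First I would isolate the four ``square-with-bigon'' degenerations. Since $\Theta_{0,1}$ and $\Theta_{g_1}$ are cycles in the relevant Floer complexes, a bigon can split off only at $\x$ or at $\y$, on the $\Ta$ side or on one of the two $\mathbb{T}_\beta$ sides; gluing these four families assembles exactly into the coefficient of $\y$ in $(d\circ H^\spinc+H^\spinc\circ d)(\x)$, i.e.\ the left-hand side of \eqref{eq:square}. Next I would treat the ``triangle-with-triangle'' splittings $\square=\Delta'\star\Delta$. When $\Delta\in\pi_2^0(\x,\Theta_{0,1},\z)$ and $\Delta'\in\pi_2^0(\z,\Theta_{g_1},\y)$, the composition of the two index-zero triangle counts is, by the very definitions of the maps, $g_1^\spinc\circ\Ff_\infty^\spinc$ (the triangle map through $\Theta_{0,1}$ followed by the one through $\Theta_{g_1}$), producing the first term on the right-hand side.

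The remaining, and main, case is the degeneration $\square=\Delta'\star\Delta$ with $\Delta'\in\pi_2^0(\Theta_{0,1},\Theta_{g_1},\Theta;u,v,w)$ a ``small'' triangle among the $\beta$-type tori and $\Delta\in\pi_2^0(\x,\Theta,\y;u,v,w)$. The hard part will be proving that only two intersection points $\Theta$ admit a holomorphic $\Delta_\Theta$, namely $\Theta_{g_0}$ and the point $\Theta_{g_0}'$ obtained by replacing the corner $q$ with the adjacent corner $q'\in\lambda_0\cap\lambda_m$; this requires the same local analysis of the domain near $\lambda_0,\lambda_1,\lambda_m$ (Figure~\ref{fig:pentagon}) that was carried out for the analogous step in Lemma~\ref{lem:pentagon}, together with the observation that the signed count of such small triangles is $1$. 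Granting this, the $\Theta_{g_0}$-end contributes $g_{0,1}^\spinc(\x)$ and the $\Theta_{g_0}'$-end contributes the map $G^\spinc(\x)\in\ov\CFKT(K_m,\spinc+m-1)$; invoking the identification $G^\spinc(\x)=J^\spinc(g_{0,2}^\spinc(\x))$ established in Section~\ref{sec:splicing}, these two terms together are $G_\infty^\spinc\circ g_0^\spinc$ applied to $\x$. Collecting all six families of ends, and checking that the relative orientations make the triangle-triangle terms appear with the sign shown, gives \eqref{eq:square}. The only genuinely nontrivial points are the ``exactly two $\Theta$'' claim and the sign bookkeeping; everything else is the standard degeneration dictionary.
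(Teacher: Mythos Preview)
Your proposal is correct and follows exactly the approach in the paper: the paper's proof is precisely the discussion preceding the lemma, which enumerates the six types of ends of the index-zero square moduli space, identifies the four bigon-square ends with $d\circ H^\spinc+H^\spinc\circ d$, one triangle-triangle end with $g_1^\spinc\circ\Ff_\infty^\spinc$, and the remaining triangle-triangle end (via the two generators $\Theta_{g_0}$ and $\Theta_{g_0}'$) with $G_\infty^\spinc\circ g_0^\spinc$. You have correctly singled out the only genuinely delicate point, namely that exactly two $\Theta$'s contribute in the last degeneration and that the $\Theta_{g_0}'$-contribution factors as $J^\spinc\circ g_{0,2}^\spinc$.
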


Let us now consider the Heegaard $5$-tuple
$\Hcal=(\Sig,\alphas,\betas_1,\betas_m,\betas,\betas';u,v,z)$
where the curves in $\betas'=\{\beta_1',...,\beta_g'\}$ are small Hamiltonian isotopes 
of the corresponding curves in $\betas$. Moreover, we assume that the intersection pattern
between $\lambda_1,\lambda_m,\lambda_\infty=\beta_g$ and $\lambda_\infty'=\beta_g'$
and the location of $u,v$ and $z$ follows the pattern illustrated in 
Figure~\ref{fig:pentagon}. Using the punctured Heegaard diagram 
$(\Sig,\alphas,\betas';u,v,z)$ we may form the chain complex associated 
with $K$ and $\spinc\in\RelSpinC(Y,K)$. We will thus denote this latter 
chain complex by $\ov\CFKT(K,\spinc)$. Associated 
with the Heegaard diagram  $(\Sig,\betas,\betas';u,v,z)$ there is a {\emph{top generator}}
which may be denoted by $\Theta_\infty'$. Unlike most of such situations $\Theta_\infty'$ 
is not closed and 
$d(\Theta_\infty')=\Theta_\infty$  is the generator which is obtained from $\Theta_\infty'$ 
by changing the choice of intersection point in $\lambda_\infty\cap\lambda_\infty'$.
By construction, $\Theta_\infty$ is closed.
The diagram $\Hcal$ defines a pentagon map
\begin{displaymath}
\begin{split}
&Q^\spinc:\ov\CFKT(K_1,\spinc)\lra \ov\CFKT(K,\spinc),\\
&Q^\spinc(\x)=\sum_{\substack{\y\in\Ta\cap\mathbb{T}_{\beta'},\ \spinc(\y)=\spinc\\ 
\pentagon\in\pi_2^{-2}(\x,\Theta_{g_1},\Theta_{f_1},\Theta_\infty,\y;u,v,z)
}}
\#\left(\Mod(\pentagon)\right).\y.
\end{split}
\end{displaymath}
For $\pentagon\in\pi_2^{-1}(\x,\Theta_{g_1},\Theta_{f_1},\Theta_\infty,\y;u,v,z)$
consider the ends of the $1$-dimensional moduli space $\Mod(\pentagon)$,
which  correspond to the degenerations of the 
pentagon either to a bi-gon and a pentagon, or to a triangle and a square. There are 
five types of degenerations of each one of these two forms. Since $\Theta_{g_1},
\Theta_{f_1}$ and $\Theta_\infty$ are closed, the first five types of degenerations 
correspond to the coefficient of $\y$ in $(d\circ Q^\spinc+Q^\spinc\circ d)(\x)$. 
Other degeneration are of the  form $\pentagon=\square\star \Delta$ of
 one of the following $5$ types:
\begin{itemize}
\item[(1)] %Degenerations with 
$\square\in\pi_2(\z,\Theta_{f_1},\Theta_{\infty},\y)$ and
$\Delta\in\pi_2(\x,\Theta_{g_1},\z)$,
\item[(2)] %Degenerations with 
$\square\in\pi_2(\x,\Theta_{g_1},\Theta_{f_1},\z)$ and
$\Delta\in\pi_2(\z,\Theta_{\infty},\y)$,
\item[(3)] %Degenerations with 
$\square\in\pi_2(\x,\Theta_{g_1},\Theta,\y)$ and 
$\Delta\in\pi_2(\Theta_{f_1},\Theta_{\infty},\Theta)$,
\item[(4)] %Degenerations with 
$\square\in\pi_2(\x,\Theta,\Theta_{\infty},\y)$ and 
$\Delta\in\pi_2(\Theta_{g_1},\Theta_{f_1},\Theta)$,
\item[(5)] %Degenerations with 
$\square\in\pi_2(\Theta_{g_1},\Theta_{f_1},\Theta_{\infty},\Theta)$  and
$\Delta\in\pi_2(\x,\Theta,\y)$.
\end{itemize}
The first type in the above list corresponds to the coefficient of $\y$ in the expression
$(I^\spinc\circ g_1^\spinc)(\x)$, where $I^\spinc$ is defined by
\begin{displaymath}
I^\spinc(\z)=\sum_{\substack{\y\in\Ta\cap\mathbb{T}_{\beta'},\ \spinc(\y)=\spinc\\
\square\in \pi_2^{-1}(\z,\Theta_{f_1},\Theta_{\infty},\y;u,v,z)}}
\#\left(\Mod(\square)\right).\y.
\end{displaymath}
The second type in the above list corresponds to the coefficient of $\y$ in 
$(X^\spinc\circ H_{h_1}^\spinc)(\x)$, where 
$X^\spinc$ is defined by 
\begin{displaymath}
X^\spinc(\z)=\sum_{\substack{\y\in\Ta\cap\mathbb{T}_{\beta'},\ \spinc(\y)=\spinc\\
\Delta\in \pi_2^{0}(\z,\Theta_{\infty},\y;u,v,z)}}
\#\left(\Mod(\Delta)\right).\y.
\end{displaymath}
Considering the local multiplicities around $\lambda_\infty\cap\lambda_\infty'$
one may conclude that there are no triangle classes 
$\Delta\in\pi_2^0(\z,\Theta_\infty,\y;u,v,z)$ with positive domain. 
In particular,  $X^\spinc$ is trivial.
There are no triangle classes which contribute in the degenerations of the type (3).
The triangles which contribute in degenerations of type (4) come in canceling 
pairs. Thus the total number of boundary ends corresponding 
to degenerations of types (3) and (4) is zero.
There is a unique square class 
$$\square\in\pi_2^{-1}(\Theta_{g_1},\Theta_{f_1},\Theta_{\infty},\Theta;u,v,z)$$
with non-trivial contribution to the degenerations of type (5). For this square 
class $\Theta\in\mathbb{T}_{\beta_1}\cap\mathbb{T}_{\beta'}$ is the top generator,
and the  class $\square$ has a unique holomorphic representative. 
The top generator $\Theta$ may be used to define 
$$\Ff_0^\spinc:\ov\CFKT(K_1,\spinc)\lra \ov\CFKT(K,\spinc).$$
The contribution 
of the degenerations of type $5$ thus corresponds to the coefficient of $\y$ in $\Ff_0^\spinc(\x)$. 
The map on homology induced by  $\Ff_0^\spinc$ coincides with the map used 
in the splicing formula of \cite{Ef-splicing}.
\\

Define the maps $F_0^\spinc:M(f_1^\spinc)\ra \ov\CFKT(K,\spinc)$ and 
$F_\infty^\spinc:M(f_0^\spinc)\ra M(f_1^\spinc)$
by 
\begin{displaymath}
\begin{split}
&F_0^\spinc(\x_1,\x_2):=I^\spinc(\x_1),\ \ \ \ \ \ \ \ \ \ \ \ \ \ 
\text{where }\begin{cases}
\x_1\in \ov\CFKT(K_m,\spinc)\oplus\ov\CFKT(K_m,\spinc+m-1)\\
\x_2\in\ov\CFT(Y).\end{cases}\\
&F_\infty^\spinc(\x_1,\x_2):=(G_\infty^\spinc(\x_1),-\x_2),\ \ \text{where }\begin{cases}
\x_1\in \ov\CFKT(K_m,\spinc)\oplus\ov\CFKT(K_m,\spinc+m)\\
\x_2\in\ov\CFT(Y)\end{cases}
\end{split}
\end{displaymath}
With this notation fixed, the outcome of the above observations, together with 
Lemma~\ref{lem:pentagon} and Lemma~\ref{lem:square} is the following theorem.

\begin{thm}\label{thm:commutative-diagram}
With the above notation fixed, the following diagram is commutative, upto chain homotopy 
\begin{equation}\label{eq:commutative-diagram}
\begin{diagram}
\ov\CFKT(K_0,\spinc)&\rTo^{\Ff_\infty^\spinc}&\ov\CFKT(K_1,\spinc)&\rTo^{\Ff_0^\spinc}&
\ov\CFKT(K,\spinc)\\
\dTo{\imath_0^\spinc}&&\dTo{\imath_1^\spinc}&&\dTo{Id}\\
M(f_0^\spinc)&\rTo{F_\infty^\spinc}&M(f_1^\spinc)&\rTo{F_0^\spinc}&\ov\CFKT(K,\spinc)
\end{diagram}.
\end{equation}
\end{thm}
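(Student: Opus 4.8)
The plan is to prove commutativity of the two squares in~(\ref{eq:commutative-diagram}) separately, in each case by assembling homotopies that have already been built: Lemmas~\ref{lem:square} and~\ref{lem:pentagon} for the left square, and the pentagon $Q^\spinc$ of the present section for the right square. Since every complex is defined over $\Fbb=\Z/2\Z$, all signs disappear; in particular $F_\infty^\spinc(\x_1,\x_2)=(G_\infty^\spinc(\x_1),\x_2)$, $F_0^\spinc(\x_1,\x_2)=I^\spinc(\x_1)$, and $\imath_n^\spinc(\x)=(g_n^\spinc(\x),H_{h_n}^\spinc(\x))$ by Theorem~\ref{thm:mapping-cone}.

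For the left square I would take $\mathcal{K}^\spinc:=(H^\spinc,P^\spinc)\co\ov\CFKT(K_0,\spinc)\ra M(f_1^\spinc)$, where $H^\spinc$ lands in the summand of the mapping cone that is the domain of $f_1^\spinc$ and $P^\spinc$ lands in $\ov\CFT(Y)$. Feeding $\mathcal{K}^\spinc$ through the mapping-cone differential $(\x_1,\x_2)\mapsto(d\x_1,\,f_1^\spinc\x_1+d\x_2)$ and comparing componentwise with $\imath_1^\spinc\circ\Ff_\infty^\spinc+F_\infty^\spinc\circ\imath_0^\spinc$: the $\ov\CFKT(K_m,\cdot)$-component of $d\mathcal{K}^\spinc+\mathcal{K}^\spinc d$ is $dH^\spinc+H^\spinc d=g_1^\spinc\Ff_\infty^\spinc+G_\infty^\spinc g_0^\spinc$, which is Lemma~\ref{lem:square} and matches the first components of $\imath_1^\spinc\Ff_\infty^\spinc$ and $F_\infty^\spinc\imath_0^\spinc$; the $\ov\CFT(Y)$-component is $f_1^\spinc H^\spinc+dP^\spinc+P^\spinc d$, which by Lemma~\ref{lem:pentagon} equals $H_{h_1}^\spinc\Ff_\infty^\spinc+H_{h_0}^\spinc$, matching the second components. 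The point to highlight is that it is exactly the cross term $f_1^\spinc\circ H^\spinc$ appearing in Lemma~\ref{lem:pentagon} that glues $H^\spinc$ and $P^\spinc$ into a single homotopy into the cone; making this match work forces one to fix the mapping-cone differential convention once and for all, and to recall (or re-derive by the usual triangle/square count) that $F_\infty^\spinc$ and $F_0^\spinc$ are themselves chain maps for that convention.

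For the right square, both $\Ff_0^\spinc$ and $F_0^\spinc\circ\imath_1^\spinc=I^\spinc\circ g_1^\spinc$ are maps $\ov\CFKT(K_1,\spinc)\ra\ov\CFKT(K,\spinc)$, and I would take $Q^\spinc$ as the homotopy. The boundary count for the one-dimensional spaces $\Mod(\pentagon)$, $\pentagon\in\pi_2^{-1}(\x,\Theta_{g_1},\Theta_{f_1},\Theta_\infty,\y;u,v,z)$, produces $dQ^\spinc+Q^\spinc d$ from the bigon–pentagon ends, while among the square–triangle ends type~(1) contributes $I^\spinc\circ g_1^\spinc$, type~(5) contributes $\Ff_0^\spinc$, types~(3) and~(4) cancel in pairs, and type~(2) contributes $X^\spinc\circ H_{h_1}^\spinc=0$ because $X^\spinc$ vanishes (no positive triangle classes crossing $\lambda_\infty\cap\lambda_\infty'$). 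Hence $dQ^\spinc+Q^\spinc d=I^\spinc\circ g_1^\spinc+\Ff_0^\spinc=F_0^\spinc\circ\imath_1^\spinc+\Ff_0^\spinc$, as required.

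The main obstacle is not analytic: all of the Gromov-compactness and boundary-stratum classification lives in the lemmas already cited and in the degeneration discussion of this section, so the theorem is essentially an assembly statement. What must be done carefully is the organizational part — keeping the mapping-cone conventions on $M(f_0^\spinc)$ and $M(f_1^\spinc)$ consistent, tracking the two distinct $\SpinC$-shifts that $G_\infty^\spinc$ reconciles (the shift $m$ relating $K_0$ to $K_m$ versus the shift $m-1$ relating $K_1$ to $K_m$), and verifying that the off-diagonal $f_1^\spinc$-term of the cone differential is absorbed precisely by the cross term of Lemma~\ref{lem:pentagon}. Once those conventions are pinned down, the verification is a routine term-by-term comparison over $\Fbb$.
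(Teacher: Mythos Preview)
Your proposal is correct and follows essentially the same route as the paper: the paper's homotopy $R^\spinc(\x)=(-H^\spinc(\x),P^\spinc(\x))$ for the left square is your $\mathcal{K}^\spinc$ (signs being irrelevant over $\Fbb$), with Lemmas~\ref{lem:square} and~\ref{lem:pentagon} used componentwise exactly as you indicate; and the right square is handled in the paper by the same pentagon count for $Q^\spinc$, yielding $\Ff_0^\spinc-F_0^\spinc\circ\imath_1^\spinc=d\circ Q^\spinc+Q^\spinc\circ d$. One tiny sharpening: among the triangle--square ends for $Q^\spinc$, type~(3) contributes nothing because there are no contributing triangle classes at all, while it is only type~(4) whose triangles come in canceling pairs.
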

\begin{proof}
By the discussion preceding the theorem, we have
$$\Ff_0^\spinc-F_0^\spinc\circ \imath_1^\spinc=d\circ Q^\spinc+Q^\spinc\circ d.$$
This proves the commutativity of the right-hand-side square upto chain homotopy. 
To prove the commutativity of the left-hand-side square, define 
\begin{displaymath}
\begin{split}
&R^\spinc:\ov\CFKT(K_0,\spinc)\lra M(f_1^\spinc),\ \ \ 
R^\spinc(\x):=(-H^\spinc(\x),P^\spinc(\x)).
\end{split}
\end{displaymath} 
We thus find
\begin{displaymath}
\begin{split}
(d\circ R^\spinc+ &R^\spinc\circ d)(\x)
=d(-H^\spinc(\x),P^\spinc(\x))+(R^\spinc\circ d)(\x)\\
&=\big((G_\infty^\spinc\circ g_0^\spinc-g_1^\spinc\circ \Ff_\infty^\spinc)(\x),
(d\circ P^\spinc+P^\spinc\circ d-f_1^\spinc\circ H^\spinc)(\x)\big)\\
&=-\big((g_1^\spinc\circ \Ff_\infty^\spinc-G_\infty^\spinc\circ g_0^\spinc)(\x),
(H_{h_1}^\spinc\circ \Ff_\infty^\spinc+H_{h_0}^\spinc)(\x)\big)\\
&=(F_\infty^\spinc\circ \imath_0^\spinc-\imath_1^\spinc\circ \Ff_\infty^\spinc)(\x).
\end{split}
\end{displaymath}
The second equality follows from Lemma~\ref{lem:square}, while 
the third equality follows from Lemma~\ref{lem:pentagon}. This observation 
completes the proof of Theorem~\ref{thm:commutative-diagram}.
\end{proof}

%\newpage
\section{Surgery and splicing formulas for knots}\label{sec:splicing}
\subsection{Surgery formulas}
Theorem~\ref{thm:mapping-cone} implies that 
$\ov{\CFKT}(K_n,\spinc)$ is quasi-isomorphic, for $m$ sufficiently large, 
to the mapping cone of the chain map 
$$f_n^\spinc:\ov\CFKT(K_{n+m},\spinc)\oplus\ov\CFKT(K_{n+m},\spinc+m)\lra \ov\CFT(Y).$$
When the curve $\lambda_{n+m}$ is very close to the juxtaposition of $\lambda$ and 
$(n+m)\beta_g$, and it cuts $\beta_g$ almost in the middle of the winding region,
this mapping cone has a particularly easy description, which is described below.\\

With the above choice we may assume 
that associated with every generator $\x$ for the Heegaard diagram 
$(\Sig,\alphas,\betas;u)$, which in turn is a generator of $\ov\CFT(Y)$, we obtain 
$n+m$ generators for $(\Sig,\alphas,\betas_{n+m};u,v)$. These $n+m$ 
generators will be denoted by
$\x_{1-l},\x_{2-l},...,\x_{m+n-l}$,
where $l=\lfloor m/2\rfloor$ and 
$\x_i$ is on the left of $\beta_g$ if $i<0$ and is on the right of $\beta_g$ otherwise.
The rest of generators for the Heegaard diagram 
$(\Sig,\alphas,\betas_{n+m};u,v)$ are in correspondence with the generators $\y$ of 
$(\Sig,\alphas,\betas_0;u,v)$. Every such generator will be denote by $\ov\y$. With this
notation fixed we have
\begin{displaymath}
\spinc(\x_i)=\begin{cases}\spinc(\x)+i\ \ &\text{if}\ i\geq 0\\
\spinc(\x)+n+m+i\ \ &\text{if}\ i<0\end{cases}\ \ \ \text{and} \ \ \ 
\spinc(\ov\y)=\spinc(\y)+n+\left\lceil\frac{m}{2}\right\rceil. 
\end{displaymath}   
Restricting our attention to the relative $\SpinC$ classes $\spinc$ and 
$\spinc+m$ we find
\begin{displaymath}
\begin{split}
&\ov\CFKT(K_{n+m},\spinc)=\left\langle \x_{\spinc-\spinc(\x)}\ \big|\ 
\x\in\Ta\cap\Tb\ \ \text{and}\ \ \spinc(\x)\leq \spinc\right\rangle,\\
&\ov\CFKT(K_{n+m},\spinc+m)=\left\langle \x_{\spinc-\spinc(\x)-n}\ \big|\ 
\x\in\Ta\cap\Tb\ \ \text{and}\ \ \spinc(\x)> \spinc-n\right\rangle,
\end{split}
\end{displaymath}
If the curve $\lambda_{n+m}$ is sufficiently close to the juxtaposition 
$\lambda\star (m+n)\beta_g$ the first complex is identified with the sub-complex
$$\left\langle \x\ \big|\ \x\in\Ta\cap\Tb\ \ \text{and}\ \ \spinc(\x)\leq 
\spinc\right\rangle$$
of $\ov\CFT(\Sig,\alphas,\betas;u)$, while the restriction of the map $f^\spinc$ 
to $\ov\CFKT(K_{n+m},\spinc)$ is identified with the inclusion of the aforementioned 
sub-complex in $\ov\CFT(Y)$. Similarly, the second complex is identified with the 
sub-complex $$\left\langle \x\ \big|\ \x\in\Ta\cap\Tb\ \ \text{and}\ \ 
\spinc(\x)> \spinc-n\right\rangle$$
of $\ov\CFT(\Sig,\alphas,\betas;s)$ 
while the restriction of the map $\ovl{f}^\spinc$ 
to  $\ov\CFKT(K_{n+m},\spinc+m)$ is identified with the inclusion of the
 aforementioned sub-complex in $\ov\CFT(\Sig,\alphas,\betas;s)$.\\

Let $C=C_K$ denote the $\Z\oplus\Z$-filtered chain complex generated by triples
$[\x,i,j]$ with $\x\in\Ta\cap\Tb,\ i,j\in\Z$ and $\spinc(\x)-i+j=0$. The differential 
of $C$ is defined by 
\begin{displaymath}
\begin{split}
d[\x,i,j]&=\sum_{\y\in\Ta\cap\Tb}\sum_{\phi\in\pi_2^1(\x,\y)}\#\left(
\ov\Mod(\phi)\right)[\y,i-n_u(\phi),j-n_s(\phi)]\\
&=\sum_{a,b=0}^\infty[d^{a,b}(\x),i-a,j-b].
\end{split}
\end{displaymath} 
Since $d\circ d=0$ we conclude that $d^{0,0}\circ d^{0,0}=0$, while 
\begin{equation}\label{eq:dod=0}
\begin{split}
&d^{0,1}\circ d^{0,0}+d^{0,0}\circ d^{0,1}=0,\\ 
&d^{1,0}\circ d^{0,0}+d^{0,0}\circ d^{1,0}=0\ \ \ \text{and}\\
&d^{1,1}\circ d^{0,0}+d^{0,0}\circ d^{1,1}
+d^{0,1}\circ d^{1,0}+d^{1,0}\circ d^{0,1}=0.
\end{split}
\end{equation}
Following \cite{OS-surgery} 
(or the notation set in the introduction) $\ov\CFT(Y)$ 
is identified as $C\{j=0\}$, while $\ov\CFKT(K_{n+m},\spinc)$ and 
$\ov\CFT(K_{n+m},\spinc+m)$ are identified with 
$$C\{i\leq \spinc,j=0\}\ \ \text{and}\ \ C\{i=0,j\leq n-\spinc-1\},$$
respectively. 
There is a  chain homotopy equivalence $\taub$  
from $C\{i=0\}$ to $C\{j=0\}$.
The following is thus just a re-statement of Theorem~\ref{thm:mapping-cone}.

\begin{thm}\label{thm:surgery-formula}
Fix the above notation and a class $\spinc\in\Z=\RelSpinC(Y,K)$. 
Then $\ov\CFKT(K_n,\spinc)$ is quasi-isomorphic to the mapping cone 
$M(i^\spinc_n)$ of 
\begin{displaymath}
\begin{split}
&i^\spinc_n:C\{i\leq \spinc,j=0\}\oplus C\{i=0,j\leq n-\spinc-1\}\lra C\{j=0\},\\
&i^\spinc_n([\x,i,0],[\y,0,j]):=[\x,i,0]+\taub[\y,0,j].
\end{split}
\end{displaymath}
\end{thm}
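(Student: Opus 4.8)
The plan is to deduce Theorem~\ref{thm:surgery-formula} from Theorem~\ref{thm:mapping-cone} by translating the holomorphic‑triangle map $f_n^\spinc$ into the algebra of the filtered complex $C$; the paragraphs immediately preceding the statement have already performed this translation summand by summand, so what remains is to assemble those pieces and invoke Lemma~\ref{lem:f-chain-homotopy}.

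First I would fix $m$ large enough for Theorem~\ref{thm:mapping-cone} to apply and for the winding‑region picture of the preceding discussion to be valid (so that $\lambda_{n+m}$ is very close to the juxtaposition of $\lambda$ and $(m+n)\beta_g$ and cuts $\beta_g$ near the middle of the winding region). Theorem~\ref{thm:mapping-cone} then identifies $\ov\CFKT(K_n,\spinc)$, up to quasi‑isomorphism, with the mapping cone $M(f_n^\spinc)$ of
$$f_n^\spinc\colon\ov\CFKT(K_{n+m},\spinc)\oplus\ov\CFKT(K_{n+m},\spinc+m)\lra\ov\CFT(Y).$$
Since the mapping cone of a chain map is preserved, up to chain homotopy equivalence, when the map is replaced by a chain‑homotopic one and the source and target by isomorphic complexes, it suffices to show that under suitable identifications $f_n^\spinc$ becomes chain homotopic to $i_n^\spinc$; this will simultaneously exhibit the cone as independent of the large parameter $m$.

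Second, I would invoke the three identifications recorded in the discussion before the statement: $\ov\CFT(Y)\cong C\{j=0\}$ and $\ov\CFT(\Sig,\alphas,\betas;s)\cong C\{i=0\}$ as chain complexes; the first summand $\ov\CFKT(K_{n+m},\spinc)\cong C\{i\leq\spinc,\,j=0\}$, with the restriction of $f^\spinc$ to it being the inclusion into $C\{j=0\}$; and the second summand $\ov\CFKT(K_{n+m},\spinc+m)\cong C\{i=0,\,j\leq n-\spinc-1\}$, with the restriction of $\ovl f^\spinc$ to it being the inclusion into $C\{i=0\}$. By Lemma~\ref{lem:f-chain-homotopy} we have $f^\spinc\simeq\taub\circ\ovl f^\spinc$, where $\taub\colon C\{i=0\}\to C\{j=0\}$ is the chain homotopy equivalence coming from the Heegaard moves sliding the $s$‑marking onto the $u$‑marking. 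Combining these, on the direct sum $f_n^\spinc$ is chain homotopic to the map $([\x,i,0],[\y,0,j])\mapsto[\x,i,0]+\taub[\y,0,j]$, which is exactly $i_n^\spinc$; together with the matching of the internal differentials (each subquotient of $C$ carries the differential induced from that of $C$), this yields $M(f_n^\spinc)\simeq M(i_n^\spinc)$, and hence the theorem.

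The step I expect to be the main obstacle — really a bookkeeping check rather than a conceptual one, and one already carried out in the cited preceding paragraphs — is to verify precisely the placement of the two summands on the two sides of $\beta_g$: in the enumeration $\x_{1-l},\dots,\x_{m+n-l}$ (with $l=\lfloor m/2\rfloor$) the generators with $i<0$ sit on the $s$‑side and those with $i\geq0$ on the $u$‑side, and for $m$ large the relative $\SpinC$ constraints $\spinc(\x_i)\in\{\spinc,\spinc+m\}$ confine them exactly to the index ranges $\{i\leq\spinc,\,j=0\}$ and $\{i=0,\,j\leq n-\spinc-1\}$. The $s$‑versus‑$u$ discrepancy on the second summand is then absorbed precisely by $\taub$, which is why $\taub$ (and not the identity) appears in the definition of $i_n^\spinc$; once this is pinned down, the identification $M(f_n^\spinc)\simeq M(i_n^\spinc)$ is immediate and the proof is complete.
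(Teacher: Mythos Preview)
Your proposal is correct and follows exactly the paper's approach: the paper itself states that Theorem~\ref{thm:surgery-formula} ``is thus just a re-statement of Theorem~\ref{thm:mapping-cone},'' and the preceding paragraphs together with Lemma~\ref{lem:f-chain-homotopy} supply precisely the identifications you invoke to pass from $M(f_n^\spinc)$ to $M(i_n^\spinc)$. You have simply made explicit the assembly that the paper leaves implicit.
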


\subsection{The bypass homomorphisms}
We now turn to understanding the maps $F_0^\spinc$ and $F_\infty^\spinc$
(which will be called the {\emph{bypass homomorphisms}}) under the 
above identifications. To understand $F_0^\spinc$, one should identify $I^\spinc$ on 
$$\ov\CFKT(K_m,\spinc)\oplus\ov\CFKT(K_m,\spinc+m-1)=
C\{i\leq \spinc,j=0\}\oplus C\{i=0,j\leq -\spinc\}.$$
Let $\x\in\Ta\cap\Tb$,  $\x_i$ be the corresponding generator in $\ov\CFKT(K_m)$ 
and  suppose that $\square \in\pi_2^{-1}(\x_i,\Theta_{f_1},\Theta_\infty,\y;u,v,z)$ 
contributes to $I^\spinc$.
Looking at local coefficients implies that $i=-1$. In particular, 
$\spinc(\x)=\spinc(\y)=\spinc$ and $\x_{-1}$ corresponds to the generator 
$[\x,0,-\spinc]\in C\{i=0,j\leq-\spinc\}$.
There is a particular square class with very small domain which connects 
$\x_{-1},\Theta_{f_1},\Theta_\infty$ and $\x$ and has non-trivial contribution to 
$I^\spinc$. Considering the energy filtration and modifying 
$\ov\CFKT(K,\spinc)=C\{i=0,j=-\spinc\}$ by the chain map 
$I^\spinc|_{C\{i=0,j=-\spinc\}}$
which is a change of basis , we may thus assume that  
$F_0^\spinc$ is induced by projecting
the factor $C\{i=0,j\leq -\spinc\}$ in the mapping cone of $\imath^\spinc_1$ over 
the quotient complex $C\{i=0,j=-\spinc\}=\ov\CFKT(K,\spinc)$.\\

In order to study the map $F^\spinc_\infty$ we need to understand the map 
$$G^\spinc:\ov\CFKT(K_0,\spinc)\lra \ov\CFKT(K_m,\spinc+m-1).$$
Local considerations imply that
for a triangle class $\Delta\in\pi_2^0(\x,\Theta_{g_0}',\y;u,v,w)$ which has non-trivial 
contribution to $G^\spinc$ we have $\y=\z_i$ with $\z\in\Ta\cap\Tb$ and $i\leq -2$.
Every such $\Delta$ corresponds to a triangle class 
$\Delta'\in\pi_2^0(\x,\Theta_{g_0},\z_{i+1};u,v,w)$, and if $\lambda_m$ is 
sufficiently close to the juxtaposition $\lambda\star m\beta_g$ and $m$ is 
sufficiently large the moduli spaces $\Mod(\Delta)$ and $\Mod(\Delta')$ may 
in fact be identified. Note that $\spinc(\z_{i+1})=\spinc(\z_i)+1=\spinc+m$ and that
these latter disk classes $\Delta'$ are the disk classes which contribute to 
the holomorphic triangle map $g_{0,2}^\spinc$.
The image of $G^\spinc$ is thus in %the sub-complex
$$C\{i=0,j\leq -\spinc-1\}\subset \ov\CFKT(K_m,\spinc+m-1)=C\{i=0,j\leq -\spinc\},$$
and if 
\begin{displaymath}
\begin{split}
J^\spinc:\ov\CFKT(K_m,\spinc +m) &=C\{i=0,j\leq -\spinc-1 \}\\ 
&\lra \ov\CFKT(K_m,\spinc+m-1)=C\{i=0,j\leq -\spinc\}
\end{split}
\end{displaymath}
denotes the inclusion, $G^\spinc(\x)=J^\spinc(g_{0,2}^\spinc(\x))$. 
This  implies the following theorem.

\begin{thm}\label{thm:maps}
Under the identification of $\ov\CFKT(K_\bullet,\spinc)$ with 
$M(i_\bullet^\spinc)$ for $\bullet=0,1$, $F_\infty^\spinc$ is given by the inclusion of
 $M(i_0^\spinc)$ in  $M(i_1^\spinc)$ as a sub-complex,
while $F_0^\spinc$ is given by the quotient map. In particular, we have a short 
exact sequence
\begin{displaymath}
\begin{diagram}
0&\rTo & M(i_0^\spinc) &\rTo{F^\spinc_\infty=\hookrightarrow}&
M(i_1^\spinc)&\rTo{F_0^\spinc\ }& \ov\CFKT(K,\spinc)=\frac{M(i_1^\spinc)}{M(i_0^\spinc)}
&\rTo&0.
\end{diagram}
\end{displaymath}
\end{thm}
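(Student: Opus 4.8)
The plan is to assemble the explicit descriptions of $F_0^\spinc$ and $F_\infty^\spinc$ obtained in the two paragraphs preceding the statement and to note that the short exact sequence is then formal. First I would record the exact shape of the two cones. By Theorem~\ref{thm:surgery-formula} the map $i_n^\spinc$ goes from $C\{i\leq\spinc,j=0\}\oplus C\{i=0,j\leq n-\spinc-1\}$ to $C\{j=0\}$, so for $n=0$ the second summand of the source is $C\{i=0,j\leq-\spinc-1\}$ and for $n=1$ it is $C\{i=0,j\leq-\spinc\}$, while the first summand and the target coincide. Because $C$ is $\Z\oplus\Z$-filtered, $C\{i=0,j\leq-\spinc-1\}$ is a subcomplex of $C\{i=0,j\leq-\spinc\}$, and the formula $i_n^\spinc([\x,i,0],[\y,0,j])=[\x,i,0]+\taub[\y,0,j]$ shows that $i_1^\spinc$ restricts on the smaller source to exactly $i_0^\spinc$. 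Hence the obvious inclusion $M(i_0^\spinc)\hookrightarrow M(i_1^\spinc)$ is a subcomplex inclusion, and its quotient is $C\{i=0,j=-\spinc\}$ with the induced differential, which is by definition $\ov\CFKT(K,\spinc)$. So once $F_\infty^\spinc$ and $F_0^\spinc$ are matched with this inclusion and this quotient projection, the displayed exact sequence is immediate.

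Next I would identify $F_\infty^\spinc$. From its definition in Section~\ref{sec:naturality}, $F_\infty^\spinc(\x_1,\x_2)=(G_\infty^\spinc(\x_1),-\x_2)$ with $G_\infty^\spinc(\x_1,\x_2)=(\x_1,J^\spinc(\x_2))$, and the local-coefficient computation of $G^\spinc$ recorded just above identifies $J^\spinc$ with the inclusion $C\{i=0,j\leq-\spinc-1\}\hookrightarrow C\{i=0,j\leq-\spinc\}$. Thus $G_\infty^\spinc$ is precisely the inclusion of the source of $i_0^\spinc$ into the source of $i_1^\spinc$; since everything is over $\Fbb=\Z/2\Z$ the sign on $\x_2$ disappears, and $F_\infty^\spinc$ becomes literally the subcomplex inclusion $M(i_0^\spinc)\hookrightarrow M(i_1^\spinc)$ of the previous paragraph.

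Then I would identify $F_0^\spinc$. By definition $F_0^\spinc(\x_1,\x_2)=I^\spinc(\x_1)$, so $F_0^\spinc$ annihilates the cone factor $\ov\CFT(Y)$. The analysis of the squares $\square\in\pi_2^{-1}(\x_i,\Theta_{f_1},\Theta_\infty,\y;u,v,z)$ contributing to $I^\spinc$ forces the index of the generator $\x_i$ to equal $-1$; consequently $I^\spinc$ vanishes on $C\{i\leq\spinc,j=0\}$ and is supported on $C\{i=0,j\leq-\spinc\}$, and the square of minimal area shows that the leading term of $I^\spinc$ restricted to $C\{i=0,j=-\spinc\}$ is the identity. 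Therefore $I^\spinc|_{C\{i=0,j=-\spinc\}}$ is a filtered (for the area/energy filtration) change of basis; absorbing this filtered automorphism into the identification $\ov\CFKT(K,\spinc)=C\{i=0,j=-\spinc\}$, the map $F_0^\spinc$ becomes the composite $M(i_1^\spinc)\twoheadrightarrow C\{i=0,j\leq-\spinc\}\twoheadrightarrow C\{i=0,j=-\spinc\}$, which is exactly the quotient map $M(i_1^\spinc)\to M(i_1^\spinc)/M(i_0^\spinc)$.

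The hard part is not the algebra but making these identifications hold on the nose rather than merely up to quasi-isomorphism. Concretely, I would need to check that the change of basis absorbing $I^\spinc|_{C\{i=0,j=-\spinc\}}$ respects the differential, so that the lower-area terms of $I^\spinc$ do not obstruct the identification of $F_0^\spinc$ with the honest projection, and that the identifications $\ov\CFKT(K_\bullet,\spinc)\simeq M(i_\bullet^\spinc)$ of Theorem~\ref{thm:mapping-cone} really do intertwine the maps $F_\bullet^\spinc$ of Section~\ref{sec:naturality} with the cone inclusion and projection; the essential geometric inputs here are that $\taub$ is a chain homotopy equivalence and that, for $m$ large and the relevant curves pushed sufficiently far into the winding region, the small-domain squares and triangles have unique holomorphic representatives. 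Once these compatibilities are settled the theorem, and with it the short exact sequence, follows.
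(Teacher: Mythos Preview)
Your proposal is correct and follows essentially the same approach as the paper: the theorem is stated immediately after the two paragraphs analyzing $I^\spinc$ and $G^\spinc$, and the paper's proof is simply ``This implies the following theorem,'' so the content is precisely the identification of $J^\spinc$ with the inclusion $C\{i=0,j\le-\spinc-1\}\hookrightarrow C\{i=0,j\le-\spinc\}$ and of $I^\spinc$ (after the energy-filtration change of basis) with the projection onto $C\{i=0,j=-\spinc\}$, exactly as you describe. One minor sharpening: the local-coefficient argument actually forces the contributing generator to be $\x_{-1}$, so $I^\spinc$ is supported on $C\{i=0,j=-\spinc\}$ itself (not merely on $C\{i=0,j\le-\spinc\}$), which makes the change-of-basis step cleaner than your last paragraph suggests.
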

 
 Theorem~\ref{thm:maps} implies that the second row in %the diagram  of
  (\ref{eq:commutative-diagram}) is part of a short exact sequence.
 The discussion preceding Theorem 4.6 in \cite{Ef-splicing} implies that 
 the initial Heegaard diagram may be chosen 
 so that the first row is also completed to a short exact sequence.
 We thus have the following commutative diagram (upto chain homotopy):
\begin{equation}\label{eq:commutative-diagram-2}
\begin{diagram}
0&\rTo&\ov\CFKT(K_0,\spinc)&\rTo^{\ \Ff_\infty^\spinc\ }&
\ov\CFKT(K_1,\spinc)&\rTo^{\ \Ff_0^\spinc\ }&\ov\CFKT(K,\spinc)&\rTo &0\\
&&\dTo{\imath_0^\spinc}&&\dTo{\imath_1^\spinc}&&\dTo{Id}&&\\
0&\rTo& M(i_0^\spinc)&\rTo{F_\infty^\spinc}&M(i_1^\spinc)&\rTo{F_0^\spinc\ }&
\ov\CFKT(K,\spinc)&\rTo &0
\end{diagram}.
\end{equation}
 In particular, in the level of homology,  the connecting 
homomorphism of the short exact sequence  in the 
second row of (\ref{eq:commutative-diagram-2}) is identified with the connecting 
homomorphism $\Ff_1^\spinc$ of the first row, which is used in the splicing 
formula of \cite{Ef-splicing}.
A completely similar argument identifies $\ovl\Ff_\infty^\spinc$ with the inclusion map
$\ovl{F}_\infty^\spinc$ from $M(i_0^{\spinc-1})$ to $M(i_1^\spinc)$ and 
$\ovl\Ff_0^\spinc$
with the quotient map $\ovl{F}_0^\spinc$ to $\ov\CFKT(K,\spinc)$, while 
$\ovl\Ff_1^\spinc$ is identified with the connecting homomorphism of the short 
exact sequence
\begin{equation}\label{eq:SES-2}
\begin{diagram}
0&\rTo& M(i_0^{\spinc-1})&\rTo{\ovl{F}_\infty^\spinc}&M(i_1^\spinc)&
\rTo{\ovl{F}_0^\spinc}&
\ov\CFKT(K,\spinc)&\rTo &0
\end{diagram}.
\end{equation}

\begin{proof}(of Theorem~\ref{thm:BFH}) 
For $\bullet=0,1$, let us define
\begin{displaymath}
C_\bullet(K)=\bigoplus_{\spinc\in\RelSpinC(Y,K)}C_\bullet(K,\spinc),\ \ \ \text{where }
C_\bullet(K,\spinc)=M(i_\bullet^\spinc).
\end{displaymath}
 Let $C_\infty(K)=\bigoplus_\spinc C_\infty(K,\spinc)$, where 
$C_\infty(K,\spinc)=C\{i=\spinc,j=0\}$. The chain maps
$F_\infty, \ovl{F}_\infty \co C_0(K)\ra C_1(K)$ and 
$F_0, \ovl{F}_0 \co C_1(K)\ra C_\infty(K)$ sit in the  short exact sequences 
\begin{displaymath}
\begin{diagram}
0&\rTo{}&C_0(K)&\rTo{F_\infty}&C_1(K)&\rTo{F_0}&C_\infty(K)&\rTo &0\ \ 
&\text{and}\\
0&\rTo{}&C_0(K)&\rTo{\ovl{F}_\infty}&C_1(K)&\rTo{\ovl{F}_0}&C_\infty(K)&\rTo 
&0\ \ &\\
\end{diagram}
\end{displaymath}
The maps induced by $F_\bullet$ and $\ovl{F}_\bullet$ are 
$\pphi_\bullet$ and $\pphibar_\bullet$, respectively. Thus, Proposition 7.2 from 
\cite{Ef-splicing} may be applied here to complete the proof of Theorem~\ref{thm:BFH}.
\end{proof}

\section{The linear algebra of bypass homomorphisms}
\subsection{Nilpotent compositions}
Let $K$ be a knot inside the homology sphere $Y$ and let us continue to use the notation
of the previous sections.
\begin{lem}\label{lem:map-Y}
Let $\x\in\ov\CFKT(K,\spinc)$ be a closed element and $[\x]$ denote the class represented 
by $\x$ in $\ov\HFKT(K,\spinc)$. Then
\begin{displaymath}
(\ovl{F}^{\spinc-1}_0\circ {F}^{\spinc-1}_\infty\circ \ovl{F}^\spinc_1)[\x]=[d^{1,0}(\x)]
\ \ \ \text{and}\ \ \ 
({F}^{\spinc+1}_0\circ \ovl{F}^{\spinc+1}_\infty\circ {F}^\spinc_1)[\x]=[d^{0,1}(\x)].
\end{displaymath}
\end{lem}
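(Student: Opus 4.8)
The plan is to prove both identities by an explicit diagram chase inside the combinatorial models for the mapping cones furnished by Theorem~\ref{thm:surgery-formula} and Theorem~\ref{thm:maps}. Once those models are fixed, every map appearing in the two compositions is either the inclusion of a subcomplex, the projection onto a quotient complex, or the connecting homomorphism of one of the two short exact sequences, and the two formulas come out of bookkeeping of $(i,j)$--filtration levels. Since the ground field is $\Fbb=\Z/2\Z$ throughout, signs never enter.

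First I would fix notation for the cones. Writing $A^\spinc=C\{i\leq\spinc,\,j=0\}$, $B^\spinc_n=C\{i=0,\,j\leq n-\spinc-1\}$ and $D=C\{j=0\}=\ov\CFT(Y)$, the complex $M(i^\spinc_n)$ has underlying module $A^\spinc\oplus B^\spinc_n\oplus D$ and differential $\partial(a,b,e)=(da,\,db,\,a+\taub b+de)$, where $d$ is the induced differential on each subquotient (that is, $\sum_{a}d^{a,0}$ on $A^\spinc$ and on $D$, and $\sum_b d^{0,b}$ on $B^\spinc_n$). The key numerical coincidence is $B^{\spinc}_0=C\{i=0,\,j\leq-\spinc-1\}=B^{\spinc+1}_1$, which is exactly why $M(i^\spinc_0)$ embeds in both $M(i^\spinc_1)$ and $M(i^{\spinc+1}_1)$. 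Consequently $F^\spinc_\infty\colon M(i^\spinc_0)\hookrightarrow M(i^\spinc_1)$ is the identity on $A^\spinc$ and $D$ and the inclusion $B^\spinc_0\subset B^\spinc_1$ on the middle factor, with $F^\spinc_0$ the projection $(a,b,e)\mapsto[b]\in B^\spinc_1/B^\spinc_0=C\{i=0,\,j=-\spinc\}$; dually $\ovl F^\spinc_\infty\colon M(i^{\spinc-1}_0)\hookrightarrow M(i^\spinc_1)$ is the identity on $B^\spinc_1$ ($=B^{\spinc-1}_0$) and $D$ and the inclusion $A^{\spinc-1}\subset A^\spinc$ on the $A$--factor, with $\ovl F^\spinc_0$ the projection $(a,b,e)\mapsto[a]\in A^\spinc/A^{\spinc-1}=C\{i=\spinc,\,j=0\}$. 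Here $\ov\CFKT(K,\spinc)$ is read off as $C\{i=0,j=-\spinc\}$ or as $C\{i=\spinc,j=0\}$ (canonically identified: same generators, differential $d^{0,0}$), and $d^{1,0},d^{0,1}$ are viewed as the chain maps $C\{i=\spinc,j=0\}\to C\{i=\spinc-1,j=0\}$ and $C\{i=0,j=-\spinc\}\to C\{i=0,j=-\spinc-1\}$ they induce.

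For the first identity I would represent $[\x]$ by a $d^{0,0}$--cycle in $C\{i=\spinc,j=0\}$, lift it along $\ovl F^\spinc_0$ to $(\widehat\x,0,0)\in M(i^\spinc_1)$, where $\widehat\x$ is the tautological lift to the top level $i=\spinc$ of $A^\spinc$, and apply $\partial$: this gives $(d_A\widehat\x,\,0,\,\widehat\x)$ with $d_A\widehat\x=d^{1,0}\widehat\x+d^{2,0}\widehat\x+\cdots$ supported in $i\leq\spinc-1$, so the element lies in $\ovl F^\spinc_\infty(M(i^{\spinc-1}_0))$ and represents $\ovl F^\spinc_1[\x]\in\Hbb_0(\spinc-1)$. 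Now $F^{\spinc-1}_\infty$ leaves this element unchanged (its middle factor is zero), and $\ovl F^{\spinc-1}_0$ projects the $A$--component onto its top level $C\{i=\spinc-1,j=0\}$, which retains exactly $d^{1,0}\widehat\x$; this is $[d^{1,0}(\x)]$. The second identity is the mirror computation with $i$ and $j$ exchanged: lift $\x\in C\{i=0,j=-\spinc\}$ along $F^\spinc_0$ to $(0,\widehat\x,0)\in M(i^\spinc_1)$, apply $\partial$ to get $(0,\,d_B\widehat\x,\,\taub\widehat\x)$ with $d_B\widehat\x=d^{0,1}\widehat\x+d^{0,2}\widehat\x+\cdots$ supported in $j\leq-\spinc-1$, a representative of $F^\spinc_1[\x]\in\Hbb_0(\spinc)$; then $\ovl F^{\spinc+1}_\infty$ leaves it unchanged (its $B$--factor is the identity, since $B^\spinc_0=B^{\spinc+1}_1$) and $F^{\spinc+1}_0$ projects the $B$--component onto its top level $C\{i=0,j=-\spinc-1\}$, retaining $d^{0,1}\widehat\x=[d^{0,1}(\x)]$. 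Observe that the $\taub$--term never survives the final projection, so its precise nature is irrelevant to the statement.

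The only points that demand (routine) care are: that $d^{1,0}\x$ and $d^{0,1}\x$ are $d^{0,0}$--cycles, so that $[d^{1,0}(\x)]$ and $[d^{0,1}(\x)]$ make sense — immediate from $d^{0,0}\x=0$ together with the relations $d^{0,0}d^{1,0}+d^{1,0}d^{0,0}=0$ and $d^{0,0}d^{0,1}+d^{0,1}d^{0,0}=0$ of (\ref{eq:dod=0}); and keeping straight which of the complexes $A^{\spinc-1},A^\spinc,A^{\spinc+1}$ and $B^\bullet_0,B^\bullet_1$ occurs at each stage (this is where the coincidence $B^\spinc_0=B^{\spinc+1}_1$, used at three shifts, is needed). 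I do not expect any genuine obstacle: there is no new geometric input, the entire statement being elementary linear algebra on the filtered complex $C$ once Theorems~\ref{thm:surgery-formula} and \ref{thm:maps} are granted.
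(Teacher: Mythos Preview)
Your proposal is correct and follows essentially the same approach as the paper's own proof: compute the connecting homomorphism $\ovl{F}_1^\spinc$ (respectively $F_1^\spinc$) by lifting $\x$ to the top filtration level of $M(i_1^\spinc)$, applying the cone differential, observing the result lands in the subcomplex, then composing with the inclusion and the final projection to extract the leading $d^{1,0}$ (respectively $d^{0,1}$) term. The paper carries out only the first identity in detail and says ``the second claim is proved similarly''; you have been more explicit about both cases and about the bookkeeping of the $B$--factors, but the argument is the same diagram chase.
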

\begin{proof}
Since $\ovl{F}_1^\spinc$ is the connecting homomorphism associated with the short 
exact sequence (\ref{eq:SES-2}), in order to compute $\ovl{F}^\spinc_1[\x]$
note that $\x$ is the image of $([\x,\spinc,0],0,0)\in M(i_1^\spinc)$ under 
the quotient map. 
The differential of   $M(i_1^\spinc)$ takes this  element to 
$$\left(\sum_{i=0}^\infty[d^{i,0}(\x),\spinc-i,0],0,[\x,\spinc,0]\right)
\in M(i_1^\spinc).$$ Since 
$d^{0,0}(\x)=0$ this latter element is in $M(i_0^{\spinc-1})$. 
$F^{\spinc-1}_\infty$ is 
the inclusion, thus
$$\left({F}^{\spinc-1}_\infty\circ \ovl{F}^\spinc_1\right)[\x]=
\left(\sum_{i=1}^\infty[d^{i,0}(\x),\spinc-i,0],0,[\x,\spinc,0]\right)\in M(i_1^{\spinc-1})
$$
The projection map $\ovl{F}^{\spinc-1}_0$  takes this latter element to the closed element
$d^{1,0}(\x)$ in $\ov\CFKT(K,\spinc-1)$.
%completing the proof of the first claim. 
The second claim is proved  similarly.
\end{proof}
\begin{cor}\label{cor:nilpotent}
For every relative $\SpinC$ class $\spinc$ the map
$$\ovl{\Ff}_0\circ {\Ff}_\infty\circ \ovl{\Ff}_1\circ {\Ff}_0
\circ \ovl{\Ff}_\infty\circ {\Ff}_1\big|_{\ov\HFKT(K,\spinc)}
:\ov\HFKT(K,\spinc)\lra \ov\HFKT(K,\spinc)$$
is nilpotent.
\end{cor}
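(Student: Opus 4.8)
The plan is to rewrite the six‑fold composition, using Lemma~\ref{lem:map-Y}, as the endomorphism of $\ov\HFKT(K,\spinc)$ induced on $d^{0,0}$‑homology by $d^{1,0}\circ d^{0,1}$, and then to observe that such an endomorphism is nilpotent for grading reasons. First I would reassociate the composition as
\[
\bigl(\ovl{\Ff}_0\circ{\Ff}_\infty\circ\ovl{\Ff}_1\bigr)\circ\bigl({\Ff}_0\circ\ovl{\Ff}_\infty\circ{\Ff}_1\bigr),
\]
and identify each parenthesised triple on the summand $\ov\HFKT(K,\spinc)$ of $H_*(C,d^{0,0})=\bigoplus_{\spinc}\ov\HFKT(K,\spinc)$. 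By the second identity of Lemma~\ref{lem:map-Y}, the triple ${\Ff}_0^{\spinc+1}\circ\ovl{\Ff}_\infty^{\spinc+1}\circ{\Ff}_1^{\spinc}$ is the map $D^+\colon\ov\HFKT(K,\spinc)\to\ov\HFKT(K,\spinc+1)$ induced by $d^{0,1}$; by the first identity of Lemma~\ref{lem:map-Y}, applied with $\spinc+1$ in place of $\spinc$, the triple $\ovl{\Ff}_0^{\spinc}\circ{\Ff}_\infty^{\spinc}\circ\ovl{\Ff}_1^{\spinc+1}$ is the map $D^-\colon\ov\HFKT(K,\spinc+1)\to\ov\HFKT(K,\spinc)$ induced by $d^{1,0}$. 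Hence the composition in the statement, restricted to $\ov\HFKT(K,\spinc)$, is $D^-\circ D^+$, the self‑map induced by $d^{1,0}\circ d^{0,1}$.

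It then remains to show that $D^-\circ D^+$ is nilpotent. I would argue this by a Maslov‑grading count. The map $D^+$ is the first‑page differential of the spectral sequence of the $j$‑filtered complex $C\{i=0\}$, and $D^-$ is the first‑page differential of the $i$‑filtered complex $C\{j=0\}=\ov\CFT(Y)$. Both complexes carry a genuine Maslov grading with respect to which the differential is homogeneous of degree $-1$, and in each of them the filtration (by $j$, resp.\ by $i$) is split by the Maslov grading; consequently every filtration‑jump component of the differential — in particular $d^{0,1}$ and $d^{1,0}$ — is of Maslov degree $-1$, so $D^+$ and $D^-$ each lower the Maslov grading by one and $D^-\circ D^+$ lowers it by two. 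Since $\ov\HFKT(K,\spinc)$ is a finite‑dimensional $\Fbb$‑vector space whose Maslov grading takes values in a bounded interval, any endomorphism strictly lowering the grading is nilpotent, which completes the argument. (One may also bypass gradings: the relations deduced from $d\circ d=0$, in the spirit of~(\ref{eq:dod=0}), give on $d^{0,0}$‑homology the identities $(d^{0,1})^2=(d^{1,0})^2=0$ and $d^{0,1}d^{1,0}=d^{1,0}d^{0,1}$ over $\Fbb$, whence $(d^{1,0}d^{0,1})^2=(d^{1,0})^2(d^{0,1})^2=0$, so in fact the square of the displayed composition already vanishes.)

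With Lemma~\ref{lem:map-Y} available I do not anticipate a real obstacle; the only step needing attention is the bookkeeping in the reduction — keeping track of the $\spinc$‑shifts so that the two parenthesised triples are matched with the correct instances of the two identities of Lemma~\ref{lem:map-Y} — together with the routine verification that the filtration‑jump components $d^{0,1}$ and $d^{1,0}$ are of Maslov degree $-1$ (or, in the algebraic variant, the derivation of the second‑order consequences of $d\circ d=0$).
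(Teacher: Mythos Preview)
Your reduction via Lemma~\ref{lem:map-Y} to the endomorphism induced by $d^{1,0}\circ d^{0,1}$ is exactly what the paper does. The difference is only in how nilpotency is then established. The paper argues via the $\spinc$-grading: using the commutation relation from~(\ref{eq:dod=0}) it rewrites $(D^-D^+)^n$ as $(D^-)^n(D^+)^n$ on $d^{0,0}$-homology, and then observes that $(D^+)^n[\x]$ lies in $\ov\HFKT(K,\spinc+n)$, which vanishes once $n$ exceeds twice the genus. Your primary argument uses the Maslov grading instead, which is equally valid. Your parenthetical alternative is in fact sharper than either: from $d^{0,2}d^{0,0}+(d^{0,1})^2+d^{0,0}d^{0,2}=0$ one gets $(D^+)^2=0$ on homology (and likewise $(D^-)^2=0$), so combined with $D^+D^-=D^-D^+$ over $\Fbb$ the composition squares to zero, a stronger conclusion than the paper states.
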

\begin{proof}
It suffices to show that 
$F=\ovl{F}_0\circ {F}_\infty\circ \ovl{F}_1\circ {F}_0\circ \ovl{F}_\infty\circ {F}_1$
is nilpotent. However, by Lemma~\ref{lem:map-Y}, for $\x\in\ov\HFKT(K,\spinc)$
we have 
\begin{displaymath}
\begin{split}
&F[\x]=[d^{1,0}(d^{0,1}(\x))]\\
\Rightarrow\ \ \ &F^n[\x]=\left[\left(d^{1,0}\circ d^{0,1}\right)^n(\x)\right]
=\left[\left(\left(d^{1,0}\right)^n\circ  \left(d^{0,1}\right)^n\right)(\x)\right],
\end{split}
\end{displaymath}
where the last equality follows by an inductive use of  (\ref{eq:dod=0}).
Since $[(d^{0,1})^n(\x)]$ is in $\ov\HFKT(K,\spinc+n)$ and  for large values of $n$
$\ov\HFKT(K,\spinc+n)$ is trivial, it follows that $F^n$ is trivial  if $n$ is sufficiently  
large  (e.g. if $n>2g$ where $g$ is the genus of $K$).
\end{proof}

\subsection{Block decomposition for bypass homomorphisms}
Let us assume that the chain complex $C$ is defined from the Heegaard diagram 
$(\Sig,\alphas,\betas;u,v)$. Changing the role of the two punctures gives the duality maps
\begin{displaymath}
\begin{split}
\tau_\bullet=\tau_\bullet(K):\Hbb_\bullet(K)\lra \Hbb_\bullet(K),\ \ \  \ \
\bullet\in\{0,1,\infty\}.
\end{split}
\end{displaymath}
These duality maps take $\Hbb_\bullet(K,\spinc)$ to $\Hbb_\bullet(K,-\spinc)$
if $\bullet=1,\infty$, and to $\Hbb_0(K,1-\spinc)$ when $\bullet=0$.
Furthermore, we have $\tau_\bullet\circ \tau_\bullet=Id$. 
Following the notation of \cite{Ef-splicing}, in a basis for 
$\Hbb_\bullet(K)$ where $\pphi_\bullet$ takes the block form 
$\colvec{0&0\\ I&0}$, we assume that 
\begin{equation}\label{eq:tau}
\tau_\bullet
=\colvec[1]{A_\bullet & B_\bullet\\ C_\bullet & D_\bullet}
\ \ \ \bullet\in\{0,1,\infty\}.
\end{equation}
As observed in \cite{Ef-splicing}, one can then compute
$$\pphibar_0=\tau_\infty\circ \pphi_0 \circ \tau_1,\ \ \pphibar_1=\tau_0\circ \pphi_1\circ
\tau_\infty\ \ \text{and}\ \ \pphibar_\infty=\tau_1\circ \pphi_\infty\circ \tau_0.$$
Define $X_\bullet=X_\bullet(K)$ by $X_0=B_1 B_0B_\infty,
X_1=B_\infty B_1 B_0$ and 
$X_\infty= B_0 B_\infty B_1$. 
We denote the rank of $F_\bullet$ by 
$a_\bullet=a_\bullet(K)$, for $\bullet=0,1,\infty$. 
Thus $a_1,a_\infty$ and $a_0+1$ have 
the same parity. Note that $B_0, B_1$ and 
$B_\infty$ are (respectively) matrices of size 
\begin{displaymath}
a_\infty\times a_1,\ a_0\times a_\infty\ \ \text{and}\ \ a_1\times a_0.
\end{displaymath}
\begin{lem}\label{lem:a-is-positive}
If $K$ is a knot of genus $g>0$ then 
$B_\bullet\neq 0$ for $\bullet\in\{0,1,\infty\}$.
In particular,
$a_\bullet>0$.
\end{lem}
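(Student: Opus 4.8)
The plan is to argue by contradiction: assuming $B_{\bullet_0}=0$ for some $\bullet_0\in\{0,1,\infty\}$, I would show $g(K)=0$. First, unpack the hypothesis. By \eqref{eq:tau}, $B_{\bullet_0}=0$ says precisely that $\tau_{\bullet_0}$ is block lower--triangular in the basis used there; equivalently, $\tau_{\bullet_0}$ preserves the subspace $(\Hbb_{\bullet_0})_2$ which, by exactness of the triangles \eqref{eq:LES-2knots}, is simultaneously the kernel of the outgoing bypass map and the image of the incoming one. Feeding this into the identities $\pphibar_0=\tau_\infty\circ\pphi_0\circ\tau_1$, $\pphibar_1=\tau_0\circ\pphi_1\circ\tau_\infty$, $\pphibar_\infty=\tau_1\circ\pphi_\infty\circ\tau_0$ and into Lemma~\ref{lem:map-Y} --- which identifies the composites $\pphibar_0\circ\pphi_\infty\circ\pphibar_1$ and $\pphi_0\circ\pphibar_\infty\circ\pphi_1$ with the ``vertical'' and ``horizontal'' operators $d^{1,0},d^{0,1}$ induced on $\ov\HFKT(K)$ --- a direct computation in block coordinates shows that $d^{0,1}$ is carried by the product $B_1B_0$, while $d^{1,0}=\tau_\infty\circ d^{0,1}\circ\tau_\infty$ has the same rank. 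Hence if $B_0=0$ or $B_1=0$, then both $d^{1,0}$ and $d^{0,1}$ act trivially on $\ov\HFKT(K)$.

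To reach a contradiction, I would invoke the genus-detection theorem for knot Floer homology (Ozsv\'ath--Szab\'o for $S^3$; Ni in general): a null-homologous knot $L$ in a homology sphere has $g(L)=0$ if and only if $\ov\HFKT(L)$ is supported in Alexander grading $0$, and in that case $C_L$ is filtered homotopy equivalent to a complex with trivial filtration, so all the maps $d^{a,b}$ with $(a,b)\neq(0,0)$ vanish on homology. The vanishing of $d^{1,0}$ and $d^{0,1}$ on $\ov\HFKT(K)$, combined with the higher relations \eqref{eq:dod=0} and the extremal-grading computation --- at $\spinc=\pm g$ one has $\Hbb_0(\pm g)=0$, hence $\pphi_0^{\pm g}$ is an isomorphism and $\pphi_1^{\pm g}=0$, so $\ov\HFKT(K,\pm g)\subseteq(\Hbb_\infty)_2$ --- should force $\ov\HFKT(K)$ to be concentrated in grading $0$, i.e.\ $g(K)=0$, contradicting $g>0$. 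For $\bullet_0=\infty$, where $B_\infty$ does not enter the computation of $d^{1,0},d^{0,1}$ on $\ov\HFKT(K)$, I would instead re-run the whole argument for the knot $K_1\subset Y_1(K)$: this is again a null-homologous knot in a homology sphere, and since its zero-framed longitude coincides with that of $K$ one has $g(K_1)=g(K)$, so the corresponding vertical differential on $\ov\HFKT(Y_1(K),K_1)$ --- now carried by $B_0B_\infty$ --- would again have to vanish, forcing genus $0$.

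Finally, the ``in particular'' is automatic: $B_0,B_1,B_\infty$ are matrices of sizes $a_\infty\times a_1$, $a_0\times a_\infty$ and $a_1\times a_0$, so once all three blocks are nonzero, all three $a_\bullet$ are positive. The hard part will be the bookkeeping underlying the first two paragraphs: pinning down exactly which composite of bypass maps realizes which graded piece $d^{a,b}$ of the differential on $C_K$, reconciling the several block decompositions of $\Hbb_0,\Hbb_1,\Hbb_\infty$ that are forced around the two exact triangles, and --- for $\bullet_0=\infty$ --- checking that a nonzero off-diagonal block of a duality map for $K_1$ genuinely forces $B_\infty(K)\neq0$ (alternatively one might avoid passing to $K_1$ altogether, extracting what is needed from the $\SpinC$-grading symmetry of $\tau_\infty$ against the extremal-grading computation above).
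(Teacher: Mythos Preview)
Your block computation is correct: Lemma~\ref{lem:map-Y} together with the identities $\pphibar_\bullet=\tau\,\pphi_\bullet\,\tau$ gives that $[d^{0,1}]$ acts on $\Hbb_\infty(K)$ as $\left(\begin{smallmatrix}0&0\\ B_1B_0&0\end{smallmatrix}\right)$ and $[d^{1,0}]=\tau_\infty\,[d^{0,1}]\,\tau_\infty$, so if $B_0=0$ or $B_1=0$ then both induced maps vanish. The gap is the next step: you assert that vanishing of $[d^{1,0}]$ and $[d^{0,1}]$ on $\ov\HFKT(K)$ ``should force'' $\ov\HFKT(K)$ to be supported in Alexander grading~$0$. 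This is not what genus detection says. Genus detection tells you $\ov\HFKT(K,\pm g)\neq 0$; it says nothing about the first--page differentials $[d^{0,1}],[d^{1,0}]$ of the spectral sequence from $\ov\HFKT(K)$ to $\ov\HFT(Y)$. Those differentials can perfectly well vanish while higher ones $[d^{0,b}],[d^{a,0}]$ with $a,b\geq 2$ do the killing --- nothing in \eqref{eq:dod=0} or your extremal--grading remark rules this out. In matrix language you are trying to deduce $B_1\neq 0$ and $B_0\neq 0$ from the non-vanishing of the product $B_1B_0$, but you have not established the latter, and the paper never claims it.

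Your treatment of $B_\infty$ compounds the problem: passing to $K_1\subset Y_1(K)$ changes the boundary parameterization, so the blocks $B_\bullet(K_1)$ are \emph{not} a simple relabelling of $B_\bullet(K)$, and the claim that the relevant product becomes $B_0B_\infty$ of $K$ is unjustified. The paper's argument is both different and shorter. It works directly at the extremal $\SpinC$ class: from $\Hbb_0(K,g)=0$ one gets that $\pphi_0^g$ is an isomorphism while $\pphi_0^{-g}=0$, hence there is a nonzero $\left(\begin{smallmatrix}0\\ b\end{smallmatrix}\right)\in\Ker(\pphi_0)\setminus\Ker(\pphibar_0)$; computing $\pphibar_0\left(\begin{smallmatrix}0\\ b\end{smallmatrix}\right)=\left(\begin{smallmatrix}B_\infty B_1 b\\ D_\infty B_1 b\end{smallmatrix}\right)\neq 0$ gives $B_1\neq 0$ immediately, and analogous kernel/image comparisons handle $B_\infty$ and $B_0$. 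You already had the key extremal--grading observation in hand; use it directly rather than routing through the composite $B_1B_0$.
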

\begin{proof}
Since $H_*(M(i_0^{g}))=0$ by Theorem~\ref{thm:surgery-formula},
the map 
$$F_0^{g}:\Hbb_1(K,g)\lra \Hbb_\infty(K,g)$$
is an isomorphism. From here and by duality
$\ovl{F}_0^{-g}$ is also an isomorphism.
Similarly, 
\begin{displaymath}
\begin{split}
&H_*\left(M\left(i_1^{-g}\right)\right)\simeq \ov\HFKT(K,-g)\ \  \text{and}\\ 
&H_*\left(M\left(i_0^{-g}\right)\right)\simeq 
\ov\HFKT(K,-g)\oplus \ov\HFKT(K,-g).
\end{split}
\end{displaymath}
Thus $F_\infty^{-g}$ is surjective, i.e. 
$F_0^{-g}$ is trivial, implying that:
\begin{itemize}
\item $\Ker(F_0)\setminus \Ker(\ovl{F}_0)$ is non-empty.
\item $\Image(\ovl{F}_0)\setminus \Image (F_0)$ is non-empty.
\end{itemize} 
Returning to the matrix presentations, the first claim above implies that
\begin{displaymath}
\begin{split}
\exists\ \colvec{a\\ b}\in \Hbb_1(K)& \ \ \text{s.t. }
\begin{cases}
0=\colvec{0&0\\ I&0}\colvec{a\\ b}\\
\\
0\neq \colvec{A_\infty& B_\infty\\ C_\infty& D_\infty}
\colvec{0&0\\ I&0}
\colvec{A_1 & B_1\\ C_1 & D_1}
\colvec{a\\ b}
\end{cases}
\end{split}
\end{displaymath}
Thus $a=0$ and $\colvec{B_\infty B_1 b\\
D_\infty B_1 b}\neq 0$. In particular, $B_1\neq 0$.
Similarly, the second claim above implies that
$\Ker(\ovl{F}_1)\setminus \Ker({F}_1)$ is non-empty and thus
 $B_\infty\neq 0$.\\
For non-triviality of $B_0$, choose $x\in \Hbb_\infty(K, g)$.
This element may be represented by $y=[x,g,0]\in C\{j=0\}$.
Thus, $d^{*,0}y\in C\{i<g, j=0\}$ and 
$\ovl{F}_1^{g}(x)=(d^{*,0}y,y,0)\in M(i_0^{g-1})$
is thus in the kernel of $\ovl{F}_\infty$. If 
${F}_\infty^{g-1}(\ovl{F}_1^{g}(x))=0$ then 
$\ovl{F}_1^{g}(x)={F}_1^{g-1}(x')$ for some 
$x'\in \Hbb_\infty(K,g-1)$. In other words, if we denote 
the dual of $[x',0,1-g]$ by $z\in C\{i\leq 1-g,j=0\}$, the above equality 
implies
\begin{displaymath}
\begin{split}
\exists\ 
\begin{cases}
\ovl{y} \in C\{i<g,j=0\},\\
 \ovl{z} \in C\{i<1-g,j=0\}
 \end{cases}
\ \ \text{s.t }\ \begin{cases}
d^{*,0}(y-\ovl{y})=0\\
d^{*,0}(z-\ovl{z})=0\\
(y-\ovl{y})+(z-\ovl{z})\ \text{is exact.}
\end{cases}
\end{split}
\end{displaymath} 
Note that $-\ovl{y}+z-\ovl{z}\in C\{i<g,j=0\}$ while $y$
represents a non-trivial element in the homology of the quotient 
$$C\{i=g,j=0\}
=\frac{C\{i\leq g,j=0\}}{C\{i<g,j=0\}}.$$
 Thus $(y-\ovl{y})+(z-\ovl{z})$ can not be exact,  and 
 $\Ker({F}_\infty)\setminus \Ker(\ovl{F}_\infty)$
 can not be trivial. From here, an argument similar to the preceding two cases 
  implies $B_0\neq 0$.
\end{proof}

\begin{lem}\label{lem:X-nilpotent}
With the above notation fixed, the three matrices $X_\bullet$ are all nilpotent for 
$\bullet\in\{0,1,\infty\}$. In particular, if the knot $K$ is non-trivial
both the kernel and the cokernel of $X_\bullet$ are non-trivial.
\end{lem}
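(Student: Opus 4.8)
The plan is to deduce the nilpotence of $X_\bullet = B_0B_\infty B_1$ (and its cyclic permutations) directly from Corollary~\ref{cor:nilpotent}, by unwinding what the composition $\ovl{\Ff}_0\circ {\Ff}_\infty\circ \ovl{\Ff}_1\circ {\Ff}_0\circ \ovl{\Ff}_\infty\circ {\Ff}_1$ looks like in the block bases. First I would recall from~\cite{Ef-splicing} the formulas $\pphibar_0=\tau_\infty\circ \pphi_0 \circ \tau_1$, $\pphibar_1=\tau_0\circ \pphi_1\circ\tau_\infty$, $\pphibar_\infty=\tau_1\circ \pphi_\infty\circ \tau_0$, together with the normal form in which each $\pphi_\bullet$ is $\colvec{0&0\\ I&0}$ and each $\tau_\bullet=\colvec[1]{A_\bullet & B_\bullet\\ C_\bullet & D_\bullet}$ with $\tau_\bullet^2=\Id$. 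Substituting the normal form for $\pphi_\bullet$ into a product such as $\pphibar_0 \circ \pphi_\infty = \tau_\infty \pphi_0 \tau_1 \pphi_\infty$, the outer $\pphi_\bullet$'s act as projections onto/inclusions from the appropriate summands, so the only part of $\tau_1$ (resp. $\tau_\infty$, $\tau_0$) that survives in the sandwiched position is its off-diagonal block $B_1$ (resp. $B_\infty$, $B_0$). Carrying this bookkeeping through all six factors of $\ovl{\Ff}_0\circ {\Ff}_\infty\circ \ovl{\Ff}_1\circ {\Ff}_0\circ \ovl{\Ff}_\infty\circ {\Ff}_1$, one should find that this composition, restricted to a suitable summand, is conjugate (up to the harmless inclusion/projection identifications) to the matrix $B_0 B_\infty B_1 = X_\infty$ on $\ov\HFKT(K,\spinc)$ — and by the analogous computation starting from the other two relative $\SpinC$ indices, to $X_0$ and $X_1$ respectively. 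Since Corollary~\ref{cor:nilpotent} says this composition is nilpotent for every $\spinc$, each $X_\bullet$ is nilpotent.

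For the ``in particular'' clause I would argue as follows. By Lemma~\ref{lem:a-is-positive}, for a non-trivial knot $K$ (so $g>0$) we have $B_\bullet \neq 0$ and $a_\bullet > 0$ for all $\bullet$, so the spaces on which $X_\bullet$ acts are nonzero. A nilpotent endomorphism of a nonzero finite-dimensional vector space always has non-trivial kernel (it cannot be injective, else it would be an isomorphism and hence not nilpotent) and, by rank–nullity, non-trivial cokernel as well. Hence both $\Ker(X_\bullet)$ and $\CoKer(X_\bullet)$ are non-trivial.

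\textbf{The main obstacle.} The real work is the linear-algebra bookkeeping in the first step: tracking exactly which blocks of the various $\tau_\bullet$ appear, in which order, and verifying that the inclusion/projection maps $F_0, F_\infty$ (shown in Theorem~\ref{thm:maps} to be the quotient and sub-complex inclusion of the mapping cones) really do truncate each $\tau_\bullet$ to its $B_\bullet$-block when sandwiched between two $\pphi$-type maps. One must be careful about the cyclic ordering — $X_0=B_1B_0B_\infty$ versus $X_1=B_\infty B_1 B_0$ versus $X_\infty=B_0B_\infty B_1$ — which is fixed by the order in which $\Hbb_0,\Hbb_1,\Hbb_\infty$ appear in the exact triangles~(\ref{eq:LES-2knots}) and by the $\spinc$-index at which one starts the composition. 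Once this identification is pinned down the nilpotence is immediate from Corollary~\ref{cor:nilpotent}, and the kernel/cokernel statement is formal. I do not expect any homological subtleties beyond those already handled in the preceding sections; this lemma is essentially a translation of Corollary~\ref{cor:nilpotent} into the matrix language of~\cite{Ef-splicing}.
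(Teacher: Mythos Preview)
Your approach is correct and is exactly the one the paper takes: compute the six-fold composition in the block bases and invoke Corollary~\ref{cor:nilpotent}. Two small corrections to your expected outcome, though. First, when you actually carry out the block multiplication of $\ovl{F}_0 F_\infty \ovl{F}_1 F_0 \ovl{F}_\infty F_1$ on $\Hbb_\infty=\Fbb^{a_1}\oplus\Fbb^{a_0}$, the $(1,1)$ block you obtain is $(B_\infty B_1 B_0)^2=(X_1)^2$, not a single $X_\bullet$; nilpotence of the full map then forces $X_1^N=0$ for some $N$. Second, varying the relative $\SpinC$ index does not produce the other two $X_\bullet$'s --- the block form is independent of $\spinc$. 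Instead, nilpotence of $X_0$ and $X_\infty$ follows from that of $X_1$ by cyclicity: $X_0^{N+1}=B_1 B_0 (B_\infty B_1 B_0)^N B_\infty=0$ and similarly for $X_\infty$. Your argument for the kernel/cokernel clause via Lemma~\ref{lem:a-is-positive} is correct and matches the paper.
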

\begin{proof}
%Let $\tau_\bullet=\colvec{A_\bullet&B_\bullet\\
%C_\bullet& D_\bullet}$ denote the duality map defined 
%in Subsection~\ref{subsec:re-statement}.
The first claim is a direct consequence of  Corollary~\ref{cor:nilpotent} once we represent 
$F_\bullet=F_\bullet(K)$ as $\colvec{0&0\\ I&0}$ and note 
that $\ovl{F}_\bullet=F_\bullet(K)$ are given by   
\begin{displaymath}
\begin{split}
&\ovl{F}_0=\tau_\infty {F}_0 \tau_1,\ \ \ 
\ovl{F}_1=\tau_0 {F}_1 \tau_\infty\ \ \ \text{and}\ \ \ 
\ovl{F}_\infty=\tau_1 {F}_\infty \tau_0,
\end{split}
\end{displaymath}
which implies 
$\ovl{F}_0 F_\infty \ovl{F}_1 F_0 
\ovl{F}_\infty F_1 
=\colvec{(X_1)^2 &0\\ \star &0}$.
Thus there is a positive integer $N$ so that $X_1^N=0$. As a 
consequence $X_\bullet^{N+1}=0$ for $\bullet=0,1,\infty$.
The second claim is a consequence of the first claim unless $a_\bullet=0$.
However, by Lemma~\ref{lem:a-is-positive},  $a_\bullet>0$.
\end{proof}

\begin{defn}\label{def:full-rank-knot}
The knot $K$ inside the homology sphere $Y$ is called {\emph{full-rank}} if all three 
matrices $B_0(K),B_1(K)$ and $B_\infty(K)$ are full rank.
\end{defn}

If $P_\bullet$ is an invertible $a_\bullet\times a_\bullet$
matrix and the matrices $Y_\bullet$ are arbitrary matrices of correct size,
we may choose a change of basis for either of $\Hbb_0(K),\Hbb_1(K)$ and 
$\Hbb_\infty(K)$ which is given by 
 the invertible matrices
\begin{equation}\label{eq:change-of-basis}
\begin{split}
\Pbb_0=\left(\begin{array}{cc} P_\infty & 0\\ Y_0& P_1
\end{array}\right),\ \
\Pbb_1=\left(\begin{array}{cc} P_0& 0\\ Y_1& P_\infty
\end{array}\right)\ \text{and}\  \
\Pbb_\infty=\left(\begin{array}{cc} P_1 & 0\\ Y_\infty & P_0 
\end{array}\right),
\end{split}
\end{equation}
respectively. The block forms $F_\bullet=\colvec{0&0\\ I&0}$ remain unchanged 
under such a change of basis. A simultaneous 
change of basis of the form illustrated in (\ref{eq:change-of-basis}) is  called
an {\emph{admissible}} change of basis. 
The following lemma %which gives a standard presentation for the matrices 
 %$\tau_\bullet(K)$ for full-rank knots $K$ 
 will be useful through our forthcoming discussions.

\begin{lem}\label{lem:standarndard-form}
Suppose that $K$ is a knot in a homology sphere and 
for $\bullet\in\{0,1,\infty\}$ let $\tau_\bullet$ denote $\tau_\bullet(K)$ and 
$X_\bullet$ denote the matrix $X_\bullet(K)$. Choose
$$(\circ,\bullet,*)\in\{(0,1,\infty),(1,\infty,0),(\infty,0,1)\}.$$
\begin{itemize}
\item[(1)] If $B_\circ(K),B_\bullet(K)$ are injective and $B_*(K)$ is surjective, after 
an admissible change of basis we may assume that
\begin{equation}\label{eq:SF-1}
\begin{split}
&\tau_\circ=\colvec{0&0&\vline &I\\ 0&\star%J_*(K)
&\vline &0\\
\hline 
I&0&\vline &0}, \ 
\tau_\bullet=\colvec{0&0&0&\vline &I&0\\ 0&0&0&\vline &0&I\\
0&0&\star%J_\circ(K)
&\vline &0&0\\ \hline
I&0&0&\vline &0&0\\ 0&I&0&\vline &0&0}\ \
\text{and}\ \
\tau_*=\colvec{0&\vline &X_\bullet&  \star &  \star  \\ \hline
 \star  &\vline & \star & \star & \star \\ 
 \star  &\vline & \star & \star  & \star \\
 \star  &\vline & \star  & \star & \star  \\}
\end{split}
\end{equation}

\item[(2)] If $B_\circ(K),B_\bullet(K)$ are surjective and $B_*(K)$ is injective, after 
an admissible change of basis we may assume that
\begin{equation}\label{eq:SF-2}
\begin{split}
&\tau_\bullet=\colvec{0&\vline &0 &I\\ 
\hline 
0&\vline &\star%J_\circ(K) 
&0\\
I&\vline &0 &0}, \ \ 
\tau_\circ=
\colvec{0&0&\vline &0 &I&0\\ 0&0&\vline &0 &0&I\\ \hline
0&0&\vline &\star%J_*(K)
&0&0\\ 
I&0&\vline &0 &0&0\\ 0&I&\vline &0&0&0}\ \ \text{and}\ \ 
\tau_*=\colvec{
 \star & \star & \star  &\vline &  \star \\ 
 \star & \star & \star  &\vline & \star \\
 \star & \star & \star  &\vline & X_\circ\\  \hline
 \star & \star & \star  &\vline & 0 \\}
\end{split}
\end{equation}
\end{itemize}
\end{lem}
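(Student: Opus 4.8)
The plan is to establish (1) by a sequence of admissible changes of basis performed in a fixed cyclic order, and to deduce (2) from (1) by transposing the three involutions. Throughout we use that each $\tau_\bullet$ is an involution, $\tau_\bullet\circ\tau_\bullet=\mathrm{Id}$; that in the chosen bases $\pphi_\bullet=F_\bullet$ has the block form $\left(\begin{smallmatrix}0&0\\ I&0\end{smallmatrix}\right)$; and that an admissible change of basis (\ref{eq:change-of-basis}) preserves these three block forms while acting on $\tau_\circ,\tau_\bullet,\tau_*$ by conjugation by the block--lower--triangular matrices $\Pbb_\circ,\Pbb_\bullet,\Pbb_*$. By relabelling we may take the cyclic triple to be $(\circ,\bullet,*)=(0,1,\infty)$, so that $B_\circ=B_0$ and $B_\bullet=B_1$ are injective while $B_*=B_\infty$ is surjective; the size constraints then read $a_1\le a_\infty\le a_0$.

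\textbf{Steps 1 and 2: cleaning $\tau_\circ$ and $\tau_\bullet$.} Write $\tau_\circ=\left(\begin{smallmatrix}A_\circ&B_\circ\\ C_\circ&D_\circ\end{smallmatrix}\right)$. Since $B_\circ$ is injective, the invertible blocks occurring in $\Pbb_\circ$ can be chosen so that $B_\circ=\left(\begin{smallmatrix}I\\ 0\end{smallmatrix}\right)$, which refines the first summand of $\Hbb_\circ$ into pieces of sizes $a_\bullet$ and $a_*-a_\bullet$. Expanding $\tau_\circ^2=\mathrm{Id}$ in the refined blocks then expresses $D_\circ$ and $C_\circ$ in terms of the two remaining ``upper'' blocks of $A_\circ$ and pins the sole surviving diagonal block to an involution; the residual freedom in $\Pbb_\circ$ (the additive block $Y_\circ$ together with the stabiliser of $B_\circ$) kills those two blocks of $A_\circ$, and $\tau_\circ$ takes the form in (\ref{eq:SF-1}). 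For $\tau_\bullet$ we now use only the admissible freedom fixing the normal form of $\tau_\circ$: among the three invertible parameters $P_0,P_1,P_\infty$ the one denoted $P_\circ$ does \emph{not} occur in $\Pbb_\circ$, so together with $Y_\bullet$ and the stabiliser of $\tau_\circ$ it still suffices to bring the (now further refined) block $B_\bullet$ to an inclusion $\left(\begin{smallmatrix}I\\ 0\end{smallmatrix}\right)$, after which $\tau_\bullet^2=\mathrm{Id}$ and the leftover freedom yield the displayed form of $\tau_\bullet$ in (\ref{eq:SF-1}).

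\textbf{Step 3: reading off $\tau_*$, and statement (2).} After Steps 1--2 each of $P_0,P_1,P_\infty$ is constrained to stabilise a normal form already obtained, so only $Y_*$ and a residual stabiliser remain for $\tau_*$. Two facts determine the relevant part of $\tau_*$. First, $B_\circ$ and $B_\bullet$ have been put in inclusion form, so the cyclic composite $B_*B_\bullet B_\circ=X_\bullet(K)$ is literally a sub--block of the $B_*$--part of $\tau_*$; this identifies the block marked $X_\bullet$ in (\ref{eq:SF-1}). Second, $\tau_*^2=\mathrm{Id}$ together with the surjectivity of $B_*$ and the identities expressing the barred bypass maps $\pphibar_0,\pphibar_1,\pphibar_\infty$ through the $\tau_\bullet$'s and the unbarred maps (equivalently, the computation underlying Lemma~\ref{lem:X-nilpotent}) forces the top--left block of $\tau_*$ to vanish; the remaining entries are genuinely unconstrained and are recorded as $\star$. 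Since $X_\bullet$ is nilpotent by Lemma~\ref{lem:X-nilpotent}, this is exactly the assertion of (\ref{eq:SF-1}). Finally, (2) follows by running the same argument on the transposed involutions $\tau_\bullet^{\mathrm T}$, conjugated so as to interchange their two summands: these are again involutions, the admissible group for the transposed data is again of the form (\ref{eq:change-of-basis}) after a cyclic relabelling of the $P$'s, ``injective'' and ``surjective'' swap roles, and the cyclic composite is now $X_\circ$ up to transpose — still nilpotent; the output is precisely (\ref{eq:SF-2}).

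\textbf{Main difficulty.} The substance of the proof is the bookkeeping in Steps 2--3: one must verify that the admissible subgroup decomposes so that $\tau_\circ$ and then $\tau_\bullet$ are cleaned using disjoint pieces of the available freedom, and that what is left over genuinely cannot remove the $X_\bullet$--block from $\tau_*$ — i.e. that the conjugacy class of $X_\bullet$ is invariant under the admissible action, so it must resurface somewhere, and once $\tau_\circ,\tau_\bullet$ are clean the only remaining home is $\tau_*$. Keeping track of which refinement of which $\Hbb_\bullet$ is induced by each normalisation, and checking the several scalar identities extracted from $\tau_\bullet^2=\mathrm{Id}$ (most of which turn out to hold automatically over $\Fbb$), is the bulk of the work; everything else is routine linear algebra over $\Fbb$.
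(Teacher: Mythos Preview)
Your approach is correct and is precisely what the paper intends: its own proof reads in full ``The proof consists of straight-forward linear algebra,'' and you have carried that out. One small correction to Step~3: the vanishing of the top-left block $A_*$ of $\tau_*$ is not \emph{forced} by $\tau_*^2=\mathrm{Id}$ together with surjectivity of $B_*$ --- one can write down involutions over $\Fbb$ with $B_*$ surjective and $A_*\neq 0$ --- but is instead \emph{arranged} using the still-free parameter $Y_*$ that you already noted remains available: since $B_*$ is surjective the equation $B_*Y_*=A_*$ is solvable, and conjugation by $\left(\begin{smallmatrix}I&0\\ Y_*&I\end{smallmatrix}\right)$ replaces $A_*$ by $A_*-B_*Y_*$ while leaving $B_*$ (and hence the identified $X_\bullet$ sub-block) untouched. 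The nilpotence of $X_\bullet$ is true but is not part of the assertion of~(\ref{eq:SF-1}) and need not be invoked.
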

\begin{proof}
The proof consists of straight-forward linear algebra.
\end{proof}

\section{Splicing and homology sphere $L$-spaces}
\subsection{Special pairs}
 Given an arbitrary matrix $M$ denote the rank of $\Ker(M)$ by $k(M)$,
 denote the rank of $\Coker(M)$ by $c(M)$ and set $i(M)=k(M)+c(M)$. The
 matrices $M_1$ and $M_2$ are called {\emph{equivalent}} if $k(M_1)=k(M_2)$ and 
 $c(M_1)=c(M_2)$.  
 If  $M^\star\in M_{n_\star\times m_\star}(\Fbb)$  for $\star=1,2$ are a pair of matrices,
 $M^1\otimes M^2\in M_{n_1n_2\times m_1m_2}(\Fbb)$ is the associated  map from
 $\Fbb^{m_1m_2}=\Fbb^{m_1}\otimes \Fbb^{m_2}$ to $\Fbb^{n_1n_2}=\Fbb^{n_1}
 \otimes\Fbb^{n_2}$.\\ 

Let $Y=Y(K_1,K_2)$
denote the three-manifold obtained by splicing the complements of
$K_1\subset Y_1$ and $K_2\subset Y_2$, where $Y_1$ and $Y_2$ are homology spheres. 
For $\square\in\{A,B,C,D,X,\tau\}$, $\bullet\in\{0,1,\infty\}$ and $\star\in\{1,2\}$
 let $\square_\bullet^\star=\square_\bullet(K_\star)$. 
 Proposition 5.4 from 
 \cite{Ef-splicing} and the discussion following it give the following.
\begin{prop}\label{prop:splicing-formula}
If $K_i$ is a knot inside the homology sphere $Y_i$ for $i=1,2$, 
\begin{displaymath}
\rank\ \ov\HFT(Y(K_1,K_2);\Fbb)=i(\Dd(K_1,K_2)),
\end{displaymath}
where the matrix $\Dd(K_1,K_2)$ is given by
\begin{displaymath}
\colvec[.7]{
D_\infty^1B_1^1\otimes B_1^2A_0^2&B_1^1A_0^1\otimes I&
B_1^1B_0^1\otimes I&D_\infty^1 A_1^1\otimes B_1^2A_0^2&I\otimes B_1^2B_0^2&0\\
&&&\\
%%%%%%%%%%%%%%%%%%%%%%%%%%%%%%%%%%%%%%%%%%%%%%%%%%%
I\otimes B_\infty^2 B_1^2 &D_1^1A_0^1\otimes B_\infty^2 A_1^2 &
D_1^1B_0^1\otimes B_\infty^2 A_1^2& 0&B_0^1B_\infty^1\otimes I&
B_0^1 A_\infty^1\otimes I\\
&&&\\
%%%%%%%%%%%%%%%%%%%%%%%%%%%%%%%%%%%%%%%%%%%%%%%%%%%
I\otimes D_\infty^2 B_1^2& \begin{array}{c}I\otimes I+\\
D_1^1 A_0^1\otimes D_\infty^2 A_1^2\end{array}& D_1^1 B_0^1\otimes D_\infty^2 A_1^2 
&0&0&0\\
&&&\\
%%%%%%%%%%%%%%%%%%%%%%%%%%%%%%%%%%%%%%%%%%%%%%%%%%%
B_\infty^1 B_1^1\otimes I&0&I\otimes B_0^2B_\infty^2& B_\infty^1 A_1^1\otimes I
&\begin{array}{c} D_0^1 B_\infty^1\otimes B_0^2A_\infty^2\\ 
+X_1^1B_\infty^1\otimes B_0^2X_1^2\end{array}
&\begin{array}{c} D_0^1 A_\infty^1\otimes B_0^2A_\infty^2\\ 
+X_1^1A_\infty^1\otimes B_0^2X_1^2\end{array}\\
&&&\\
%%%%%%%%%%%%%%%%%%%%%%%%%%%%%%%%%%%%%%%%%%%%%%%%%%%
D_\infty^1 B_1^1\otimes D_1^2A_0^2&0&0&\begin{array}{c}I\otimes I+\\
D_\infty^1 A_1^1\otimes D_1^2A_0^2\end{array}&I\otimes D_1^2B_0^2&0\\
&&&\\
%%%%%%%%%%%%%%%%%%%%%%%%%%%%%%%%%%%%%%%%%%%%%%%%%%%
0&0&I\otimes D_0^2B_\infty^2 &0&\begin{array}{c} 
D_0^1 B_\infty^1\otimes D_0^2A_\infty^2\\
+X_1^1B_\infty^1\otimes D_0^2X_1^2\end{array}&\begin{array}{c}
I\otimes I+\\ D_0^1A_\infty^1\otimes D_0^2 A_\infty^2\\
+X_1^1 A_\infty^1\otimes D_0^2X_1^2\end{array}
},
\end{displaymath}
\end{prop}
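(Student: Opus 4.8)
The plan is to feed the concrete description of the bypass homomorphisms obtained in Sections~\ref{sec:naturality} and~\ref{sec:splicing} into the abstract splicing formula of \cite{Ef-splicing}. Proposition~5.4 of \cite{Ef-splicing}, together with the discussion following it, already expresses $\rank\,\ov\HFT(Y(K_1,K_2);\Fbb)$ as $i(\mathcal C)$, where $\mathcal C$ is the differential of a mapping cone assembled out of the bypass exact triangles \eqref{eq:LES-2knots} for $K_1$ and $K_2$ --- that is, out of the groups $\Hbb_\bullet(K_i,\spinc)$, the maps $\pphi_\bullet^i,\pphibar_\bullet^i$, and the duality maps $\tau_\bullet^i=\tau_\bullet(K_i)$, for $\bullet\in\{0,1,\infty\}$ and $i\in\{1,2\}$. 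The role of Theorem~\ref{thm:BFH}, through the identifications carried out in the proofs of Theorem~\ref{thm:commutative-diagram} and Theorem~\ref{thm:maps}, is to guarantee that the maps entering this formula are exactly the bypass homomorphisms $F_\bullet,\ovl F_\bullet$ computed from $\CFKT(Y_i,K_i)$, so that $\mathcal C$ is computable from the knot Floer data; what then remains is to rewrite $i(\mathcal C)$ in the displayed closed form.

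First I would fix, for each $i\in\{1,2\}$ and each $\bullet\in\{0,1,\infty\}$, a basis of $\Hbb_\bullet(K_i)$ in which $\pphi_\bullet^i$ has the block form $\colvec{0&0\\ I&0}$, and record $\tau_\bullet^i=\colvec[1]{A_\bullet^i & B_\bullet^i\\ C_\bullet^i & D_\bullet^i}$ as in \eqref{eq:tau}. Substituting the relations $\pphibar_0=\tau_\infty\pphi_0\tau_1$, $\pphibar_1=\tau_0\pphi_1\tau_\infty$ and $\pphibar_\infty=\tau_1\pphi_\infty\tau_0$ into $\mathcal C$, and repeatedly using $\pphi_\bullet\colvec{v\\ w}=\colvec{0\\ v}$ together with the abbreviations $X_0=B_1B_0B_\infty$, $X_1=B_\infty B_1B_0$ and $X_\infty=B_0B_\infty B_1$, one checks that every block of $\mathcal C$ reduces to one of the entries displayed in $\Dd(K_1,K_2)$; since the ground field is $\Fbb$ no signs intervene, and the identities \eqref{eq:dod=0} are precisely what guarantees that the resulting block matrix squares to zero. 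The diagonal blocks that emerge in the form $I\otimes I+(\cdots)$ carry the acyclic part of $\mathcal C$: cancelling them (which changes neither $k$ nor $c$, hence not $i$) and discarding the summands annihilated by the relations $\rho_2\rho_1=\rho_3\rho_2=0$ of $\mathcal A(T^2,0)$ leaves exactly the $6\times 6$ block matrix of the statement, so $\rank\,\ov\HFT(Y(K_1,K_2);\Fbb)=i(\Dd(K_1,K_2))$. Independence of the basis choices is automatic, since any admissible change of basis \eqref{eq:change-of-basis} preserves the normal form of each $\pphi_\bullet$ and preserves $k$ and $c$, exactly as exploited in Lemma~\ref{lem:standarndard-form}.

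The step I expect to be the main obstacle is not this block algebra but the bookkeeping that matches $\mathcal C$ with the pairing of the modules supplied by Theorem~\ref{thm:BFH} in the first place: one must transpose the type-$D$ module of one complement to a type-$A$ module so that the pairing theorem applies, verify that the meridian/longitude identification defining $Y(K_1,K_2)$ is the framing convention under which $\ov\CFDT(Y(K_i))$ was computed so that orientations and relative $\SpinC$ decompositions agree, and confirm that after these matches the acyclic part of $\mathcal C$ is precisely the collection of $I\otimes I$ diagonal blocks. These are the points settled by the discussion following Proposition~5.4 of \cite{Ef-splicing}; granting them, the rest is the routine expansion above, verified entry by entry against the displayed matrix.
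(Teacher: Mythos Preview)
The paper does not prove this proposition at all: the sentence immediately preceding it reads ``Proposition~5.4 from \cite{Ef-splicing} and the discussion following it give the following,'' and the statement is simply quoted from that reference in the present notation. So the correct ``proof'' here is a one-line citation, and your opening sentence already captures that.

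The extra material you add, however, contains genuine errors. First, $\Dd(K_1,K_2)$ is not a differential and there is no reason for it to square to zero; it is a single linear map between two $\Fbb$-vector spaces (assembled from tensor products of pieces of $\Hbb_\bullet(K_1)$ and $\Hbb_\bullet(K_2)$), and $i(\Dd)=\dim\Ker(\Dd)+\dim\Coker(\Dd)$ computes the rank of the homology of its mapping cone. Your appeal to the identities \eqref{eq:dod=0} is therefore misplaced: those are chain-level relations for the knot Floer differential $d^{a,b}$, not constraints on $\Dd$. Second, the route through Theorem~\ref{thm:BFH}, type-$D$/type-$A$ modules, and the relations $\rho_2\rho_1=\rho_3\rho_2=0$ in $\mathcal A(T^2,0)$ inverts the logic of the paper. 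The splicing formula of \cite{Ef-splicing} is proved there directly from Heegaard-diagram considerations and the bypass triangles; the bordered reinterpretation in Theorem~\ref{thm:BFH} is a \emph{consequence} established in the present paper (via Proposition~7.2 of \cite{Ef-splicing}), not an input to Proposition~\ref{prop:splicing-formula}. Finally, the ``cancelling $I\otimes I$ diagonal blocks'' step you describe is not how $\Dd$ arises in \cite{Ef-splicing}; those blocks are part of $\Dd$ itself, and the later arguments in this paper (e.g.\ Proposition~\ref{prop:splicing-with-full-rank} and Lemma~\ref{lem:special-cases}) perform cancellations \emph{within} $\Dd$ to estimate $k(\Dd)$ and $c(\Dd)$, not to produce it.

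In short: keep your first sentence and drop the rest, or replace it with the precise pointer to Proposition~5.4 and the subsequent block-form computation in \cite{Ef-splicing}.
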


\begin{defn}
The pair $(K_1,K_2)$ is called a {\emph{special pair}} if 
$$\ov\HFT(Y(K_1,K_2);\Fbb)=\Fbb.$$
\end{defn} 

Let us assume, throughout this section, that $(K_1,K_2)$ is a special pair, which is the 
case  if and only if $i(\Dd(K_1,K_2))=1$. 
Let  $k_\star^\bullet=k(B_\star^\bullet)$ and $c_\star^\bullet=c(B_\star^\bullet)$, 
for $\star\in\{0,1,\infty\}$ and $\bullet=1,2$.
Define $\imath:\{0,1,\infty\}\ra \{0,1,\infty\}$ by  
$\imath(0)=\infty,\imath(1)=1$ and $\imath(\infty)=0$.
 Let $\Dd=\Dd(K_1,K_2)$ and 
note that the cokernel of $\Dd$ includes a subspace $\CT(\Dd)$ and its kernel 
 includes a subspace $\KT(\Dd)$ which are isomorphic to 
\begin{displaymath}
\bigoplus_{\bullet\in\{0,1,\infty\}}\Coker(B_\bullet^1)\otimes 
\Coker(B_{\imath(\bullet)}^2)\ \ \text{and}\ \ 
\bigoplus_{\bullet\in\{0,1,\infty\}}\Ker(B_\bullet^1)\otimes 
\Ker(B_{\imath(\bullet)}^2)
\end{displaymath}
 respectively, and correspond  to the first, second and fourth  rows, 
 and to the first, third and fifth  columns, respectively.
Moreover, if $A_\infty^1\otimes D_0^2+D_0^1\otimes A_\infty^2=0$
(which may be assumed after an admissible change of basis 
if $c_\infty^1k_0^2=k_0^1c_\infty^2=0$)
the cokernel also includes a subspace  isomorphic to 
$\Coker(B_\infty^1)\otimes \Coker(B_\infty^2)$ 
and the kernel  includes a subspace isomorphic to 
$\Ker(B_0^1)\otimes \Ker(B_0^2)$.
Denote the ranks of $\KT(\Dd)$ and $\CT(\Dd)$ by $\ok(\Dd)$ and $\oc(\Dd)$,
respectively. 
Thus $ k(\Dd)+c(\Dd)\leq 1$ and 
\begin{displaymath}
\begin{split}
&\ok(\Dd)=\sum_{\bullet\in\{0,1,\infty\}}k_\bullet^1k_{\imath(\bullet)}^2\leq k(\Dd)\ \ \
\text{and}\ \ \
\oc(\Dd)=\sum_{\bullet\in\{0,1,\infty\}}c_\bullet^1c_{\imath(\bullet)}^2\leq c(\Dd).
\end{split}
\end{displaymath}

\begin{prop}\label{prop:all-special-pairs}
If $(K_1,K_2)$ is a special pair, then possibly after interchanging $K_1$ and $K_2$, 
one of the following is the case:
\begin{itemize}
\item[(G)] $K_1$ is full-rank.
\item[(S-1)] The matrix $B_0^2$ is invertible,
$B_0^1$ is surjective and $B_1^1$ and $B_\infty^2$ are injective.
\item[(S-2)] The matrix $B_0^2$ is invertible,
$B_0^1$ is injective and $B_1^1$ and $B_\infty^2$ are surjective.
\end{itemize}
\end{prop}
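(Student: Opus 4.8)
The plan is a finite case analysis driven entirely by the identity of Proposition~\ref{prop:splicing-formula}: the pair $(K_1,K_2)$ is special exactly when $i(\Dd)=k(\Dd)+c(\Dd)=1$, where $\Dd=\Dd(K_1,K_2)$. For $i=1,2$ write $S_i\subseteq\{0,1,\infty\}$ for the set of slots $\bullet$ at which $B_\bullet(K_i)$ fails to be of full rank, equivalently $k_\bullet^i>0$ and $c_\bullet^i>0$. If $S_1=\emptyset$ or $S_2=\emptyset$ then, after possibly interchanging the two knots, $K_1$ is full rank and we are in case (G); also a knot of genus zero is full rank. So from now on I may assume both $S_i$ are nonempty and both $K_i$ have positive genus, hence (Lemma~\ref{lem:a-is-positive}) every $a_\bullet^i>0$, and (Lemma~\ref{lem:X-nilpotent}) each of the cyclic triple products $X_\bullet^i$ is nilpotent with nontrivial kernel and cokernel. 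I will also use repeatedly the parity relations $a_1^i\equiv a_\infty^i\equiv a_0^i+1\pmod 2$, which force $B_1^i$ and $B_\infty^i$ to be non-square (strictly injective or strictly surjective), while allowing $B_0^i$ to be square.

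First I would extract the ``cheap'' consequences of $i(\Dd)=1$, which do not require looking inside $\Dd$. From $\KT(\Dd)\subseteq\Ker(\Dd)$ and $\CT(\Dd)\subseteq\Coker(\Dd)$ one gets $\ok(\Dd)+\oc(\Dd)\le 1$, so among the six non-negative integers $k_0^1k_\infty^2,\ k_1^1k_1^2,\ k_\infty^1k_0^2,\ c_0^1c_\infty^2,\ c_1^1c_1^2,\ c_\infty^1c_0^2$ at most one is nonzero, and any nonzero one equals $1$. This yields a propagation rule: if $\bullet\in S_1$ then $\imath(\bullet)\notin S_2$ (and symmetrically), since $k_\bullet^1,c_\bullet^1>0$ together with $k_{\imath(\bullet)}^2,c_{\imath(\bullet)}^2>0$ would make two of the six products nonzero. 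Together with $S_i\ne\emptyset$, this $\imath(S_1)\cap S_2=\emptyset$ leaves only a short list of possibilities for the ordered pair $(S_1,S_2)$ up to interchange; in particular $S_1=S_2=\{1\}$ is impossible because $\imath(1)=1$.

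Next I would eliminate, configuration by configuration, all possibilities for $(S_1,S_2)$ except the target one. In the majority of cases the cheap tools suffice: the six-products constraint plus the propagation rule force one of the matrices $B_1^j$ or $B_\infty^j$ to be square and invertible, which contradicts parity. The genuinely delicate configurations are those in which the visible lower bounds $\ok,\oc$ do not already reach $2$ and the target injective/surjective pattern is not yet pinned down. There I would bring in the \emph{extra} summands of $\Ker(\Dd)$ and $\Coker(\Dd)$ --- the copies of $\Ker(B_0^1)\otimes\Ker(B_0^2)$ and $\Coker(B_\infty^1)\otimes\Coker(B_\infty^2)$ that become available, after an admissible change of basis making $A_\infty^1\otimes D_0^2+D_0^1\otimes A_\infty^2=0$, precisely when $c_\infty^1k_0^2=k_0^1c_\infty^2=0$ --- and, when these still do not push $i(\Dd)$ above $1$, normalize the remaining $A_\bullet^i,C_\bullet^i,D_\bullet^i$ by further admissible changes of basis (in the spirit of Lemma~\ref{lem:standarndard-form}) and compute directly the rank of the resulting block matrix $\Dd$ over $\Fbb$ to exhibit a second independent kernel or cokernel vector. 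Carrying this out leaves, after renaming, only $S_1=\{\infty\}$ and $S_2=\{1\}$; a final size-and-parity bookkeeping then shows that $B_0^2$ must be full rank with $a_1^2=a_\infty^2$ --- hence invertible --- that $B_0^1$ is full rank and non-invertible, that $B_1^1$ and $B_\infty^2$ are each strictly injective or each strictly surjective (the ``mixed'' option being killed by the direct rank computation), and that $B_0^1$ acquires the same type. This is exactly the dichotomy (S-1) versus (S-2).

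The main obstacle is this last point. The hypothesis $i(\Dd)=1$ is a constraint on the entire $6\times 6$ block matrix $\Dd(K_1,K_2)$, whereas $\KT(\Dd)$, $\CT(\Dd)$ and the one conditional pair of extra summands only give lower bounds for $k(\Dd)$ and $c(\Dd)$; in the few surviving configurations those lower bounds are not yet $\ge 2$, so one has no recourse but to reduce the auxiliary blocks to a normal form and carry out an explicit --- elementary but lengthy --- rank computation, tracking the six size parameters $a_\bullet^i$, their parities, and the three nilpotent cyclic compositions at once. Keeping that computation organized so that the case split stays finite and manageable --- for instance by completely pinning down the ``almost full rank'' knot $K_2$ before reading off the constraints on $K_1$ --- is the real work; the degenerate genus-zero cases are disposed of immediately by their membership in (G).
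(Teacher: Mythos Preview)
Your plan is essentially the paper's: assume neither knot is full-rank, use $\ok(\Dd)+\oc(\Dd)\le 1$ together with the propagation rule ($\bullet\in S_1\Rightarrow B_{\imath(\bullet)}^2$ full-rank, indeed invertible since both products $k_\bullet^1k_{\imath(\bullet)}^2$ and $c_\bullet^1c_{\imath(\bullet)}^2$ must vanish) and parity to cut down the possibilities, then invoke the extra summands. The paper organises the split by the value of $(\ok,\oc)\in\{(0,0),(1,0),(0,1)\}$ rather than by $(S_1,S_2)$, but the content is the same.

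Where you diverge is in the endgame: the ``direct rank computation of $\Dd$'' that you flag as the main obstacle is never needed. Once $B_0^2$ is shown invertible you may set $A_0^2=D_0^2=0$ by an admissible change of basis. If $B_0^1$ were injective you could also set $D_0^1=0$, which unlocks the extra cokernel summand $\Coker(B_\infty^1)\otimes\Coker(B_\infty^2)$; since $c_\infty^1>0$ this forces $c_\infty^2=0$, and then $a_\infty^2=a_1^2\le a_0^2-1$ together with $c_1^2=1$ gives $k_1^2=0$, making $K_2$ full-rank --- a contradiction. So $B_0^1$ is strictly surjective, and then $k_0^1k_\infty^2=0$ with $k_0^1>0$ forces $B_\infty^2$ injective: this is (S-1). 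The case $(\ok,\oc)=(0,0)$ is handled the same way, the extra summand yielding $c_\infty^1c_\infty^2\ge 2$ via the size inequality $c_\infty^2>k_\infty^2>0$. Thus both the ``mixed'' option and the possibility that $B_0^1$ is invertible are killed by the extra summand plus size bookkeeping, not by opening up $\Dd$.
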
  
\begin{proof}
We assume that $(K_1,K_2)$ is a special pair, while none of $K_1$ and $K_2$ is full-rank.
Let us first assume that both $\ok(\Dd)$ and $\oc(\Dd)$ are zero.
 From the above assumption we find
$k_\bullet^1k_{\imath(\bullet)}^2=c_\bullet^1c_{\imath(\bullet)}^2=0$ for 
$\bullet=0,1,\infty$. If $B_\bullet^1$ is not a full rank matrix then both $c_\bullet^1$ 
and $k_\bullet^1$ are non-zero. From here $k_{\imath(\bullet)}^2=c_{\imath(\bullet)}^2=0$,
i.e. $B_{\imath(\bullet)}^2$ is invertible. Since the parity of $a_0^2$ is different from the 
parity of $a_1^2$ and $a_\infty^2$, the matrices $B_1^2$ and $B_\infty^2$ can not be 
square matrices. Thus $\imath(\bullet)=0$ and $\bullet=\infty$. In other words, we 
conclude that $B_0^1$ and $B_1^1$ are full-rank and $B_0^2$ is invertible, 
while $B^2_\infty$ is not full-rank.
Similarly, we may conclude that $B_1^2$ is full-rank and $B_0^1$ is invertible, while 
$B_\infty^1$ is not full-rank.
Moreover, since $c_1^1c_1^2=k_1^1k_1^2=0$, precisely one of $B_1^1$ and $B_1^2$
is injective, and the other one is surjective. Without loosing on generality we may thus 
assume that:
\begin{itemize}
\item  $B_0^1$ and $B_0^2$ are invertible,
$B_1^1$ is injective and  $B_1^2$ is surjective.
\item None of $B_\infty^1$ and $B_\infty^2$ is full-rank.
\end{itemize} 
In particular, $k_\infty^1>c_\infty^1>0$ and $c_\infty^2>k_\infty^2>0$.
Since $B_0^1$ and $B_0^2$ are both invertible we may assume that $D_0^1=0$ and
$D_0^2=0$. From here the cokernel of $\Dd$ includes a subspace isomorphic 
to $\Coker(B_\infty^1)\otimes\Coker(B_\infty^2)$, which is of size 
$c_\infty^1c_\infty^2\geq 2$. This implies that $(K_1,K_2)$ is not special.\\
From this contradiction, 
we conclude that one of $\ok(\Dd)$ and $\oc(\Dd)$ is non-zero. Suppose 
that $\oc(\Dd)=1$ and $\ok(\Dd)=0$. For some $\bullet\in\{0,1,\infty\}$ we thus
have $c_\bullet^1=c_{\imath(\bullet)}^2=1$ while $k_\bullet^1k_{\imath(\bullet)}^2=0$ 
and for $\star\neq \bullet$ we have 
$c_\star^1 c_{\imath(\star)}^2=k_\star^1 k_{\imath(\star)}^2=0$.
Without loosing on generality we may assume that $k_\bullet^1=0$. Thus $B_\bullet^1$
is injective with a $1$-dimensional cokernel. In particular, the parity of the number of rows
and the number of columns for $B_\bullet^1$ are different, i.e. $\bullet\neq 0$.
Thus $c_0^1c_\infty^2=k_0^1k_\infty^2=0$. Since $B_\infty^2$ is not a square 
matrix, at least one of $c_\infty^2$ and $k_\infty^2$ is non-zero, implying that at least
one of $c_0^1$ and $k_0^1$ is zero, i.e. $B_0^1$ is full-rank. The assumption that
$K_1$ is not full-rank implies that $B_\star^1$ is not full-rank, where 
$\{\star\}=\{1,\infty\}\setminus\{\bullet\}$. From here $c_\star^1,k_\star^1>0$.
Together with $c_\star^1c_{\imath(\star)}^2=k_\star^1k_{\imath(\star)}^2=0$ this implies 
that $c_{\imath(\star)}^2=k_{\imath(\star)}^2=0$, i.e. $B_{\imath(\star)}^2$ is invertible.
Thus, $\imath(\star)=0, \star=\infty$ and $\bullet=1$. 
We thus conclude
\begin{itemize}
\item  $B_0^2$ is invertible,
$B_0^1$ is full-rank, $B_1^1$ is injective and $B_\infty^1$ is not full-rank.
\item $c_1^1=c_1^2=1$.
\end{itemize} 
Since $B_0^2$ is invertible, we may assume that $A_0^2=D_0^2=0$.
If $B_0^1$ is injective, we may also assume that $D_0^1=0$ and that 
$\Coker(\Dd)$  includes a subspace isomorphic to 
$\Coker(B_\infty^1)\otimes\Coker(B_\infty^2)$ and of size $c_\infty^1c_\infty^2$.
Since $c_\infty^1\neq 0$ we conclude that $B_\infty^2$ is surjective.
From here $a_\infty^2=a_1^2\leq a_0^2-1$ and 
$1-k_1^2=c_1^2-k_1^2=a_0^2-a_\infty^2\geq 1$.
We thus find $k_1^2=0$ and $K_2$ is full-rank, a contradiction.
Thus $k_0^1>0$ and $c_0^1=0$. From $k_0^1k_\infty^2=0$ we find
 $k_\infty^2=0$, i.e. $B_\infty^2$ is injective and the conditions of  (S-1) are 
 satisfied.
A similar argument reduces the case $\ok(\Dd)=1$ and $\oc(\Dd)=0$ to (S-2).
\end{proof}
\begin{prop}\label{prop:splicing-with-full-rank}
Given the pair of knots $(K_1,K_2)$ where $K_1$ is full-rank and 
$(\circ,\bullet,*)\in\{(0,1,\infty),(1,\infty,0),(\infty,0,1)\}$,
\begin{itemize}
\item[(K)] If $B_\circ^1,B_\bullet^1$ are injective and $B_*^1$ is surjective then 
\begin{displaymath}
\begin{split}
&c(\Dd)\geq c_\bullet^1c_{\imath(\bullet)}^2+c_\circ^1c_{\imath(\circ)}^2\ \ \text{and}\ \
k(\Dd)\geq k(X_\bullet^1)k(B_{\imath(*)}^2X_{\imath(\bullet)}^2).
\end{split}
\end{displaymath}
\item[(C)] If $B_\circ^1,B_\bullet^1$ are surjectiveand  $B_*^1$ is injective then 
\begin{displaymath}
\begin{split}
&k(\Dd)\geq k_\bullet^1k_{\imath(\bullet)}^2+k_\circ^1 k_{\imath(\circ)}^2\  \text{and}\ 
c(\Dd)\geq c(X_\bullet^1)c(X_{\imath(\bullet)}^2B_{\imath(*)}^2).
\end{split}
\end{displaymath}
\end{itemize} 
\end{prop}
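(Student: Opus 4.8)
The plan is to treat the four inequalities in two groups. The ``$c(\Dd)\ge\dots$'' estimate in (K) and the ``$k(\Dd)\ge\dots$'' estimate in (C) are immediate. Recall from the discussion preceding Proposition~\ref{prop:all-special-pairs} that for \emph{any} pair $(K_1,K_2)$ there are subspace inclusions $\KT(\Dd)\subseteq\Ker(\Dd)$ and $\CT(\Dd)\subseteq\Coker(\Dd)$, so $k(\Dd)\ge\ok(\Dd)=\sum_{\bullet'\in\{0,1,\infty\}}k_{\bullet'}^1k_{\imath(\bullet')}^2$ and $c(\Dd)\ge\oc(\Dd)=\sum_{\bullet'\in\{0,1,\infty\}}c_{\bullet'}^1c_{\imath(\bullet')}^2$. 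Since $\imath$ permutes $\{0,1,\infty\}$ and $\{\circ,\bullet,*\}=\{0,1,\infty\}$, these sums also range over $\bullet'\in\{\circ,\bullet,*\}$. In case (K), surjectivity of $B_*^1$ forces $c_*^1=0$, hence $c_\bullet^1c_{\imath(\bullet)}^2+c_\circ^1c_{\imath(\circ)}^2=\oc(\Dd)\le c(\Dd)$; in case (C), injectivity of $B_*^1$ forces $k_*^1=0$, hence $k_\bullet^1k_{\imath(\bullet)}^2+k_\circ^1k_{\imath(\circ)}^2=\ok(\Dd)\le k(\Dd)$. The two bounds involving the nilpotent matrices $X_\bullet$ carry the real content and are obtained by feeding the standard forms of Lemma~\ref{lem:standarndard-form} into the block matrix of Proposition~\ref{prop:splicing-formula}.

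For the $X$-bound in (K): since $B_\circ^1,B_\bullet^1$ are injective and $B_*^1$ is surjective, Lemma~\ref{lem:standarndard-form}(1) applies to $K_1$, and after an admissible change of basis — which preserves the block form $\bigl(\begin{smallmatrix}0&0\\I&0\end{smallmatrix}\bigr)$ of each $F_\bullet$, hence keeps Proposition~\ref{prop:splicing-formula} valid, and which does not change $k(\Dd)$ or $c(\Dd)$ as it merely changes bases on the source and target of $\Dd$ — the matrices $\tau_\circ^1,\tau_\bullet^1,\tau_*^1$ acquire the explicit shapes in~(\ref{eq:SF-1}), where almost every block is $0$ or $I$ and $X_\bullet^1$ appears as a distinguished sub-block of $\tau_*^1$. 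Reading off the resulting $A_\bullet^1,B_\bullet^1,C_\bullet^1,D_\bullet^1$ and substituting them into the $6\times6$ block matrix $\Dd(K_1,K_2)$, the matrix collapses substantially. I would then isolate a subset of its block-rows and block-columns on which $\Dd$ is block upper-triangular with one diagonal block equivalent to $X_\bullet^1\otimes\bigl(B_{\imath(*)}^2X_{\imath(\bullet)}^2\bigr)$; here the cyclic triple-product structure of $X_\bullet$ together with the vanishing produced by surjectivity of $B_*^1$ is exactly what turns the $K_2$-factor into the composite $B_{\imath(*)}^2X_{\imath(\bullet)}^2$. Block-triangularity then yields an injection $\Ker\bigl(X_\bullet^1\otimes B_{\imath(*)}^2X_{\imath(\bullet)}^2\bigr)\hookrightarrow\Ker(\Dd)$, and over a field $\Ker(M^1\otimes M^2)\supseteq\Ker(M^1)\otimes\Ker(M^2)$, so $k(\Dd)\ge k(X_\bullet^1)\,k\bigl(B_{\imath(*)}^2X_{\imath(\bullet)}^2\bigr)$. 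This must be carried out for each of the (at most two) permutations $(\circ,\bullet,*)$ realizable by a full-rank knot, since the layout of $\Dd$ in Proposition~\ref{prop:splicing-formula} is not manifestly cyclic.

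The $X$-bound in (C) is Poincar\'e-dual to that of (K). The cleanest route is to use $\Coker(\Dd)\cong\Ker(\Dd^{\mathrm T})$ together with the duality properties of $\ov\HFT$ over $\Fbb$ exploited in \cite{Ef-splicing}, which identify $\Dd(K_1,K_2)^{\mathrm T}$, up to equivalence, with a matrix of the type handled in (K) but with ``injective'' and ``surjective'' interchanged and each $X$ replaced by its transpose; then $c(\Dd)=k(\Dd^{\mathrm T})\ge c(X_\bullet^1)\,c\bigl(X_{\imath(\bullet)}^2B_{\imath(*)}^2\bigr)$. Absent a clean transpose-equivalence of $\Dd$, one simply repeats the previous paragraph using Lemma~\ref{lem:standarndard-form}(2) and~(\ref{eq:SF-2}), now extracting a block-lower-triangular piece of $\Dd$ with diagonal block equivalent to $X_\bullet^1\otimes\bigl(X_{\imath(\bullet)}^2B_{\imath(*)}^2\bigr)$ and invoking $\Coker(M^1\otimes M^2)\supseteq\Coker(M^1)\otimes\Coker(M^2)$.

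I expect the main obstacle to be bookkeeping rather than conceptual: substituting the several standard forms of Lemma~\ref{lem:standarndard-form} into the large matrix $\Dd$, reconciling the cyclic index conventions hidden in the definitions of $X_\bullet$ and of $\imath$ with the fixed row/column ordering of Proposition~\ref{prop:splicing-formula}, and verifying that after the substitution the chosen sub-block is genuinely triangular, so that the surviving off-diagonal entries cannot spoil the kernel/cokernel estimate — with the surjectivity (resp.\ injectivity) of $B_*^1$ having to be used in precisely the right place to make the relevant blocks of $\Dd$ vanish. As in the proof of Lemma~\ref{lem:standarndard-form}, each individual verification is routine linear algebra, but there are many entries to track.
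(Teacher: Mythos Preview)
Your outline matches the paper's proof closely. The easy inequalities are handled exactly as you say, and for the $X$-bounds the paper likewise feeds the standard forms of Lemma~\ref{lem:standarndard-form} into the block matrix of Proposition~\ref{prop:splicing-formula}, then performs explicit cancellations (via the identity blocks produced by $D_\circ^1=D_\bullet^1=A_*^1=0$ in case (K), resp.\ $A_\circ^1=A_\bullet^1=D_*^1=0$ in case (C)) and a further row operation, working only the case $(\circ,\bullet,*)=(0,1,\infty)$ in detail and declaring the others analogous.

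One small correction to your expectations: the reduced matrix does \emph{not} contain a single diagonal block equivalent to $X_\bullet^1\otimes\bigl(B_{\imath(*)}^2X_{\imath(\bullet)}^2\bigr)$. What actually emerges (in case (K), $(\circ,\bullet,*)=(0,1,\infty)$) is a column whose only nonzero blocks are $X_1^1\otimes I$ and $(I+X_1^1X_1^1)\otimes B_0^2X_1^2$; the subspace $\Ker(X_1^1)\otimes\Ker(B_0^2X_1^2)$ is killed by both, giving the bound. The dual picture in (C) produces a row with nonzero blocks $X_0^1\otimes(I+X_0^2X_0^2)$ and $I\otimes X_\infty^2B_0^2$. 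So your ``block-triangular with a tensor diagonal block'' picture is slightly too optimistic, but the mechanism you describe still delivers the inequality. Also, your aside about ``at most two'' realizable permutations is not quite right (the parity constraint on $a_0,a_1,a_\infty$ does not by itself rule out any of the three), and the transpose/duality shortcut you float for (C) is not what the paper uses --- it repeats the direct computation with~(\ref{eq:SF-2}) --- but your fallback is exactly that.
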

\begin{proof} 
The first claim in either of cases (K) and  (C) is already observed in our earlier discussions. 
We  thus need to prove the second claim in each one of the above two cases.
The proofs  are very similar and we will only go through 
the proof for $(\circ,\bullet,*)=(0,1,\infty)$. In fact, the proof of claim (C) for 
$(\circ,\bullet,*)$ is almost identical to the proof of claim (K) for 
$(\imath(\bullet),\imath(\circ),\imath(*))$ because of the symmetry in the block 
presentation of $\Dd$.\\
We assume $(\circ,\bullet,*)=(0,1,\infty)$. In case (K),
after an admissible change of basis, we may assume that
$\tau_0(K_1), \tau_1(K_1)$ and $\tau_\infty(K_1)$ take the standard form 
of (\ref{eq:SF-1}). 
Since $D_0^1=D_1^1=A_\infty^1=0$, the $(3,2)$ entry  and the $(6,6)$ entry 
of the matrix $\Dd$ are both the identity matrix.
The matrix $\Dd$ is thus equivalent to the matrix
\begin{displaymath}
\colvec[.8]{\begin{array}{c}
D_\infty^1B_1^1\otimes B_1^2A_0^2+\\ B_1^1A_0^1\otimes D_\infty^2 B_1^2
\end{array} &
B_1^1B_0^1\otimes I&D_\infty^1 A_1^1\otimes B_1^2A_0^2&I\otimes B_1^2B_0^2\\
&&&\\
%%%%%%%%%%%%%%%%%%%%%%%%%%%%%%%%%%%%%%%%%%%%%%%%%%%
I\otimes B_\infty^2 B_1^2  &
0& 0&B_0^1B_\infty^1\otimes I\\
&&&\\
%%%%%%%%%%%%%%%%%%%%%%%%%%%%%%%%%%%%%%%%%%%%%%%%%%%
B_\infty^1 B_1^1\otimes I&I\otimes B_0^2B_\infty^2& B_\infty^1 A_1^1\otimes I
&X_1^1B_\infty^1\otimes B_0^2X_1^2\\
&&&\\
%%%%%%%%%%%%%%%%%%%%%%%%%%%%%%%%%%%%%%%%%%%%%%%%%%%
D_\infty^1 B_1^1\otimes D_1^2A_0^2&0&\begin{array}{c}I\otimes I+\\
D_\infty^1 A_1^1\otimes D_1^2A_0^2\end{array}&I\otimes D_1^2B_0^2\\
}.
\end{displaymath}
Replacing the block forms for $\tau_\star(K_1)$ gives the following presentation of the 
above matrix 
\begin{displaymath}
\colvec[.8]{
\star & \star & I\otimes I&0&0& \star & I\otimes B_1^2B_0^2&\star&\star\\
%%%%%%%%%%%%%%%%%%%%%%%%%%%%
\star& \star &0&0&0& \star &0&\star &\star\\
%%%%%%%%%%%%%%%%%%%%%%%%%%%%
\star& \star& 0&0&0& \star &0&\star&\star\\
%%%%%%%%%%%%%%%%%%%%%%%%%%%
\star &\star &0&0&0&\star& X_1^1\otimes I & \star & \star \\
%%%%%%%%%%%%%%%%%%%%%%%%%%%
\star &\star  &0&0&0&\star &0& \star&\star\\
%%%%%%%%%%%%%%%%%%%%%%%%%%%
\star &\star & I\otimes B_0^2B_\infty^2& 0&0&\star 
&X_1^1X_1^1\otimes B_0^2X_1^2&\star &\star\\ 
%%%%%%%%%%%%%%%%%%%%%%%%%%%
\star&\star&0& I\otimes I& 0&\star&\star&\star&\star\\
%%%%%%%%%%%%%%%%%%%%%%%%%%%
\star&\star&0& 0&I\otimes I&\star&0&\star&\star\\
%%%%%%%%%%%%%%%%%%%%%%%%%%%
\star& \star& 0&0&0&\star &0&\star& \star\\
}.
\end{displaymath}
After subtracting $I\otimes B_0^2B_\infty^2$ times the first row from  the fifth row, 
the identity matrices which appear in the entries $(1,3), (7,4)$ and $(8,5)$ of the above 
matrix become the only non-zero entries of their respective columns.
They  may thus be used for the cancellation of the third, the fourth and the fifth columns
against the first, the seventh and the eighth rows. We thus arrive at a $6\times 6$
matrix equivalent to $\Dd$, which is of the form   
\begin{displaymath}
\colvec[.8]{
\star& \star & \star &0&\star & \star \\
%%%%%%%%%%%%%%%%%%%%%%%
\star& \star& \star&0& \star & \star\\
%%%%%%%%%%%%%%%%%%%%%%%
\star & \star & \star & X_1^1\otimes I & \star & \star \\
%%%%%%%%%%%%%%%%%%%%%%%
 \star &\star & \star &0&  \star & \star \\
%%%%%%%%%%%%%%%%%%%%%%%
\star &\star & \star &(I+X_1^1X_1^1)\otimes B_0^2X_1^2&\star &\star\\ 
%&&\\
\star& \star&\star &0& \star & \star\\
}.
\end{displaymath}
Since the kernel of $\Dd$ includes a subspace 
which is isomorphic to  the kernel corresponding to the 
fourth column we find
$k(\Dd)\geq k(X^1_1)k(B_0^2X_1^2)$.\\
For case (C),
using Lemma~\ref{lem:standarndard-form} choose the standard block form of (\ref{eq:SF-2})
for $K_1$. In particular, $A_0^1,A_1^1$ and $D_\infty^1$ are all zero. The entries 
$(3,2)$ and $(5,4)$ of $\Dd$ are thus identity matrices which may be used
for cancellation. 
Add $B_\infty^1B_1^1\otimes B_0^2 X_1^2$ times the second row 
of the resulting matrix to its third row, add
$B_\infty^1B_1^1\otimes D_0^2 X_1^2$ times the second row 
 to the last row, and note that $B_1^1 D_1^1=0$
to  arrive at the following matrix, which is equivalent to $\Dd$: 
\begin{displaymath}
\colvec[.8]{
0&B_1^1B_0^1\otimes I&I\otimes B_1^2B_0^2&0\\
&&&\\
%%%%%%%%%%%%%%%%%%%%%%%%%%%%%%%%%%%%%%%%%%%%%%%%%%%
I\otimes B_\infty^2 B_1^2 &
D_1^1B_0^1\otimes B_\infty^2 A_1^2&B_0^1B_\infty^1\otimes I&
B_0^1 A_\infty^1\otimes I\\
&&&\\
%%%%%%%%%%%%%%%%%%%%%%%%%%%%%%%%%%%%%%%%%%%%%%%%%%%
B_\infty^1 B_1^1\otimes (I+X_0^2X_0^2)
&I\otimes B_0^2B_\infty^2
& D_0^1 B_\infty^1\otimes B_0^2A_\infty^2
&D_0^1 A_\infty^1\otimes B_0^2A_\infty^2\\
&&&\\
%%%%%%%%%%%%%%%%%%%%%%%%%%%%%%%%%%%%%%%%%%%%%%%%%%%
B_\infty^1B_1^1\otimes D_0^2 X_1^2B_\infty^2 B_1^2&I\otimes D_0^2B_\infty^2 & 
D_0^1 B_\infty^1\otimes D_0^2A_\infty^2
&\begin{array}{c}
I\otimes I+\\ D_0^1A_\infty^1\otimes D_0^2 A_\infty^2 \end{array}
}.
\end{displaymath}
Replacing the block forms of (\ref{eq:SF-2}) for $\tau_0(K_1),\tau_1(K_1)$ and 
$\tau_\infty(K_1)$ we arrive at a matrix of the form
\begin{displaymath}
\colvec[.8]{
0&0&0&0&I\otimes I& I\otimes B_1^2B_0^2 &0&0&0\\
%%%%%%%%%%%%%%%%%%%%%%%%%%%%%%%%%%%%%%%%%%%%%%%%%%%
 \star  & \star & \star & \star &0&\star &\star &\star &\star\\
%%%%%%%%%%%%%%%%%%%%%%%%%%%%%%%%%%%%%%%%%%%%%%%%%%%
 \star  & \star & \star & \star &0&\star &\star &\star &\star\\
%%%%%%%%%%%%%%%%%%%%%%%%%%%%%%%%%%%%%%%%%%%%%%%%%%%
 \star  & \star & \star & \star &0&\star &\star &\star &\star\\
%%%%%%%%%%%%%%%%%%%%%%%%%%%%%%%%%%%%%%%%%%%%%%%%%%%
 \star  & \star & \star & \star &0&\star &\star &\star &\star\\
%%%%%%%%%%%%%%%%%%%%%%%%%%%%%%%%%%%%%%%%%%%%%%%%%%%
0&X_0^1\otimes (I+X_0^2X_0^2)& 0&0& I\otimes B_0^2B_\infty^2 &0
& 0 &0 &0\\
%%%%%%%%%%%%%%%%%%%%%%%%%%%%%%%%%%%%%%%%%%%%%%%%%%%
 \star  & \star & \star & \star &0&\star &\star &\star &\star\\
%%%%%%%%%%%%%%%%%%%%%%%%%%%%%%%%%%%%%%%%%%%%%%%%%%%
 \star  & \star & \star & \star &0&\star &\star &\star &\star\\
%%%%%%%%%%%%%%%%%%%%%%%%%%%%%%%%%%%%%%%%%%%%%%%%%%%
 \star  & \star & \star & \star &0&\star &\star &\star &\star\\
},
\end{displaymath}
which is in turn equivalent to a matrix of the form
\begin{displaymath}
\colvec[.8]{
%%%%%%%%%%%%%%%%%%%%%%%%%%%%%%%%%%%%%%%%%%%%%%%%%%%
 \star  & \star & \star & \star &\star &\star &\star &\star\\
%%%%%%%%%%%%%%%%%%%%%%%%%%%%%%%%%%%%%%%%%%%%%%%%%%%
 \star  & \star & \star & \star &\star &\star &\star &\star\\
%%%%%%%%%%%%%%%%%%%%%%%%%%%%%%%%%%%%%%%%%%%%%%%%%%%
 \star  & \star & \star & \star &\star &\star &\star &\star\\
%%%%%%%%%%%%%%%%%%%%%%%%%%%%%%%%%%%%%%%%%%%%%%%%%%%
 \star  & \star & \star & \star &\star &\star &\star &\star\\
%%%%%%%%%%%%%%%%%%%%%%%%%%%%%%%%%%%%%%%%%%%%%%%%%%%
0&X_0^1\otimes (I+X_0^2X_0^2)& 0&0& I\otimes X_\infty^2 B_0^2&0&0&0\\
%%%%%%%%%%%%%%%%%%%%%%%%%%%%%%%%%%%%%%%%%%%%%%%%%%%
 \star  & \star & \star & \star &\star &\star &\star &\star\\
%%%%%%%%%%%%%%%%%%%%%%%%%%%%%%%%%%%%%%%%%%%%%%%%%%%
 \star  & \star & \star & \star &\star &\star &\star &\star\\
%%%%%%%%%%%%%%%%%%%%%%%%%%%%%%%%%%%%%%%%%%%%%%%%%%%
 \star  & \star & \star & \star &\star &\star &\star &\star\\
},
\end{displaymath}
In particular, we conclude $c(\Dd)\geq c(X_0^1)c(X_\infty^2B_0^2)$.
This completes the proof of case (C) when $(\circ,\bullet,*)=(0,1,\infty)$.
\end{proof}
\subsection{The special cases (S-1) and (S-2)}
\begin{lem}\label{lem:special-cases}
If $(K_1,K_2)$ is a special pair of type (S-1) or (S-2) then one of the knots $K_1$ or 
$K_2$ is trivial.
\end{lem}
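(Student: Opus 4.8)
The plan is to handle the two subcases separately; since (S-2) is carried onto (S-1) by the index permutation $\imath$ together with the exchange of kernels and cokernels (equivalently, by replacing the standard form (\ref{eq:SF-1}) of Lemma~\ref{lem:standarndard-form} with (\ref{eq:SF-2})), it suffices to treat (S-1) in detail. So assume $(K_1,K_2)$ is a special pair with $B_0^2$ invertible, $B_0^1$ surjective, and $B_1^1$, $B_\infty^2$ injective. The analysis in the proof of Proposition~\ref{prop:all-special-pairs} that produced case (S-1) in fact shows, in addition, that $c_1^1=c_1^2=1$, $k_0^1>0$, $c_0^1=c_0^2=k_0^2=k_1^1=k_\infty^2=0$, and $a_\infty^2=a_1^2$. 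Writing $\Dd=\Dd(K_1,K_2)$, these give $\oc(\Dd)=c_1^1c_1^2=1$ and $\ok(\Dd)=0$, so the hypothesis $i(\Dd)=1$ already forces $c(\Dd)=1$ and $k(\Dd)=0$; in particular $\Dd$ has no cokernel or kernel beyond the single copy of $\Coker(B_1^1)\otimes\Coker(B_1^2)$.

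Next I would note that, within case (S-1), the statements ``$K_1$ is trivial'' and ``$a_\infty^1=0$'' are equivalent (by Lemma~\ref{lem:a-is-positive}: if $a_\infty^1=0$ then $B_1^1$ is an empty matrix, so all $B_\bullet(K_1)$ vanish and $K_1$ has genus zero; conversely genus zero forces $B_1^1=0$, hence $a_\infty^1=0$), and likewise ``$K_2$ is trivial'' $\Leftrightarrow$ ``$a_1^2=0$''. So it suffices to rule out the possibility that $a_\infty^1>0$ and $a_1^2>0$ simultaneously. Assume it holds. Then all block sizes occurring in $\Dd$ are positive, and I would normalize: using that $B_0^2$ is invertible, change basis on $\Hbb_\bullet(K_2)$ so that $A_0^2=D_0^2=0$ and $X_0^2,X_1^2,X_\infty^2$ are expressed through $B_1^2,B_\infty^2$ alone; since $B_0^1$ is surjective, arrange $D_0^1=0$; and using $k_1^1=0$ together with the one-dimensional cokernels of $B_1^1$ and $B_1^2$, put $\tau_0(K_1),\tau_1(K_1),\tau_\infty(K_1)$ into a block shape patterned on (\ref{eq:SF-1}) but carrying one extra slack row and column.

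Substituting all of this into the $6\times 6$ matrix $\Dd$ of Proposition~\ref{prop:splicing-formula} annihilates the blocks containing $A_\infty^1$, $D_1^1$, $D_0^1$, $A_0^2$, $D_0^2$, leaving several identity blocks which — exactly as in the proof of Proposition~\ref{prop:splicing-with-full-rank} — can be used to cancel whole rows and columns against one another. What survives is a much smaller matrix equivalent to $\Dd$, one of whose blocks is, up to equivalence, a tensor expression assembled from $X_\infty^1$ and the non-full-rank matrices $B_\infty^1$, $B_1^2$. Since $X_\infty^1$ is nilpotent on the nonzero space $\Fbb^{a_\infty^1}$ (Lemma~\ref{lem:X-nilpotent}), its kernel is nonzero, and reading off the kernel (or, in the $X_0^2$-flavoured run of the same computation, the cokernel) contributed by this block yields either $k(\Dd)\ge 1$ or $c(\Dd)\ge 2$. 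Either inequality contradicts $k(\Dd)=0$, $c(\Dd)=1$. Hence $a_\infty^1>0$ and $a_1^2>0$ cannot both occur, so one of $K_1$, $K_2$ is trivial; running the identical argument with (\ref{eq:SF-2}) and the roles of kernels and cokernels exchanged settles case (S-2).

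The step I expect to be the real obstacle is the bespoke reduction of $\Dd$ in the third paragraph: because $B_\infty^1$ is neither injective nor surjective, Lemma~\ref{lem:standarndard-form} cannot be applied to $K_1$ directly, so the extra slack row and column must be tracked through every cancellation, and one must check that the leftover block genuinely carries a nilpotent tensor factor acting on a nonzero space. Getting all the block sizes to line up — using the parity relation among $a_0,a_1,a_\infty$, the equalities $c_1^1=c_1^2=1$, and $a_\infty^2=a_1^2$ — is where the bookkeeping is delicate and where an error would most easily slip in.
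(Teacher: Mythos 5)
Your plan follows the same route as the paper's proof: put $\tau_\bullet(K_1)$ and $\tau_\bullet(K_2)$ into standard block form by an admissible change of basis (so that $A_0^2,D_0^2,D_\infty^2$ and $A_0^1,D_1^1$ vanish), substitute into the $6\times 6$ block matrix $\Dd$ of Proposition~\ref{prop:splicing-formula}, cancel identity blocks, and derive a contradiction with specialness from the nilpotency of the $X$-matrices (Lemma~\ref{lem:X-nilpotent}). But the decisive step is exactly the one you defer: in the paper, the explicit forms (\ref{eq:SF-3}) and (\ref{eq:SF-4}), three cancellations, and two row operations reduce $\Dd$ to a matrix in which one column has entries $I\otimes X_\infty^2$, $X_\infty^1\otimes I$ and $(I+X_\infty^1X_\infty^1)\otimes X_\infty^2$ as its only nonzero blocks, whence $k(\Dd)\geq 2\,k(X_\infty^1)\,k(X_\infty^2)\geq 2$, contradicting $k(\Dd)+c(\Dd)\leq 1$ outright. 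Your third paragraph gestures at ``a tensor expression assembled from $X_\infty^1$, $B_\infty^1$, $B_1^2$'' with conclusion ``$k(\Dd)\geq 1$ or $c(\Dd)\geq 2$'' but does not carry out the reduction, and you yourself identify it as the obstacle; since this computation is the entire content of the lemma, the proposal is a plan rather than a proof.

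There is also a genuine logical gap in your first paragraph. The facts you import from the proof of Proposition~\ref{prop:all-special-pairs} ($c_1^1=c_1^2=1$, $k_0^1>0$, hence $\oc(\Dd)=1$, $c(\Dd)=1$, $k(\Dd)=0$) were derived there under the standing assumption that neither knot is full-rank; they are not consequences of the (S-1) conditions themselves. The lemma, however, is invoked a second time at the end of the proof of Theorem~\ref{thm:main}, in a situation where $K_1$ is full-rank (and the roles of the two knots may be interchanged), so your argument must run from the (S-1)/(S-2) hypotheses alone. Under (S-1) alone one gets $c_1^1\geq 1$ (injectivity of $B_1^1$ plus the parity relation among $a_0^1,a_1^1,a_\infty^1$), but $c_1^2$ can be zero (e.g.\ if $B_1^2$ is surjective), so $\oc(\Dd)$ may vanish and your normalization ``$c(\Dd)=1$, $k(\Dd)=0$'' can fail; in that case the disjunction ``$k(\Dd)\geq 1$ or $c(\Dd)\geq 2$'' does not produce a contradiction, and one needs the stronger bound $k(\Dd)\geq 2$ (respectively $c(\Dd)\geq 2$ for (S-2)) that the explicit block computation provides. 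Your reduction of (S-2) to (S-1) by the symmetry exchanging kernels and cokernels is plausible and consistent with how the paper treats case (C) versus case (K) in Proposition~\ref{prop:splicing-with-full-rank}, but it too is asserted rather than checked; the paper handles (S-2) by a parallel computation ending in $c(\Dd)\geq 2$.
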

\begin{proof}
Suppose otherwise that $(K_1,K_2)$ is a special pair of type (S-1)
and that both $K_1$ and $K_2$ are non-trivial. After an admissible 
change of basis,  assume that
\begin{equation}\label{eq:SF-3}
\begin{split}
&\tau_0^2=\colvec{0&0&\vline &I&0\\
0&0&\vline &0&I\\
\hline 
I&0&\vline &0&0\\
0&I&\vline &0&0}, \ \ 
\tau_\infty^2=\colvec{0&0&\vline &I\\ 
0&\star&\vline &0\\ \hline
I&0&\vline &0}\ \ 
\text{and}\ \ \ \ 
\tau_1^2=\colvec{\star &\vline &X_\infty^2&  \star   \\ \hline
 \star  &\vline & \star & \star  \\ 
 \star  &\vline & \star & \star   }
\end{split}
\end{equation}
In particular, $A_0^2,D_0^2$ and $D_\infty^2$ are zero. We may also assume 
that 
\begin{equation}\label{eq:SF-4}
\begin{split}
&\tau_0^1=\colvec{0&\vline &I&0\\
\hline 
I&\vline &0&0\\
0&\vline &0&\star}, \ \ 
\tau_1^1=\colvec{0&0&\vline &I\\ 
0&\star &\vline &0\\ \hline
I&0&\vline &0}\ \ 
\text{and}\ \ \ \ 
\tau_\infty^1=\colvec{\star& \star &\vline &X_\infty^1&  \star   \\ 
\star&\star&\vline&\star&\star\\
\hline
 \star&\star  &\vline & \star & \star  \\ 
 \star &\star &\vline & \star & \star   }
\end{split}
\end{equation}
In particular,
$A_0^1$ and $D_1^1$ are zero. The identity matrices which appear as 
entries $(3,2)$, $(5,4)$ and $(6,6)$ in $\Dd(K_1,K_2)$ 
 may be used for cancellation to obtain the equivalent matrix
\begin{displaymath}
\colvec[.8]{
0&B_1^1B_0^1\otimes I&I\otimes B_1^2B_0^2\\
&&&\\
%%%%%%%%%%%%%%%%%%%%%%%%%%%%%%%%%%%%%%%%%%%%%%%%%%%
I\otimes B_\infty^2 B_1^2  &
 0&B_0^1B_\infty^1\otimes I\\
&&&\\
%%%%%%%%%%%%%%%%%%%%%%%%%%%%%%%%%%%%%%%%%%%%%%%%%%%
B_\infty^1 B_1^1\otimes I&I\otimes B_0^2B_\infty^2
&\begin{array}{c} D_0^1 B_\infty^1\otimes B_0^2A_\infty^2\\ 
+X_1^1B_\infty^1\otimes B_0^2X_1^2\\+B_\infty^1A_1^1\otimes D_1^2B_0^2\end{array}
}.
\end{displaymath}
Subtracting $X_1^1B_\infty^1\otimes B_0^2B_\infty^2$ times the first row from the 
third row we arrive at the equivalent matrix
\begin{displaymath}
\colvec[.8]{
0&B_1^1B_0^1\otimes I&I\otimes B_1^2B_0^2\\
&&&\\
%%%%%%%%%%%%%%%%%%%%%%%%%%%%%%%%%%%%%%%%%%%%%%%%%%%
I\otimes B_\infty^2 B_1^2  &
 0&B_0^1B_\infty^1\otimes I\\
&&&\\
%%%%%%%%%%%%%%%%%%%%%%%%%%%%%%%%%%%%%%%%%%%%%%%%%%%
B_\infty^1 B_1^1\otimes I&(I+X_1^1X_1^1)\otimes B_0^2B_\infty^2
&\begin{array}{c} D_0^1 B_\infty^1\otimes B_0^2A_\infty^2\\ 
+B_\infty^1A_1^1\otimes D_1^2B_0^2\end{array}
}.
\end{displaymath}
Replacing the block forms of (\ref{eq:SF-3}) and (\ref{eq:SF-4}), 
the above matrix takes the form
\begin{displaymath}
\colvec[.8]{
0&\star&I\otimes I &0& I\otimes X_\infty^2 & \star&\star&\star\\
0&\star&0&0&0& \star&\star&\star\\
I\otimes X_\infty^2 &\star & 0&0& X_\infty^1\otimes I& \star&\star&\star\\
0&\star&0&0&0& \star&\star&\star\\
X_\infty^1\otimes I& \star& (I+X_\infty^1 X_\infty^1)\otimes I&0&0& \star&\star&\star\\
0&\star&0&0&0& \star&\star&\star\\
\star&\star&\star& I\otimes I&0& \star&\star&\star\\
0&\star &0&0&0& \star&\star&\star
}.
\end{displaymath}
Subtract $(I+X_\infty^1 X_\infty^1)\otimes I$ times the first row from the fifth row 
and use the identity matrices which appear as $(1,3)$ and $(7,4)$ entries of the above 
matrix for cancellation to arrive at the following equivalent matrix
\begin{displaymath}
\colvec[.8]{
0&\star&0& \star&\star&\star\\
I\otimes X_\infty^2 &\star & X_\infty^1\otimes I& \star&\star&\star\\
0&\star&0& \star&\star&\star\\
X_\infty^1\otimes I&\star&(I+X_\infty^1 X_\infty^1)\otimes X_\infty^2&\star&\star&\star\\
0&\star&0& \star&\star&\star\\
0&\star &0& \star&\star&\star
}.
\end{displaymath}
From the above presentation we conclude 
$$k(\Dd)\geq 2k(X_\infty^1)k(X_\infty^2)\geq 2.$$
This contradiction rules out the case (S-1). Ruling out the case (S-2) is similar.
\end{proof}

\section{Incompressible tori in homology spheres}\label{sec:incomp-torus}
\subsection{The main theorem}\label{subsec:main-thm}
\begin{thm}\label{thm:main}
Suppose that  $K_i$ is a non-trivial knot in the homology sphere $Y_i$ for $i=1,2$.  
Let $Y=Y(K_1,K_2)$ denote the three-manifold obtained by splicing the complements 
of $K_1$ and $K_2$. Then the rank of $\widehat{\text{\emph{HF}}}(Y)$ is bigger 
than one.
\end{thm}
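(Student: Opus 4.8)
The plan is to argue by contradiction, using the classification of special pairs assembled in the previous two sections. Since $Y=Y(K_1,K_2)$ is obtained by gluing the homology-sphere knot complements $Y_1\setminus\nbd{K_1}$ and $Y_2\setminus\nbd{K_2}$ along their torus boundaries, it is itself a homology sphere; hence $\ov\HFT(Y;\Fbb)\ne 0$ and $\rank\ov\HFT(Y;\Fbb)\ge 1$, so it suffices to rule out the possibility $\rank\ov\HFT(Y;\Fbb)=1$, i.e.\ the possibility that $(K_1,K_2)$ is a special pair. By Proposition~\ref{prop:all-special-pairs}, if $(K_1,K_2)$ were special then, after possibly interchanging $K_1$ and $K_2$ (which does not change $Y$), one of the three alternatives (G), (S-1), (S-2) would hold.

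Alternatives (S-1) and (S-2) are disposed of at once: by Lemma~\ref{lem:special-cases} each of them forces one of $K_1$, $K_2$ to be trivial, contrary to the hypothesis that both knots are non-trivial. So the whole problem reduces to establishing the following statement: \emph{if $K_1$ is full-rank and both $K_1$ and $K_2$ are non-trivial, then $i(\Dd(K_1,K_2))\ge 2$}. Indeed, this contradicts $i(\Dd)=1$ by Proposition~\ref{prop:splicing-formula}, finishing the proof.

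To prove the displayed claim I would first exploit the parity constraint recorded before Lemma~\ref{lem:a-is-positive}: the integers $a_1^1$, $a_\infty^1$ and $a_0^1+1$ have the same parity, so $B_1^1$ (of size $a_0^1\times a_\infty^1$) and $B_\infty^1$ (of size $a_1^1\times a_0^1$) are never square, while $B_0^1$ may be. Since a full-rank matrix is injective or surjective (and both precisely when square), and since $B_0^1,B_1^1,B_\infty^1$ cannot all three be injective nor all three surjective — in either case the corresponding chain of dimension inequalities would force $a_0^1=a_1^1=a_\infty^1$, violating parity — one can always select a cyclically ordered triple $(\circ,\bullet,*)\in\{(0,1,\infty),(1,\infty,0),(\infty,0,1)\}$ for which the hypothesis of case (K) or of case (C) of Proposition~\ref{prop:splicing-with-full-rank} is satisfied; and when $K_2$ happens also to be full-rank one may in addition apply that proposition to the pair $(K_2,K_1)$, since $i(\Dd(K_1,K_2))=i(\Dd(K_2,K_1))$.

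It remains to convert the inequalities furnished by Proposition~\ref{prop:splicing-with-full-rank} into $i(\Dd)\ge 2$. The key input is Lemma~\ref{lem:X-nilpotent}: because $K_1$ and $K_2$ are non-trivial and $a_\bullet^i>0$ for all $\bullet,i$ (Lemma~\ref{lem:a-is-positive}), each $X_\bullet^i$ is a nilpotent endomorphism of a non-zero space, hence $k(X_\bullet^i)\ge 1$ and $c(X_\bullet^i)\ge 1$; moreover $k(BX)\ge k(X)$ and $c(BX)\ge c(X)$ for any composable $B$, and a strictly injective (resp.\ strictly surjective) full-rank $B_\bullet^1$ contributes a positive $c_\bullet^1$ (resp.\ positive $k_\bullet^1$). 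Feeding these facts into the bounds of Proposition~\ref{prop:splicing-with-full-rank} — choosing, in each sub-case, the triple $(\circ,\bullet,*)$ and, if necessary, the ordering of the pair appropriately — yields simultaneously $k(\Dd)\ge 1$ and $c(\Dd)\ge 1$, so that $i(\Dd)=k(\Dd)+c(\Dd)\ge 2$; in the most symmetric sub-cases one instead gets $k(\Dd)\ge 2$ outright, exactly as in the computation at the end of the proof of Lemma~\ref{lem:special-cases}. I expect the main obstacle to be precisely this last bookkeeping step: one must split case (G) into the finitely many sub-cases according to which of $B_0^1,B_1^1,B_\infty^1$ (and, when relevant, $B_0^2,B_1^2,B_\infty^2$) are injective versus surjective, and check in each that the two contributions to $i(\Dd)$ are simultaneously positive, which in the tightest sub-cases requires the same kind of explicit block row- and column-reductions of $\Dd$ already carried out in the proofs of Proposition~\ref{prop:splicing-with-full-rank} and Lemma~\ref{lem:special-cases}.
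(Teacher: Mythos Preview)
Your overall architecture matches the paper's: argue by contradiction, invoke Proposition~\ref{prop:all-special-pairs}, dispose of (S-1) and (S-2) via Lemma~\ref{lem:special-cases}, and handle (G) with Proposition~\ref{prop:splicing-with-full-rank} and Lemma~\ref{lem:X-nilpotent}. The reduction to case (G) and the parity observation that not all three $B_\bullet^1$ can be simultaneously injective (or surjective) are both correct and used in the paper as well.

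The gap is in your treatment of case (G), which is not ``bookkeeping'' but the heart of the argument. Your plan is to obtain $k(\Dd)\ge 1$ and $c(\Dd)\ge 1$ \emph{simultaneously} from Proposition~\ref{prop:splicing-with-full-rank}. But in case (K) that proposition only yields $k(\Dd)\ge 1$; the accompanying bound $c(\Dd)\ge c_\bullet^1 c_{\imath(\bullet)}^2+c_\circ^1 c_{\imath(\circ)}^2$ can perfectly well be zero, since nothing so far forces $c_{\imath(\bullet)}^2$ or $c_{\imath(\circ)}^2$ to be positive. (Symmetrically for case (C).) Swapping the order of the pair does not help directly: it gives a bound on $c(\Dd(K_2,K_1))$, not on $c(\Dd(K_1,K_2))$, and only $i(\Dd)$ is invariant under the swap. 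Also, the inequality $c(BX)\ge c(X)$ you state is false in general; what is true (and what the proposition actually uses) is $c(XB)\ge c(X)$ and $k(BX)\ge k(X)$.

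The paper closes this gap by exploiting the contradiction hypothesis more sharply. In case (K), $i(\Dd)=1$ forces $k(\Dd)=1$ and $c(\Dd)=0$. The equality $k(B_{\imath(*)}^2X_{\imath(\bullet)}^2)=1$, together with nilpotency of $X_{\imath(\bullet)}^2$, forces $B_{\imath(*)}^2$ to be injective (otherwise $\Ker(B_{\imath(*)}^2)=\Ker(B_{\imath(*)}^2X_{\imath(\bullet)}^2)=\cdots=\Ker(0)$, contradicting $B_{\imath(*)}^2\neq 0$). The vanishing $c(\Dd)=0$ then forces the remaining $c_{\imath(\bullet)}^2,c_{\imath(\circ)}^2$ to vanish whenever $c_\bullet^1,c_\circ^1>0$, which happens unless $B_0^1$ is invertible. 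One then lands either in the situation where $K_2$ is itself full-rank of type (C) (giving $c(\Dd)>0$, contradiction), or $B_0^1$ is invertible and the configuration collapses to (S-1)/(S-2). This nilpotency step and the dichotomy on $B_0^1$ are the missing ideas in your sketch.
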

\begin{proof}
Suppose otherwise that $Y$ is a $L$-space. Thus $(K_1,K_2)$ is a special pair.
By Proposition~\ref{prop:all-special-pairs} and Lemma~\ref{lem:special-cases}
we may assume that $K_1$ is full-rank. In particular, one of the cases (K) 
 or (C) from 
Proposition~\ref{prop:splicing-with-full-rank}  will happen. Note that in case 
(K) the kernel of $\Dd$ is necessarily non-trivial by Lemma~\ref{lem:X-nilpotent}, while 
in case (C) the cokernel of $\Dd$ is non-trivial.\\
Let us assume that (K) is the case. Thus $c(\Dd)=0$ and 
$k(X_\bullet^1)=k(B_{\imath(*)}^2X_{\imath(\bullet)}^2)=1$. 
Note that 
$\Ker(B_{\imath(*)}^2)\subset 
\Ker(B_{\imath(*)}^2X_{\imath(\bullet)}^2)$, which implies that 
either $B_{\imath(*)}^2$ is injective or $\Ker(B_{\imath(*)}^2)=
\Ker(B_{\imath(*)}^2X_{\imath(\bullet)}^2)$. If the latter happens, we find
\begin{displaymath}
\Ker(B_{\imath(*)}^2)=
\Ker(B_{\imath(*)}^2X_{\imath(\bullet)}^2)=
\Ker(B_{\imath(*)}^2X_{\imath(\bullet)}^2X_{\imath(\bullet)}^2)=\dots =\Ker(0),
\end{displaymath}
since $X_{\imath(\bullet)}^2$ is nilpotent by Lemma~\ref{lem:X-nilpotent}. 
Since $B_{\imath(*)}^2\neq 0$ this can not happen and we conclude that 
$B_{\imath(*)}^2$  is injective. \\
Let us first assume that $B_0^1$ is not invertible. Then $c_\bullet^1,c_\circ^1\neq 0$.
Since $c_\bullet^1c_{\imath(\bullet)}^2=c_\circ^1c_{\imath(\circ)}^2=0$ 
we conclude that $B_{\imath(\circ)}^2$ and $B_{\imath(\bullet)}^2$ are both surjective.
Thus $K_2$ is full-rank and by part (C) of Proposition~\ref{prop:splicing-with-full-rank}
$c(\Dd)>0$. This contradiction implies that $B_0^1$ is invertible. Moreover, 
the argument implies that $0\in\{\circ,\bullet\}$ and at least one of 
$c_{\imath(\circ)}^2$ and $c_{\imath(\bullet)}^2$ is trivial. It is easy to conclude 
from here that we are then either in case (S-1) or case (S-2) of 
Proposition~\ref{prop:all-special-pairs}, which are both excluded by 
 Lemma~\ref{lem:special-cases}. The contradiction rules out case (K) of
  Proposition~\ref{prop:splicing-with-full-rank}. Excluding the case (C) is completely similar.
\end{proof}

\begin{cor}
If the homology sphere $Y$ contains an incompressible torus then 
$\rank(\ov{\mathrm{HF}}(Y,\Fbb)>1$.
\end{cor}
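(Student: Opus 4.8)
The plan is to deduce this corollary from Theorem~\ref{thm:main} by showing that an integral homology sphere containing an incompressible torus is, after normalization, exactly a splice of two non-trivial knot complements. First I would record the purely topological preliminaries. An incompressible torus $T\subset Y$ is embedded and, since $Y$ is orientable, two-sided; and it is separating, because $Y$ being an integral homology sphere gives $H_2(Y;\Fbb)=0$ (indeed $H_*(Y;\Fbb)\cong H_*(S^3;\Fbb)$), so $[T]=0$ and $Y\setminus T$ is disconnected. Write $Y=Y_1'\cup_T Y_2'$ with each $Y_i'$ compact with $\partial Y_i'=T$. A Mayer--Vietoris computation using $H_*(Y)\cong H_*(S^3)$ forces $H_1(Y_i';\Z)\cong\Z$ and $H_2(Y_i';\Z)=0$, so each $Y_i'$ is a homology solid torus; Dehn filling $Y_i'$ along the slope that is null-homologous in $H_1(Y_i';\Q)$ (which here is an integral slope) produces an integral homology sphere $Y_i$ together with a knot $K_i\subset Y_i$, the core of the filling solid torus, whose complement is $Y_i'$.

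Next I would normalize the gluing. Let $\mu_i,\lambda_i$ denote the meridian and the null-homologous longitude of $K_i$, regarded as curves on $T$. The same Mayer--Vietoris analysis shows that, up to sign, the gluing homeomorphism carries $\lambda_1$ to $\mu_2$ modulo some multiple of $\lambda_2$ (and symmetrically). The twisting can be absorbed: re-choosing the Dehn filling on the $Y_2'$ side still yields an integral homology sphere, hence re-identifies $Y_2'$ as the complement of a (possibly different) knot whose meridian is the twisted slope; after this adjustment, and then the analogous one on the $Y_1'$ side, the gluing matches $\mu_i$ with $\lambda_{3-i}$ exactly. Thus, after renaming the knots, $Y=Y(K_1,K_2)$ is the splice of the two knot complements in the sense of Theorem~\ref{thm:main}.

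Finally I would rule out triviality. A knot in a homology sphere is trivial precisely when its complement is a solid torus; and if, say, $Y_1'$ were a solid torus, then the kernel of $\pi_1(T)\to\pi_1(Y_1')\cong\Z$ (generated by the meridian of that solid torus) would be non-trivial and would inject into nothing better in $\pi_1(Y)$, i.e.\ $\pi_1(T)\to\pi_1(Y)$ would fail to be injective, contradicting incompressibility of $T$. Hence $K_1$ and $K_2$ are non-trivial knots in homology spheres, and Theorem~\ref{thm:main} gives $\mathrm{rk}\,\ov{\mathrm{HF}}(Y;\Fbb)>1$, which is the assertion (and recovers Theorem~\ref{thm:main-intro}).

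I expect the main obstacle to be the bookkeeping of the second step: carefully tracking how the meridian--longitude pair on $T$ transforms under the re-fillings, and checking that each re-filling indeed produces an integral homology sphere, so that the hypothesis ``non-trivial knot in a homology sphere'' of Theorem~\ref{thm:main} is genuinely satisfied. The remaining ingredients — separation of $T$, the homology solid torus structure of the two pieces, and the solid-torus-versus-incompressible dichotomy — are standard, and the homological input is the only place where ``homology sphere'' (rather than merely $\pi_1$-data) is used.
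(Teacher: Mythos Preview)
Your overall strategy --- decompose $Y$ along the torus, recognise the pieces as knot complements in homology spheres, check non-triviality via incompressibility, then invoke Theorem~\ref{thm:main} --- is exactly the paper's. But there is a genuine slip in your first step: filling $Y_i'$ along the slope that is null-homologous in $H_1(Y_i';\Q)$ does \emph{not} produce a homology sphere. If $\lambda_i$ generates the kernel of $H_1(T)\to H_1(Y_i')\cong\Z$, then Dehn filling along $\lambda_i$ gives a manifold with $H_1\cong H_1(Y_i')/\langle[\lambda_i]\rangle=\Z/0=\Z$. To obtain a homology sphere you must fill along a slope generating $H_1(Y_i')$, and as you implicitly notice, such a slope is only well-defined up to adding multiples of $\lambda_i$; this is what forces your subsequent ``normalization'' step.

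The paper avoids this detour entirely. Rather than filling each side along a slope determined by that side, it uses the null-homologous slope of the \emph{other} side: letting $\lambda$ be null-homologous in $U_1$ and $\mu$ null-homologous in $U_2$, Mayer--Vietoris for the homology sphere $Y$ forces $\lambda\cdot\mu=\pm 1$, so $(\lambda,\mu)$ is already a basis of $H_1(T)$. Filling $U_1$ along $\mu$ then yields a homology sphere $Y_1$ in which $K_1$ has meridian $\mu$ and Seifert longitude $\lambda$; symmetrically $K_2\subset Y_2$ has meridian $\lambda$ and longitude $\mu$. The original gluing therefore already matches meridian to longitude on the nose, and $Y=Y(K_1,K_2)$ with no twisting to absorb. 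Your approach can be repaired, but this crossing of the two null-homologous slopes is the clean way to do it.
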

\begin{proof}
If $Y$ contains an incompressible torus $T$, $T$ will be separating and there will be a pair of
curves $\lambda$ and $\mu$ on $T$ such that $\lambda$ is homologically 
trivial on one side of $T$ and $\mu$ is homologically trivial on the other 
side of $T$. Since $Y$ is a homology sphere, the intersection number of $\mu$ 
and $\lambda$ is one. Let $U_1$ and $U_2$ be the two components of
$Y-T$ and let $U_1$ be the component containing a surface which bounds 
$\lambda$. Capping off $\mu\subset T=\partial U_1$ by a disk and then 
gluing a three-ball gives a three-manifold $Y_1$.
The simple closed curve $\lambda$ represents a knot $K_1\subset Y_1$. 
Similarly capping off $\lambda\subset T=\partial U_2$ by a disk and then 
gluing a three-ball gives a three-manifold $Y_2$ and $\mu$ 
represents a knot $K_2\subset Y_2$. Both $Y_1$ and $Y_2$ are homology
spheres and $Y$ is obtained by splicing $K_1$ and $K_2$. Since $T$ is 
incompressible, both $K_1$ and $K_2$ are non-trivial and 
Theorem~\ref{thm:main} completes the proof of this corollary.
\end{proof}

%\newpage
\subsection{Applications}\label{subsec:applications}
We may use the relation between Khovanov
homology of a knot inside the standard sphere and the Heegaard
Floer homology of its branched double-cover, discovered by
Ozsv\'ath and Szab\'o \cite{OS-branched}, to show the non-triviality of
Khovanov homology for certain classes of knots. We emphasize again that 
the results presented here are all special cases of the the theorem of 
Kronheimer and Mrowka \cite{KM} that Khovanov homology is an unknot 
detector.

\begin{defn}
A prime knot $K\subset S^3$ is an $n${\emph{-string composite}} if there
is an embedded $2$-sphere intersecting the knot transversely which separates $(S^3,K)$
into prime $n$-string tangles. A $2$-string composite knot is called a 
\emph{doubly composite knot}.
\end{defn}
We refer the reader to \cite{Bleiler} for more on doubly composite and 
doubly prime knots, and only quote the following lemma from that paper:
\begin{lem}\label{lem:doubly-composite}
A prime knot $K\subset S^3$ is a doubly composite knot if and only if the double cover
$\Sig(K)$ of $S^3$ branched over the knot $K$ contains an incompressible torus $T$
which is invariant under the non-trivial covering translation and meets the fixed point set
of this map precisely in $4$ points, and separates $\Sig(K)$ into 
irreducible boundary irreducible pieces.
\end{lem}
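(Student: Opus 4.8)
The statement to prove is Lemma~\ref{lem:doubly-composite}, quoted from Bleiler's paper, so the ``proof'' should really be a pointer to the literature together with a sketch of why it is true. The plan is as follows.

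\medskip

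\noindent\textbf{Approach.} Since this lemma is quoted verbatim from \cite{Bleiler}, I would not reprove it from scratch; instead I would indicate the reference and sketch the dictionary between the two-string tangle decomposition of $(S^3,K)$ and the equivariant incompressible torus in the branched double cover $\Sig(K)$. The key observation is that the branched double cover functor turns a decomposing $2$-sphere meeting $K$ in $4$ points into a torus: the preimage of a $2$-sphere meeting the branch locus $K$ in $n$ points is a closed surface of genus $(n-2)/2$ provided the restriction of the covering to the $2$-sphere is connected, so $n=4$ yields genus $1$, a torus $T$. This torus is visibly invariant under the covering involution $\sigma$ and meets the fixed-point set (the preimage of $K$) in exactly $4$ points.

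\medskip

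\noindent\textbf{Key steps.} First, recall the correspondence: a prime knot $K\subset S^3$ is a doubly composite (i.e.\ $2$-string composite) knot precisely when there is an embedded $2$-sphere $F$ transverse to $K$ with $|F\cap K|=4$ splitting $(S^3,K)$ into two prime $2$-string tangles $(B_i, t_i)$, $i=1,2$. Second, pass to branched double covers: $\Sig(K)$ is built from the branched double covers $\Sig(B_i,t_i)$ of the two tangles, glued along $T=\Sigma(F, F\cap K)$, which is a torus because $F$ meets the branch set in $4$ points and the double cover of $F$ branched at $4$ points is a torus. By construction $T$ is $\sigma$-invariant and $T\cap\mathrm{Fix}(\sigma)$ is the $4$-point preimage of $F\cap K$. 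Third, translate the tangle conditions into the three-manifold conditions: the tangle $(B_i,t_i)$ is prime if and only if $\Sigma(B_i,t_i)$ is irreducible and has incompressible boundary (this is the Montesinos-type correspondence between prime tangles and irreducible, boundary-irreducible double covers; see \cite{Bleiler}), and $T$ is incompressible in $\Sig(K)$ exactly when neither side admits a compressing disk, i.e.\ exactly when both $\Sigma(B_i, t_i)$ are boundary-irreducible. Fourth, for the converse, start with a $\sigma$-invariant incompressible torus $T$ meeting $\mathrm{Fix}(\sigma)$ in $4$ points with irreducible, boundary-irreducible complementary pieces; the quotient $T/\sigma$ is a $2$-sphere in $S^3$ meeting $K$ transversely in $4$ points, and the quotients of the two complementary pieces are $2$-string tangles which are prime because their branched double covers are irreducible and boundary-irreducible. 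Primality of $K$ and the fact that $T$ (not a sphere) is incompressible prevent the decomposition from being trivial (a split or $\partial$-parallel tangle).

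\medskip

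\noindent\textbf{Main obstacle.} The delicate point is the equivalence ``prime $2$-string tangle'' $\Longleftrightarrow$ ``irreducible, boundary-irreducible branched double cover,'' together with the care needed to ensure the torus one produces is genuinely incompressible rather than compressible or boundary-parallel on one side; this is precisely where one must invoke the analysis in \cite{Bleiler} rather than reconstruct it. Accordingly I would present the proof as: this is \cite[the relevant lemma]{Bleiler}; the argument proceeds by taking branched double covers of the decomposing $2$-sphere and of the two prime tangles, using the standard correspondence between primality of a tangle and irreducibility/boundary-irreducibility of its branched double cover, and conversely by passing to the quotient of a $\sigma$-invariant incompressible torus. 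I would keep the written proof short, citing \cite{Bleiler} for the technical heart and only spelling out the genus computation $n=4\Rightarrow$ torus and the equivariance, which are the parts a reader can check immediately.
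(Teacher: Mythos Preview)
Your proposal is correct and matches the paper's approach: the paper does not prove this lemma at all but simply quotes it from \cite{Bleiler}, stating explicitly that it is taken from that reference. Your additional sketch of the branched-cover dictionary (sphere meeting $K$ in $4$ points $\leftrightarrow$ $\sigma$-invariant torus, prime tangle $\leftrightarrow$ irreducible boundary-irreducible double cover) is accurate and goes beyond what the paper itself provides, but is not needed since the paper treats this purely as a citation.
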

\begin{cor}\label{cor:composite}
If the prime knot $K\subset S^3$ is doubly composite, 
the rank of its reduced Khovanov homology group
$\widetilde{\mathrm{Kh}}(K)$ is bigger than $1$.
\end{cor}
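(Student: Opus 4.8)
The plan is to translate the statement about Khovanov homology into one about the Heegaard Floer homology of the branched double cover, and then feed in the results of Subsection~\ref{subsec:main-thm}. Let $\Sigma(K)$ denote the double cover of $S^3$ branched over $K$. By the spectral sequence of Ozsv\'ath and Szab\'o \cite{OS-branched}, whose $E_2$-term is the reduced Khovanov homology of the mirror of $K$ with coefficients in $\Fbb$ and whose $E_\infty$-term is $\widehat{\mathrm{HF}}(\Sigma(K);\Fbb)$, together with the fact that the $\Fbb$-rank of reduced Khovanov homology is preserved under mirroring, we obtain
\[
\rk_\Fbb\widetilde{\mathrm{Kh}}(K)\ \geq\ \rk_\Fbb\widehat{\mathrm{HF}}(\Sigma(K);\Fbb).
\]
Hence it suffices to prove $\rk_\Fbb\widehat{\mathrm{HF}}(\Sigma(K);\Fbb)>1$, and I would do this by splitting into cases according to the order $\det(K)=|H_1(\Sigma(K);\Z)|$, which is always odd, hence equal to $1$ or at least $3$.

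In the case $\det(K)\geq 3$ there is nothing further to do: for any rational homology sphere the rank of $\widehat{\mathrm{HF}}$ is bounded below by the order of its first homology, so $\rk_\Fbb\widehat{\mathrm{HF}}(\Sigma(K);\Fbb)\geq\det(K)\geq 3$.

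In the remaining case $\det(K)=1$ the manifold $\Sigma(K)$ is an integer homology sphere. Since $K$ is a prime doubly composite knot, Lemma~\ref{lem:doubly-composite} provides an incompressible torus $T\subset\Sigma(K)$; an incompressible torus in an integer homology sphere is automatically separating, since a non-separating one would represent a nonzero class in $H_2$. Thus $\Sigma(K)$ is an integer homology sphere containing an incompressible torus, and Theorem~\ref{thm:main-intro} (equivalently, the last Corollary of Subsection~\ref{subsec:main-thm}) yields $\widehat{\mathrm{HF}}(\Sigma(K);\Fbb)\neq\Fbb$, i.e.\ $\rk_\Fbb\widehat{\mathrm{HF}}(\Sigma(K);\Fbb)>1$. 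Combining the two cases with the displayed inequality completes the proof.

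The argument uses no genuinely new input beyond the cited results, so there is no substantial obstacle; the points that deserve a careful sentence are that $\det(K)$ is odd (making the dichotomy exhaustive), that the general bound $\rk_\Fbb\widehat{\mathrm{HF}}\geq|H_1|$ disposes of $\det(K)\geq 3$ at once, and that when $\det(K)=1$ the torus furnished by Lemma~\ref{lem:doubly-composite} genuinely places $\Sigma(K)$ in the situation covered by Theorem~\ref{thm:main-intro}. The mirror bookkeeping in the Ozsv\'ath--Szab\'o spectral sequence is harmless, since both $\rk_\Fbb\widetilde{\mathrm{Kh}}(-)$ and $\rk_\Fbb\widehat{\mathrm{HF}}(\Sigma(-))$ are invariant under taking mirrors.
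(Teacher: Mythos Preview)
Your proof is correct and follows essentially the same route as the paper: use Lemma~\ref{lem:doubly-composite} to produce an incompressible torus in $\Sigma(K)$, apply the main theorem to bound $\rk_\Fbb\widehat{\mathrm{HF}}(\Sigma(K);\Fbb)$ from below, and then invoke the Ozsv\'ath--Szab\'o spectral sequence. The one difference is that you explicitly split on $\det(K)$: since Theorem~\ref{thm:main-intro} is stated only for integer homology spheres, your case $\det(K)\geq 3$ (handled by the standard bound $\rk_\Fbb\widehat{\mathrm{HF}}\geq |H_1|$) fills a gap the paper's proof passes over silently. This extra sentence makes your argument strictly more careful than the paper's, at no real cost.
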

\begin{proof}
If $K$ is doubly composite, by Lemma~\ref{lem:doubly-composite} there exists 
an incompressible torus $T$ inside the three-manifold $\Sig(K)$. 
Thus the rank of $\ov{\mathrm{HF}}(\Sig(K),\Fbb)$ is bigger than $1$.
By the main theorem of \cite{OS-branched} there is a spectral sequence 
whose $E^2$-term consists of Khovanov's reduced homology $\widetilde{\mathrm{Kh}}(K)$
of the mirror of $K$ with coefficients in $\Fbb$ which converges to
 $\ov{\mathrm{HF}}(\Sig(K),\Fbb)$, and is of rank greater than $1$ by 
 Theorem~\ref{thm:main}. Thus the rank of 
 $\widetilde{\mathrm{Kh}}(K)$ is bigger than $1$  as well.
\end{proof}
Furthermore, if $K$ is a prime satellite knot, we will have an incompressible torus in the
complement of $K$. This torus gives an incompressible torus in the double cover $\Sig(K)$
of $S^3$ branched  over the knot $K$. Thus, Heegaard Floer homology of 
$\Sig(K)$ will be non-trivial. We thus have the following corollary:
\begin{cor}
If $K\subset S^3$ is a prime satellite knot
the rank of its reduced Khovanov homology group $\widetilde{\mathrm{Kh}}(K)$
 is greater than $1$.
\end{cor}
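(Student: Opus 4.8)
The plan is to mimic the proof of Corollary~\ref{cor:composite}: produce an incompressible torus in the double cover $\Sig(K)$ of $S^3$ branched along $K$, bound $\rank\,\ov{\mathrm{HF}}(\Sig(K);\Fbb)$ from below, and then feed this into the spectral sequence of \cite{OS-branched}. First I would recall that a satellite knot is, by definition, one whose exterior $S^3-\mathrm{nd}(K)$ contains an essential torus $T$ (incompressible and not boundary-parallel), which cuts the exterior into the exterior of a nontrivial companion knot $C$ and the complement of the pattern of $K$ inside the companion solid torus $V$; one may take $T$ to be a torus in the JSJ decomposition of $S^3-\mathrm{nd}(K)$.

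Next I would pass to $\Sig(K)$. Since $T$ is disjoint from the branch locus $K$, its preimage $\widetilde T\subset\Sig(K)$ is a single torus or a pair of parallel tori, according to the parity of the winding number of $K$ in $V$. The components of $\Sig(K)$ cut along $\widetilde T$ are the double cover of $V$ branched over $K$ and a finite unbranched cover of $S^3-\mathrm{nd}(C)$: the latter is irreducible with incompressible boundary because $C$ is nontrivial and these properties are inherited by finite covers, while the former is irreducible with incompressible boundary because $K$ is prime (so the pattern carries no local knot and is geometrically essential). Hence $\widetilde T$ does not compress into either side and is incompressible in the closed manifold $\Sig(K)$. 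This lifting step — verifying that the essential companion torus survives as an incompressible torus upstairs, i.e. controlling irreducibility of the two covering pieces — is the one I expect to require the most care; everything else is bookkeeping.

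It then remains to conclude $\rank\,\ov{\mathrm{HF}}(\Sig(K);\Fbb)>1$, which I would split into two cases. If $\Sig(K)$ is an integral homology sphere, then (exactly as in the proof of the corollary following Theorem~\ref{thm:main}) the incompressible torus $\widetilde T$ is automatically separating and realizes $\Sig(K)$ as a splice $Y(K_1,K_2)$ of nontrivial knots in homology spheres, so $\rank\,\ov{\mathrm{HF}}(\Sig(K);\Fbb)>1$ by Theorem~\ref{thm:main}. If $\Sig(K)$ is not a homology sphere, then $|\Ht_1(\Sig(K);\Z)|$ is odd and at least $3$, and $\rank\,\ov{\mathrm{HF}}(\Sig(K);\Fbb)\ge|\Ht_1(\Sig(K);\Z)|\ge 3$ from $\chi\big(\ov{\mathrm{HF}}(\Sig(K);\Fbb)\big)=\pm|\Ht_1(\Sig(K);\Z)|$. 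Finally, the spectral sequence of \cite{OS-branched} has $E^2$-page the reduced Khovanov homology over $\Fbb$ of the mirror of $K$ and converges to $\ov{\mathrm{HF}}(\Sig(K);\Fbb)$, so $\rank\,\widetilde{\mathrm{Kh}}(K)=\rank\,\widetilde{\mathrm{Kh}}(\overline{K})\ge\rank\,\ov{\mathrm{HF}}(\Sig(K);\Fbb)>1$, which is the assertion.
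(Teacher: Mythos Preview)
Your approach is correct and essentially the same as the paper's, which simply asserts that the companion torus lifts to an incompressible torus in $\Sig(K)$ and then appeals to the main theorem together with the spectral sequence of \cite{OS-branched}. You are in fact more careful than the paper: it neither justifies the incompressibility of the lift nor handles the case where $\Sig(K)$ fails to be a homology sphere.
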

In fact, we may prove a slightly more general statement:
\begin{prop}\label{prop:pi-hyperbolic}
If the rank of the reduced Khovanov homology $\widetilde{\mathrm{Kh}}(K)$ 
of a non-trivial knot $K\subset S^3$ is one, the double  cover $\Sig(K)$ 
of $S^3$, branched over the knot $K$, is hyperbolic.
\end{prop}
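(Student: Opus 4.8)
The plan is to combine the branched-double-cover relation of Ozsv\'ath and Szab\'o with the geometrization theorem, exactly as in the proof of Corollary~\ref{cor:composite}, but now using the topological classification of irreducible three-manifolds rather than quoting an explicit topological input. Concretely, suppose $K\subset S^3$ is a non-trivial knot with $\rk\widetilde{\mathrm{Kh}}(K)=1$. By the spectral sequence of \cite{OS-branched} whose $E^2$-page is $\widetilde{\mathrm{Kh}}(\ovl K;\Fbb)$ and which converges to $\ov{\mathrm{HF}}(\Sig(K);\Fbb)$, the hypothesis forces $\rk\ov{\mathrm{HF}}(\Sig(K);\Fbb)=1$, i.e. $\Sig(K)$ is a homology sphere $L$-space with the Heegaard Floer homology of $S^3$. (It is a homology sphere because $K$ is a knot in $S^3$; one should note this explicitly.)

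Next I would feed this into the structural results already established. First, since $K$ is non-trivial, $\Sig(K)\ne S^3$ (the branched double cover of the unknot is $S^3$, and by the Smith conjecture — or by the positive solution of the knot-complement and orbifold problems — no other knot has $S^3$ as branched double cover). Second, by the Corollary following Theorem~\ref{thm:main} in Section~\ref{sec:incomp-torus}, a homology sphere with $\rk\ov{\mathrm{HF}}=1$ contains no incompressible torus; hence $\Sig(K)$ is atoroidal. It remains to rule out the Seifert-fibered case: as recalled in the introduction (see \cite{Raif}, \cite{Ef-Seifert}), the only Seifert fibered homology spheres with trivial Heegaard Floer homology are $S^3$ and the Poincar\'e sphere $\Sig(2,3,5)$; since $\Sig(K)\ne S^3$, the only Seifert-fibered possibility is $\Sig(2,3,5)$, which is the branched double cover of the torus knot $T(3,5)$ — but $T(3,5)$ has $\rk\widetilde{\mathrm{Kh}}>1$ (its reduced Khovanov homology is well known to have rank $7$), contradicting the hypothesis. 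Therefore $\Sig(K)$ is an irreducible (indeed it is irreducible since it is a homology sphere $L$-space that is not $S^1\times S^2$, and a reducible homology sphere would have an $S^2\times S^1$ summand hence infinite $\ov{\mathrm{HF}}$ — or simply: a non-trivial connected-sum decomposition would make $\ov{\mathrm{HF}}$ have rank $>1$), atoroidal, non-Seifert-fibered homology sphere.

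Finally I would invoke geometrization (Perelman, \cite{Per}, \cite{Morgan-Tian1, Morgan-Tian2}, building on \cite{Thurston}): an irreducible, atoroidal, non-Seifert-fibered closed orientable three-manifold is hyperbolic. Hence $\Sig(K)$ is hyperbolic, which is the assertion. I expect the main obstacle to be the bookkeeping at the two ``degenerate'' ends of the argument: ensuring that $\Sig(K)$ really is irreducible (ruling out reducible branched double covers, which is where one needs either the equivariant sphere theorem or the observation that a reducible homology sphere has $\ov{\mathrm{HF}}$ of rank exceeding one), and cleanly excluding $\Sig(2,3,5)$ by identifying it as $\Sigma(T(3,5))$ and quoting the rank of $\widetilde{\mathrm{Kh}}(T(3,5))$. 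The Floer-theoretic heart — that trivial reduced Khovanov homology forces trivial $\ov{\mathrm{HF}}$ of the branched double cover, and that the latter forces atoroidality — is immediate from \cite{OS-branched} and the corollary to Theorem~\ref{thm:main}, so no new analysis is needed there.
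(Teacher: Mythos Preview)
Your strategy is sound and represents a genuinely different route from the paper's. The paper invokes Thurston's \emph{orbifold} geometrization theorem (\cite{BoP}, \cite{CHK}), using the $\Z/2$-involution on $\Sig(K)$ coming from the branched covering to conclude directly that $\Sig(K)$ carries a geometric structure compatible with that involution; the three cases (spherical, Seifert, hyperbolic) are then disposed of much as you do. Your route instead feeds $\Sig(K)$ into Perelman's geometrization for bare $3$-manifolds, using the paper's own corollary to Theorem~\ref{thm:main} to secure atoroidality. The paper's approach yields a bonus you do not get: in the hyperbolic case the structure is invariant under the deck transformation, i.e.\ $K$ is $\pi$-hyperbolic, which the paper remarks on immediately after the proof. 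For the proposition as stated, however, your weaker conclusion suffices.

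Two steps need repair. First, ``$\Sig(K)$ is a homology sphere because $K$ is a knot in $S^3$'' is false: one has $H_1(\Sig(K))\cong\Z/|\det K|$, which is only trivial when $\det K=\pm 1$. The correct justification is that $\rk\,\ov\HFT(\Sig(K);\Fbb)\geq|H_1(\Sig(K))|$ for any rational homology sphere, so rank one forces $|H_1|=1$. Second, your irreducibility argument fails: if $Y_1,Y_2$ are integer homology sphere $L$-spaces then so is $Y_1\# Y_2$, by the K\"unneth formula $\ov\HFT(Y_1\# Y_2)\cong\ov\HFT(Y_1)\otimes\ov\HFT(Y_2)$; for instance $\Sig(2,3,5)\#\Sig(2,3,5)$ has $\rk\,\ov\HFT=1$. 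Neither of your two proposed justifications (an $S^2\times S^1$ summand, or rank exceeding one) is correct. The clean fix is to note that $\Sig(K)$ is irreducible whenever $K$ is prime (a classical fact), and to reduce to the prime case via the K\"unneth formula for reduced Khovanov homology; the paper's orbifold approach sidesteps this by working with the orbifold $(S^3,K)$ directly, though it too is tacit about the composite case.
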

\begin{proof}
Note that if a knot $K$ is doubly composite Corollary~\ref{cor:composite} implied 
that the rank of  $\widetilde{\mathrm{Kh}}(K)$ is bigger than $1$. 
Thus, $K$ has to be doubly prime.  By Thurston's orbifold geometrization 
theorem (see \cite{BoP} and \cite{CHK}) the branched double cover $\Sig(K)$ 
is a geometric manifold and there are three possible cases.\\
 1- $\Sig(K)$ is a Lens space and thus admits a spherical structure.
  If $\ov{\mathrm{HF}}(\Sig(K))$  is one dimensional, $\Sig(K)$ is forced to
   be the standard sphere and $K$ is trivial. Thus in this case,
 the rank of  $\widetilde{\mathrm{Kh}}(K)$ is bigger than $1$ only if $K$ is trivial. \\
 2- $\Sig(K)$ admits a Seifert fibration and $K$ is a Montesinos knot with at most 
 three rational tangles.
 If $\Sig(K)$ is not a homology sphere, $\widetilde{\mathrm{Kh}}(K)$
 is clearly different from $\Fbb$, and if it is a homology sphere which 
 admits a Seifert fibration and $\ov{\mathrm{HF}}(\Sig(K))=\Fbb$, we 
 know (see \cite{Raif} or \cite{Ef-Seifert}) that $\Sig(K)$ is either
 the standard sphere, or the Poincar\'e sphere.  Moreover, for $\Sig(K)$ to be the
 Poincar\'e sphere we should have $K=T(3,5)$, i.e. $K$ is the $(3,5)$-torus 
 knot, or equivalently
 $(-2,3,5)$-pretzel knot, which is $10_{124}$ in Rolfsen's 
 table (see \cite{Matt-Kh} and \cite{Rolf}).
  $\widetilde{\mathrm{Kh}}(T(3,5))$ has rank $7$ by direct computation \cite{Ko}.\\
 3- $\Sig(K)$ admits a hyperbolic structure which is invariant under the
 deck transformation.\\
 Having ruled out the first two possibilities, the proof  is complete.
\end{proof}
The knots $K$ with the property that $\Sig(K)$ admits a 
hyperbolic structure which is invariant under the involution of $\Sig(K)$ are 
called $\pi$-\emph{hyperbolic}. The hyperbolic structure comes
from a hyperbolic structure on $S^3-K$ which becomes a singular 
folding with angle $\pi$ around $K$.
Thus in particular, $\pi$-hyperbolic knots are hyperbolic.
\begin{cor}
Assuming Conjecture~\ref{conj},  
if the reduced Khovanov homology $\widetilde{\mathrm{Kh}}(K)$ for 
a knot $K\subset S^3$ is equal to $\Fbb$, $K$ is the unknot.
\end{cor}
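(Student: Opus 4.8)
The plan is a proof by contradiction. Suppose $K\subset S^3$ is non-trivial and $\widetilde{\mathrm{Kh}}(K;\Fbb)=\Fbb$; I will exhibit $\Sig(K)$ as a hyperbolic integer homology sphere with $\ov{\mathrm{HF}}(\Sig(K);\Fbb)=\Fbb$, which directly contradicts Conjecture~\ref{conj}. The hyperbolicity costs nothing new: a non-trivial knot whose reduced Khovanov homology has rank one satisfies the hypotheses of Proposition~\ref{prop:pi-hyperbolic}, so $\Sig(K)$ is hyperbolic. It remains to compute the Heegaard Floer homology of $\Sig(K)$.

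For that I would invoke the Ozsv\'ath--Szab\'o branched double cover spectral sequence of \cite{OS-branched}, exactly as in the proof of Corollary~\ref{cor:composite}: its $E^2$-page is the reduced Khovanov homology of the mirror of $K$ with $\Fbb$-coefficients, and it converges to $\ov{\mathrm{HF}}(\Sig(K);\Fbb)$. The reduced Khovanov homology of a knot and of its mirror have the same total dimension over a field, so the $E^2$-page has dimension $\dim_\Fbb\widetilde{\mathrm{Kh}}(K;\Fbb)=1$, whence $\rank\,\ov{\mathrm{HF}}(\Sig(K);\Fbb)\leq 1$. On the other hand $\Sig(K)$ is a rational homology sphere (with $|H_1(\Sig(K);\Z)|=\det K$), so $\rank\,\ov{\mathrm{HF}}(\Sig(K);\Fbb)\geq |H_1(\Sig(K);\Z)|\geq 1$. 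Hence this rank is exactly $1$, which forces $|H_1(\Sig(K);\Z)|=1$: that is, $\Sig(K)$ is an integer homology sphere with $\ov{\mathrm{HF}}(\Sig(K);\Fbb)=\Fbb$. (If one prefers to avoid the spectral sequence for the homology-sphere claim, note instead that $\widetilde{\mathrm{Kh}}(K;\Q)$ is non-zero, since its graded Euler characteristic, the Jones polynomial, evaluates to $1$ at $q=1$, and has dimension at most $\dim_\Fbb\widetilde{\mathrm{Kh}}(K;\Fbb)=1$ by the universal coefficient theorem, hence is one-dimensional, so the Jones polynomial is a monomial and $\det K=1$.)

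Combining the two parts, $\Sig(K)$ is at once hyperbolic and a homology sphere with trivial Heegaard Floer homology, which is precisely what Conjecture~\ref{conj} forbids. This contradiction forces $K$ to be the unknot. There is no genuinely delicate step here beyond assembling the pieces; the only minor care needed is with the orientation and quantum grading conventions relating $\widetilde{\mathrm{Kh}}$ of $K$ to $\widetilde{\mathrm{Kh}}$ of its mirror and to $\ov{\mathrm{HF}}(\Sig(K))$, together with the standard fact that a spectral sequence can only drop total rank. The substantive work has already been carried out, in the splicing rank formula of Proposition~\ref{prop:splicing-formula}, the case analysis leading to Theorem~\ref{thm:main}, and Proposition~\ref{prop:pi-hyperbolic}.
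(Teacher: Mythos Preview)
Your proof is correct and follows essentially the same route as the paper's: contradiction via Proposition~\ref{prop:pi-hyperbolic} for hyperbolicity, the Ozsv\'ath--Szab\'o spectral sequence for the rank bound, and Conjecture~\ref{conj} for the punchline. You are in fact slightly more careful than the paper, which invokes Conjecture~\ref{conj} without explicitly checking that $\Sig(K)$ is a homology sphere; your argument that $\rank\,\ov{\mathrm{HF}}(\Sig(K);\Fbb)\leq 1$ forces $|H_1(\Sig(K);\Z)|=1$ fills that small gap.
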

\begin{proof}
Suppose $K$ is not the unknot.
By Proposition~\ref{prop:pi-hyperbolic}, if $\widetilde{\mathrm{Kh}}(K)=\Fbb$,  
the branched double
cover $\Sig(K)$ is  hyperbolic. Conjecture~\ref{conj} then implies that
 $\ov{\mathrm{HF}}(\Sig(K))$ is non-trivial, and by the correspondence of 
 \cite{OS-branched},
$$1=\rank\big(\widetilde{\mathrm{Kh}}(K)\big)\geq \rank(\ov{\mathrm{HF}}(\Sig(K)))>1.$$
This contradiction implies that  $\widetilde{\mathrm{Kh}}(K)\neq \Fbb$.
\end{proof}
% ----------------------------------------------------------------

\end{document}